%% file: main.tex
\documentclass[educ,nonblindrev]{informs-educ}
\usepackage{eqndefns-left}
\RequirePackage{bm}
\RequirePackage{endnotes}

\OneAndAHalfSpacedXI

\usepackage[numbers,sort&compress]{natbib}
 \bibpunct[, ]{[}{]}{,}{n}{}{,}%
 \def\bibfont{\small}%

\EquationsNumberedThrough
\TheoremsNumberedThrough
\ECRepeatTheorems

\input{tex/macros}

\MANUSCRIPTNO{}

\begin{document}

 \RUNAUTHOR{Niazadeh and Udwani}
%\RUNAUTHOR{}
\RUNTITLE{Primal-Dual for Prior-Free Online Resource Allocation}
\TITLE{Modern Primal-Dual Frameworks for Prior-Free Online Resource Allocation}

\ARTICLEAUTHORS{%
\AUTHOR{Rad Niazadeh}
\AFF{University of Chicago Booth School of Business, Chicago, IL,
\EMAIL{rad.niazadeh@chicagobooth.edu}}
\AUTHOR{Rajan Udwani}
\AFF{Department of Industrial Engineering \& Operations Research, University of California, Berkeley,
\EMAIL{rudwani@berkeley.edu}}
}

\ABSTRACT{%
	Linear-programming (LP)-based primal--dual methods are fundamental for designing and analyzing algorithms in adversarial (prior-free) online resource allocation. This chapter provides a tutorial on two modern primal-dual frameworks, emphasizing recent developments and contemporary models in operations research. Part~I develops an LP-based convex-programming framework where solving a regularized convex program at each arrival captures the tradeoff between greediness and hedging, yielding a dual certificate via Karush--Kuhn--Tucker (KKT) conditions. Because standard LP relaxations can be weak or intractable for stochastic outcomes, Part~II introduces a complementary LP-free framework that provides a universal certificate system for evaluating competitive ratios under such uncertainty. Covering a wide array of models---including online vertex-weighted bipartite matching, edge-weighted online matching with free disposal, online matching with stochastic rewards, reusable resources, two-sided assortment optimization, configuration allocation (whole-page optimization), AdWords, and costly cancellations---the tutorial equips readers with versatile proof templates to analyze existing algorithms and develop new solutions for emerging applications.

}
\KEYWORDS{online resource allocation; primal--dual analysis; convex programming; LP-free certificate; free disposal; stochastic outcomes}
\maketitle

\input{tex/introduction}

\input{tex/models}

\section{Part I: Convex-programming-based Online Allocations and Analysis}
\label{sec:part-I}
\input{tex/recipe}

\input{tex/matching}
\input{tex/free_disposal}

\section{Part II: LP-free Framework for Stochastic Outcomes and Beyond}
\label{sec:part-II}
\input{tex/part2}

\section{Open Problems and Discussion}\label{sec:open}
\input{tex/open}

\bibliographystyle{informs2014}

\bibliography{refs}

% ============================================================
% Electronic Companion
% ============================================================

\renewcommand{\theHchapter}{A\arabic{chapter}}
\renewcommand{\theHsection}{A\arabic{section}}

\newpage
\clearpage
%\SingleSpaced
\normalsize
\pagestyle{ECheadings}%
\ECHowEquations
\ECHowSections
\setcounter{figure}{0}%
\renewcommand\thefigure{EC.\@arabic\c@figure}%
\setcounter{table}{0}%
\renewcommand\thetable{EC.\@arabic\c@table}%
\setcounter{page}{1}\def\thepage{ec\arabic{page}}%

\ECDisclaimer

\input{tex/ec_intro}
\input{tex/batch_arrival}
\input{tex/configuration}
\input{tex/extensions}

\input{tex/additional_special_cases}

\input{tex/ec}

\end{document}

%% file: tex/macros.tex
\usepackage[T1]{fontenc}
\usepackage[utf8]{inputenc}
\usepackage{microtype}
\usepackage{amsmath,amssymb,mathtools}
\usepackage{bm}
\usepackage{booktabs}
\usepackage{tabularx}
\usepackage{enumitem}
\usepackage{xcolor}

\definecolor{cornellred}{rgb}{0.7, 0.11, 0.11}
\definecolor{maroon}{rgb}{0.52, 0, 0}
\definecolor{dgreen}{rgb}{0.0, 0.5, 0.0}
\definecolor{ballblue}{rgb}{0.13, 0.67, 0.8}
\definecolor{royalblue(web)}{rgb}{0.25, 0.41, 0.88}
\definecolor{bleudefrance}{rgb}{0.19, 0.55, 0.91}
\definecolor{royalazure}{rgb}{0.0, 0.22, 0.66}
\usepackage{hyperref}
\hypersetup{
	colorlinks = true,
	linkcolor=maroon,
	urlcolor=royalazure,
	citecolor=royalazure,
	linkbordercolor = {maroon}
}
\definecolor{optblue}{RGB}{50,100,200}
\definecolor{algred}{RGB}{200,60,60}
\definecolor{alggreen}{RGB}{40,160,70}
\definecolor{medgray}{RGB}{160,160,160}

\usepackage[capitalise,noabbrev]{cleveref}
\usepackage{tikz}
\usepackage{pgfplots}
\usepackage[most]{tcolorbox}
\usepackage{multirow}
\usetikzlibrary{arrows.meta,calc,decorations.pathreplacing,fit,patterns,positioning,shapes.misc}
\pgfplotsset{compat=1.18}
\allowdisplaybreaks

\AtBeginDocument{%
\abovedisplayskip=3pt plus 2pt minus 2pt
\belowdisplayskip=3pt plus 2pt minus 2pt
\abovedisplayshortskip=3pt plus 1pt
\belowdisplayshortskip=3pt plus 1pt minus 1pt
}

\definecolor{tutorialblue}{RGB}{30,84,156}
\definecolor{tutorialred}{RGB}{177,45,45}
\definecolor{tutorialgreen}{RGB}{48,118,82}
\definecolor{tutorialgray}{RGB}{90,90,90}
\definecolor{tutoriallight}{RGB}{248,248,248}

\makeatletter
\@ifundefined{argmax}{}{}
\@ifundefined{argmin}{}{}
\makeatother

\providecommand{\dd}{\,\mathrm{d}}
\providecommand{\Expo}{\mathrm{e}}
\providecommand{\ind}[1]{\mathbf{1}\!\left\{#1\right\}}

\providecommand{\RR}{\mathbb{R}}
\providecommand{\NN}{\mathbb{N}}
\providecommand{\EE}{\mathbb{E}}

\providecommand{\Graph}{G}
\providecommand{\OnlineVertices}{U}
\providecommand{\OfflineVertices}{V}
\providecommand{\EdgeSet}{E}
\providecommand{\Neighbor}[1]{N(#1)}

\providecommand{\ArrivalIdx}{t}

\providecommand{\OfflineIdx}{j}

\providecommand{\RewardVar}{r}
\providecommand{\GammaFunc}[1]{\Gamma\!\left(#1\right)}
\providecommand{\ALG}{\mathrm{ALG}}
\providecommand{\OPT}{\mathrm{OPT}}
\providecommand{\CR}{\mathrm{CR}}

\providecommand{\MatchVar}[2]{x_{#1#2}}

\providecommand{\LoadTime}[2]{y_{#1}^{(#2)}}
\providecommand{\VertexWeight}[1]{r_{#1}}

\providecommand{\RegFun}{F}
\providecommand{\RegDer}{f}
\providecommand{\RegVal}[1]{\RegFun\!\left(#1\right)}
\providecommand{\RegSlope}[1]{\RegDer\!\left(#1\right)}

\providecommand{\CompRatio}{\Gamma}

\providecommand{\DualOn}[1]{\alpha_{#1}}
\providecommand{\DualOff}[1]{\beta_{#1}}
\providecommand{\DualOnHat}[1]{\widehat{\alpha}_{#1}}
\providecommand{\DualOffHat}[1]{\widehat{\beta}_{#1}}
\providecommand{\LambdaVar}[1]{\lambda_{#1}}
\providecommand{\ThetaVar}[2]{\theta_{#1#2}}
\providecommand{\EtaVar}[2]{\eta_{#1}\!\left(#2\right)}
\providecommand{\EtaVarOpt}[2]{\eta^{\ast}_{#1}\!\left(#2\right)}
\providecommand{\DeltaVar}[2]{\delta_{#1}\!\left(#2\right)}
\providecommand{\DeltaVarOpt}[2]{\delta^{\ast}_{#1}\!\left(#2\right)}
\providecommand{\BetaInc}[1]{\Delta\widehat{\beta}_{#1}}
\providecommand{\BetaIncTime}[2]{\Delta\widehat{\beta}_{#1}^{(#2)}}

\providecommand{\MatchStar}[2]{x_{#1#2}^{\ast}}
\providecommand{\NewMassStar}[2]{z_{#1#2}^{\ast}}
\providecommand{\PostDensityStar}[2]{x_{#1}^{\ast}\!\left(#2\right)}

\providecommand{\EdgeReward}[2]{r_{#1#2}}
\providecommand{\NewMass}[2]{z_{#1#2}}
\providecommand{\PreDensity}[2]{y_{#1}\!\left(#2\right)}
\providecommand{\PostDensity}[2]{x_{#1}\!\left(#2\right)}
\providecommand{\PreCum}[2]{Y_{#1}\!\left(#2\right)}
\providecommand{\PostCum}[2]{X_{#1}\!\left(#2\right)}
\providecommand{\DensityTime}[3]{y_{#1}^{(#2)}\!\left(#3\right)}
\providecommand{\CumTime}[3]{Y_{#1}^{(#2)}\!\left(#3\right)}
\providecommand{\ThresholdVar}{q}
\providecommand{\ThresholdFunc}{G}
\providecommand{\ThresholdVal}[1]{\ThresholdFunc\!\left(#1\right)}

\providecommand{\PriceUniverse}{[T]}
\providecommand{\PriceLevel}[1]{\widehat{w}_{#1}}

\providecommand{\ConfigState}[3]{\eta_{#1,#2}^{(#3)}}

\providecommand{\Eqref}[1]{\eqref{#1}}

\providecommand{\Secref}[1]{Section~\ref{#1}}
\providecommand{\Thmref}[1]{Theorem~\ref{#1}}
\providecommand{\Propref}[1]{Proposition~\ref{#1}}
\providecommand{\Lemref}[1]{Lemma~\ref{#1}}
\providecommand{\Corref}[1]{Corollary~\ref{#1}}

\setlist[itemize]{topsep=2pt,itemsep=1pt,leftmargin=1.25em}
\setlist[enumerate]{topsep=2pt,itemsep=1pt,leftmargin=1.4em}
\tikzset{
  handaxis/.style={-{Latex[length=2.2mm]}, line width=0.85pt},
  handline/.style={line width=0.9pt, line cap=round},
  dashedguide/.style={dash pattern=on 2.2pt off 1.8pt, line width=0.7pt},
  tradebox/.style={rounded corners=2.5pt, draw=tutorialgray, fill=tutoriallight, inner sep=4pt},
  callout/.style={rounded corners=2pt, draw=tutorialgray, fill=white, inner sep=4pt},
  offlinebar/.style={draw=tutorialgray, fill=white, line width=0.85pt},
  shadeblue/.style={pattern=north east lines, pattern color=tutorialblue, draw=tutorialblue, line width=0.75pt},
  shadered/.style={pattern=north west lines, pattern color=tutorialred, draw=tutorialred, line width=0.75pt},
  shadegreen/.style={pattern=dots, pattern color=tutorialgreen, draw=tutorialgreen, line width=0.75pt},
  onlinenode/.style={circle, fill=black, inner sep=1.7pt},
  currentnode/.style={circle, fill=tutorialred, draw=tutorialred, inner sep=2.1pt},
  futurenode/.style={circle, fill=tutorialgray!65, draw=tutorialgray!65, inner sep=1.7pt},
  offlinenode/.style={circle, draw=black, fill=white, inner sep=2pt, line width=0.85pt},
  graphedge/.style={line width=0.85pt},
  figlabel/.style={font=\footnotesize},
  note/.style={font=\small}
}
\newcommand{\Rew}{R}
\newcommand{\E}{\mathbb{E}}
\newcommand{\one}{\mathbf{1}}

\renewcommand{\qed}{$\hfill\square$}

\newcommand{\xhdr}[1]{\noindent\textbf{#1}}

%% file: tex/introduction.tex
\section{Introduction}
% {\color{blue} Requests for Rad:
% \begin{itemize}
%     \item Changing citation format - Tutorials requires a numeric style. I am hoping this will be easier for you with Prism but please let me know if I can help.
%     \item A few pages over limit would be okay but unfortunately we are 7 pages over 30, which may be an issue. Can you look at possible compression in Section 3 (currently longest at 14 pages)?
%     \item I did not address any reviewer comments' specific to Part I except for a minor modification to Figure 1, where I removed the term ``competitive ratio" from both boxes.
% \end{itemize}}
\label{sec:introduction}
Every day, billions of allocation decisions are made in real time with incomplete information, whenever a platform must match a stream of requests to scarce resources before the future is known. A cloud platform assigns computing resources to incoming jobs without knowing what workloads
will arrive next.  A ride-sharing service matches drivers to passengers as requests stream in,
unable to foresee demand surges an hour later.  An ad exchange assigns inventories of arriving impressions to advertisers who are willing to pay for them, without knowing the future impressions.  The defining feature of these problems is that postponing an allocation is either impossible or itself costly. Therefore, decisions are effectively irrevocable---once a resource is assigned or a request is
rejected, the choice cannot be undone---yet future demand remains unknown.

The above paradigm is known as the \emph{online resource allocation} problem, and it has emerged as one of the
central topics at the interface of operations research and computer science.
Over three decades of research---from the seminal work of \citet{karp1990optimal} on online
bipartite matching, through the foundational contributions of
\citet{mehta2007adwords} and \citet{buchbinder2007online} on online budgeted allocation in search advertising, to
the recent wave of models involving stochastic outcomes~\citep{mehta2012stochastic,chan2009stochastic}, assortment planning~\citep{golrezaei2014real,aouad2023assortment}, reusable
resources~\citep{gong2022reusable,goyal2025reusable,feng2021robustness}, batching~\citep{feng2024twostage,feng2024batching}, multichannel traffic~\citep{manshadi2024multichannel}, and more---the community has developed a rich and powerful theory that mostly shares a common analytical backbone for designing and analyzing online algorithms: the \emph{primal--dual framework}.

This tutorial presents two modern versions of this framework for online resource allocation under adversarial arrivals\footnote{A prior-free setting in which no assumptions are made about how demand requests arrive over time.}. Although the classic primal-dual framework is conceptually simple and has proved remarkably effective, its application is often problem-specific: the design principles that lead to optimal competitive algorithms, and the corresponding choices that make the analysis go through, are not yet fully systematized. In particular, many proofs proceed by constructing a tailored LP relaxation and then fitting dual variables, a process that typically must be redesigned for each new model variant.

Our goal is to distill this practice into a \emph{reusable methodology} by articulating a small set of proof templates and design principles that can be applied across settings. The tutorial develops two complementary templates. Before introducing them, we briefly describe the common structure underlying classic primal--dual analyses.
% This tutorial presents a unified, modern treatment of the primal--dual framework for online
% resource allocation under adversarial arrival\footnote{This setting is essentially the prior-free setting, where we make no assumption on how demand requests arrive over time}. This analytical framework, while elegant and extremely successful, is still somewhat mysterious. Despite being conceptually simple and intuitive, the design principles behind optimal competitive primal-dual-based online algorithms, and the right setup to analyze them, are not fully understood. Instances of the approach often involve designing a problem-specific LP relaxation and fitting dual variables, which must be re-tailored for each new setting. Here, we take a stab at resolving this mystery by distilling a
% \emph{reusable methodology}---a set of recipes and systematic design principles that equip the reader to
% both analyze known algorithms and design new ones.\rajan{revisit this statement - do we give recipes to design new algorithms?} In fact, this tutorial develops two complementary sets of such recipes. Before introducing them, we need to explain the basic idea behind almost all primal-dual analyses, including ours.
\subsection{Offline primal/dual LPs and dual fitting}
\label{subsec:intro-dual-fitting}

The standard benchmark for evaluating an online algorithm is the \emph{offline optimum}: the maximum total value achievable if the full arrival sequence were known in advance. In many classical resource-allocation models, this benchmark admits a natural linear-programming (LP) relaxation. The central analytical question is then the following: how can one certify that,  for every instance, an online algorithm achieves at least a certain fraction of the value of this offline LP? This fraction is the algorithm's \emph{competitive ratio}.

The standard route is to develop an online dual-update rule that constructs a feasible or approximately feasible \emph{dual certificate} in tandem with the online primal decisions, and then invoke weak duality to compare the online algorithm with the offline benchmark. To formalize this paradigm, we write the offline benchmark as the packing LP
\begin{equation}
\label{eq:intro-generic-primal}
\max\{c^\top x: Ax\le b,\ x\ge 0\},
\end{equation}
and let its dual be
\begin{equation}
\label{eq:intro-generic-dual}
\min\{b^\top y: A^\top y\ge c,\ y\ge 0\}.
\end{equation}
Suppose the online algorithm produces a feasible primal solution with value \(\ALG\).
To prove a competitive ratio \(\CompRatio\in(0,1)\), it suffices to construct a nonnegative vector
\(\widehat y\) such that
\begin{equation*}
\label{eq:intro-dual-fitting-target}
    b^\top \widehat y \le \ALG,
    \qquad
    A^\top \widehat y \ge \CompRatio c.
\end{equation*}
The second inequality implies that \(\widehat y/\CompRatio\) is feasible for the dual LP
\eqref{eq:intro-generic-dual}.  Hence, by weak duality,
\[
    \OPT
    \le b^\top(\widehat y/\CompRatio)
    \le \ALG/\CompRatio,
\]
and so \(\ALG\ge \CompRatio\, \OPT\). This trick is the standard \emph{dual-fitting} paradigm in approximation algorithms.

The weak-duality step itself is straightforward; the main challenge is to construct \(\widehat y\) in a systematic way. In many classical analyses, the dual-update rules used to build certificates are tailored to the specific model and can appear ad hoc. These difficulties become more pronounced in settings with stochastic outcomes: the choice of an offline benchmark is more delicate, LP relaxations can be loose (yielding weak competitive-ratio bounds), and in models with richer stochastic dynamics it may be unclear how to formulate a useful LP relaxation in the first place.

A closely related question is how to design online algorithms that admit strong dual certificates. In adversarial models, algorithm design is often guided by a tension between two objectives. A \emph{greedy} rule attempts to extract as much value as possible from the current arrival, whereas a \emph{hedging} rule preserves future flexibility by avoiding premature overuse of scarce resources. Classical algorithms for fractional allocation---such as BALANCE for (vertex-weighted) online bipartite matching~\citep{kalyanasundaram2000optimal} and its extension to online budgeted allocation (AdWords)~\citep{mehta2007adwords}---implement this tradeoff through scoring rules that are typically model-specific.

These considerations motivate the development of proof templates and design principles that systematize algorithm design and dual fitting across models, and that remain useful even when LP relaxations are weak or difficult to formulate.
\subsection{A two-part tutorial}
\label{subsec:intro-two-part}
This tutorial is organized into two parts around two complementary primal--dual
reasonings (\Cref{fig:two-parts}):
\begin{itemize}
    \item \textbf{Part~I} (\Cref{sec:part-I}) studies a \emph{convex-programming} viewpoint for the design and analysis of algorithms based on techniques introduced in \citet{feng2024twostage, feng2024batching}. In short, we introduce a new interpretation of several existing algorithms in the literature, where algorithms now solve the arrival-wise linear programs of the greedy rule, regularized by a convex function (hence convex programs), at each arrival instead of following an ad hoc allocation rule. The role of convex regularization is to encode the trade-off between greediness and hedging directly in the arrival-wise optimization problem. Given such convex-programming-based online allocations, we show that the dual certificate should be constructed (online) using the Karush--Kuhn--Tucker (KKT) conditions of the same arrival-wise convex program that defines the algorithm. Thus the dual-fitting certificate is not guessed after the algorithm is written down. Rather, the regularized convex program simultaneously specifies the primal decision rule, the state variable that the algorithm tracks, and the shadow prices that enter the dual construction.
\item \textbf{Part~II} (\Cref{sec:part-II}) focuses on models in which allocation decisions trigger stochastic outcomes. In these settings, the natural offline benchmark is the optimal solution to a high-dimensional stochastic dynamic program, making direct comparisons between an online policy and the offline optimum analytically challenging. A standard approach is to use an LP relaxation of the offline benchmark and fit a dual certificate, but such relaxations can be loose, and for richer stochastic dynamics it may be unclear how to formulate a useful LP in the first place. To bypass these limitations, we present the \emph{LP-free framework} developed by \citet{goyal2023stochastic, goyal2025reusable}, which replaces traditional edge-level dual constraints with a resource-level covering condition. This approach yields a modular proof structure that translates seamlessly across problem variants and naturally incorporates sample-path couplings. We illustrate the framework's versatility by analyzing the Greedy algorithm across three canonical settings---stochastic rewards, patience (where arrivals probe multiple resources sequentially), and stochastic reusability (where consumed resources become unavailable for a random duration before returning to the system)---alongside their extensions.
    
\end{itemize}

\begin{figure}[t]
    \centering
    \begin{tikzpicture}[x=1cm,y=1cm]
        \node[tradebox, minimum width=5.8cm, minimum height=1.6cm, align=left,
              font=\small] (left) at (-5.2,0) {%
            \textbf{\color{tutorialblue}Part~I: Convex-Programming Approach}\\[2pt]
            Offline LP $\to$ regularized convex program $\to$\\
            KKT dual certificate };
        \node[tradebox, minimum width=5.8cm, minimum height=1.6cm, align=left,
              font=\small] (right) at (5.2,0) {%
            \textbf{\color{tutorialred}Part~II: LP-Free Approach}\\[2pt]
            Stochastic outcomes (Weak LP) $\to$ \\ LP-free certificate $\to$
            generalized weak duality};
        \draw[handline, {Latex[length=2.4mm]}-{Latex[length=2.4mm]}, tutorialgray]
            (-1.65,0) -- (1.65,0);
        \node[note, text=tutorialgray, above] at (0,0.15) {\textbf\emph{Primal--Dual Reasoning}};
    \end{tikzpicture}
    \caption{The two complementary viewpoints developed in this tutorial.  Part~I builds dual
    certificates from KKT conditions of regularized convex programs.  Part~II constructs LP-free
    certificates for settings with stochastic outcomes and reusable resources.}
    \label{fig:two-parts}
    \vspace{-5mm}
\end{figure}

\begin{table}[t]
\centering
\small
\begin{tabularx}{0.98\linewidth}{>{\raggedright\arraybackslash}p{0.11\linewidth} >{\raggedright\arraybackslash}p{0.35\linewidth} X}
\toprule
Part & Methodological viewpoint & Representative model classes \\
\midrule
Part~I (\Cref{sec:part-I})& Offline LP benchmark, arrival-wise regularized convex program, KKT-based dual construction, and competitive analysis by dual fitting. &
Online fractional matching and vertex-weighted matching
\citep{karp1990optimal,kalyanasundaram2000optimal,aggarwal2011vertex,devanur2013randomized};
edge-weighted matching with free disposal \citep{feldman2009free};
batch arrival \citep{feng2024batching};
configuration allocation / whole-page optimization \citep{devanur2016wholepage};
AdWords \citep{mehta2007adwords,buchbinder2007online};
and costly cancellations \citep{ekbatani2022cancellations}. \\
\addlinespace[3pt]
Part~II (\Cref{sec:part-II})& LP-free certificates that compare the online policy directly against the benchmark bypassing the need for an offline LP relaxation. &
Stochastic rewards \citep{mehta2012stochastic,mehta2014stochastic,goyal2023stochastic, huang2024online2};
reusable resources \citep{gong2022reusable,goyal2025reusable,feng2021robustness,delong2024online}; stochastic rewards with patience \citep{brubach2025online, borodin2022prophet};
multichannel traffic \citep{manshadi2024multichannel};
AdWords with unknown budgets \citep{udwani2025adwords}; stochastic allocation and pricing models such as online one-sided~\citep{golrezaei2014real} and two-sided assortment \citep{aouad2023assortment} and pricing with show-all constraint~\citep{goyal2022pricing}. \\
\bottomrule
\end{tabularx}
\smallskip
\caption{The two methodological lenses developed in the tutorial.}
\vspace{-2mm}
\label{tab:two-part-roadmap}
\end{table}

There are already excellent references on online algorithms, online matching, and primal-dual techniques; see, for example,
\citet{borodin2005online} and \citet{buchbinder2009design} for general treatments of online algorithms and primal-dual method, the survey of \citet{mehta2013survey} for classical results, and the more recent survey of \citet{huang2024online} for recent developments in the literature. The emphasis of the present tutorial is different.
The objective is methodological: to organize a broad collection of online-allocation analyses around
the above two design principles. Taken together, we hope the two parts provide a unified language for various models in online resource allocation under adversarial arrival. In what comes next---\Cref{sec:part-I} and \Cref{sec:part-II}---we demonstrate our two recipes by applying them on canonical models. Section~\ref{sec:open} concludes with open problems and additional discussions. We postpone more complicated extension models to the electronic companion (EC). We have summarized the literature related to both the main body and EC materials in \Cref{tab:two-part-roadmap}.

%% file: tex/models.tex
\section{Base model and notation}
\label{sec:models}
This section formalizes a basic adversarial online-allocation setting and the common notation used in the main body. Formal models for extensions appear later in \Cref{sec:free-disposal,sec:patience,sec:reusable} and in the electronic companion. \Cref{tab:notation-main} lists notation used repeatedly across the tutorial; model-specific notation is introduced locally, with extension-model symbols recalled again in the electronic companion. 

% See \Cref{tab:ec-roadmap} for a list of these extensions. 
\subsection{Adversarial arrival and competitive analysis}
\label{subsec:model-adversarial}

At a high level, an adversarial online-allocation \emph{instance} consists of a set of offline resources, a sequence of online requests, and model-specific data (e.g., feasibility constraints, rewards, outcome distributions, and capacity-related parameters). The resource set and its associated data (e.g., capacities) are known to the algorithm at the outset. In the adversarial model, an adversary selects both the underlying instance and the order in which requests are revealed, subject only to the rules of the model. Requests then arrive sequentially at times \(\ArrivalIdx=1,2,\ldots\); upon the arrival of request \(\ArrivalIdx\), the algorithm observes the data revealed with that request and must irrevocably choose its allocation for that request without seeing any future requests. The model may also include stochastic outcomes---for example, stochastic rewards or stochastic usage durations in the case of reusable resources.

Performance is benchmarked against an offline optimum \(\OPT\). In deterministic models, \(\OPT\) observes the entire request sequence in advance and solves the corresponding offline optimization problem to optimality. In models with stochastic outcomes, we compare against a non-anticipative offline benchmark that knows the full instance but does not observe realizations of post-allocation random variables in advance. This benchmark can be formulated as a (generally exponential-sized) stochastic dynamic program that maximizes the expected objective value. We return to the precise definition of \(\OPT\) in models with stochastic outcomes later.

% \begin{definition}[Non-anticipative offline benchmark]\label{def:opt}
% The offline benchmark $\OPT$ knows the entire graph $G$ and all probabilities $\{p_{jt}\}$ in advance, but it is non-anticipative: it does not observe the outcome of an attempted match before deciding whether to attempt it. It may process arrivals in any order and may condition future actions on outcomes already observed. Equivalently, $\OPT$ can be formulated as a stochastic dynamic program whose decisions are adapted to realized outcomes.
% \end{definition}

To compare a (possibly randomized) online algorithm \(\ALG\) with \(\OPT\), we evaluate the \emph{worst-case competitive ratio} (or simply the competitive ratio), defined as
\begin{equation}
\label{eq:competitive-ratio-definition}
    \CR(\ALG)
    :=
    \inf_{\mathcal I}
    \frac{\EE[\Rew^{\ALG(\mathcal I)}]}{\EE[\Rew^{\OPT(\mathcal I)}]},
\end{equation}
where $\E[\Rew^{\ALG(\mathcal I)}]$ (resp.\ $\E[\Rew^{\OPT(\mathcal I)}]$) denotes the expected objective value (i.e., \emph{reward}) of the online algorithm (resp.\ the offline benchmark) on instance \(\mathcal I\), the infimum is taken over all instances \(\mathcal I\), and the expectation is over the internal randomness of the algorithm (and any additional randomness in the model, when present). A lower bound of the form \(\CR(\ALG)\ge \Gamma\) means that the algorithm achieves at least a \(\Gamma\)-fraction of the offline optimum on every adversarial instance.

In Part~I, we focus on \emph{fractional} algorithms, where allocations may take any value in $[0,1]$.\footnote{All results can be extended to randomized integral allocations under ``large-budget'' assumptions by using standard randomized rounding algorithms and concentration inequalities; see \citep{feng2024batching} for an example.} In Part~II, we focus on the integral allocations (and we only study ``Greedy'' algorithm).

\begin{table}[t]
\centering
\small
\begin{tabularx}{0.9\linewidth}{>{\raggedright\arraybackslash}p{0.27\linewidth} X}
\toprule
Symbol & Meaning in the main body \\
\midrule
$\OnlineVertices,\OfflineVertices,\EdgeSet$ & online requests, offline resources, feasible incidences \\
$\ArrivalIdx$ & index of an online arrival \\
$\OfflineIdx$ & index of an offline resource \\
$\Neighbor{\ArrivalIdx}$ & feasible offline neighborhood of arrival $\ArrivalIdx$ \\
$\MatchVar{\ArrivalIdx}{\OfflineIdx}$ & fractional mass assigned from arrival $\ArrivalIdx$ to resource $\OfflineIdx$ \\
$\VertexWeight{\OfflineIdx}$ & reward/weight of offline resource $\OfflineIdx$ in vertex-weighted models \\
$\EdgeReward{\ArrivalIdx}{\OfflineIdx}$ & reward of edge $(\ArrivalIdx,\OfflineIdx)$ in edge-weighted models \\
$\LoadTime{\OfflineIdx}{\ArrivalIdx}$ (equivalently $y_j$) & cumulative load of resource $\OfflineIdx$ after arrival $\ArrivalIdx$ \\
$\DualOn{\ArrivalIdx},\DualOff{\OfflineIdx}$ & online-side and offline-side dual variables in LP-based analyses \\
$\alpha_{tj},\DualOff{\OfflineIdx}$ & LP-free certificate variables in Part~II \\
$\RegFun,\RegDer$ & regularizer and its derivative \\
$\ALG,\OPT,\CR$ & online algorithm, offline benchmark, competitive ratio \\
$\Rew^{\mathcal A},\Rew_j^{\mathcal A}$ & total sample path reward under algorithm $\mathcal{A}$ and sample path reward attributable to resource $j$ under algorithm $\mathcal{A}$ (see Remark \ref{obs:det})\\
$U_j^{\mathcal A},V_t^{\mathcal A}$ & set of arrivals that algorithm $\mathcal{A}$ attempts to match to resource $j$ and set of resources available to algorithm $\mathcal{A}$ when $t$ arrives\\
$V^{\mathcal{A}}_{T+1}$ & set of resources available to algorithm $\mathcal{A}$ at the end of the horizon \\
\bottomrule
\end{tabularx}
\smallskip
\caption{Common notation used repeatedly in the main body. Additional notation for particular models is introduced locally in the relevant sections.}
\label{tab:notation-main}
\vspace{-3mm}
\end{table}

\subsection{Base model: vertex-weighted online bipartite matching with stochastic rewards}
\label{subsec:model-base}
\label{subsec:model-vertexweighted}

As a warm-up model, consider (unweighted, deterministic) online bipartite matching. In this model,
\(\Graph=(\OnlineVertices\cup\OfflineVertices,\EdgeSet)\) is a bipartite graph with offline
set \(\OfflineVertices\) and online set \(\OnlineVertices\), where the offline side represents resources and the online side represents requests arriving over time. By convention, the online set is indexed in arrival order,
\(\OnlineVertices=\{1,2,\ldots,T\}\).
When arrival \(\ArrivalIdx\) occurs, its neighborhood
\(\Neighbor{\ArrivalIdx}:=\{\OfflineIdx\in\OfflineVertices:(\ArrivalIdx,\OfflineIdx)\in\EdgeSet\}\)
is revealed. The algorithm chooses nonnegative binary variables
\(\MatchVar{\ArrivalIdx}{\OfflineIdx}\in\{0,1\}\) for
\(\OfflineIdx\in\Neighbor{\ArrivalIdx}\), and we set
\(\MatchVar{\ArrivalIdx}{\OfflineIdx}=0\) for
\(\OfflineIdx\notin\Neighbor{\ArrivalIdx}\). In the fractional setting, \(\MatchVar{\ArrivalIdx}{\OfflineIdx}\in[0,1]\). The decision must satisfy
\begin{equation*}
\label{eq:model-unweighted-online-constraint}
    \sum_{\OfflineIdx\in\Neighbor{\ArrivalIdx}} \MatchVar{\ArrivalIdx}{\OfflineIdx} \le 1,
\end{equation*}
so that each arrival \(\ArrivalIdx\) is assigned to at most one offline vertex. We normalize offline resource capacities to be one,\footnote{This normalization is without loss because one can replace each offline vertex with integer capacity by that many identical unit-capacity copies for our analysis of integral algorithms. In the case of fractional allocations, this is again without loss of generality, as one can reduce the problem with arbitrary capacities to unit capacity by normalization and change of variables.} and therefore we have
\begin{equation*}
\label{eq:model-unweighted-offline-constraint}
    \sum_{\ArrivalIdx:(\ArrivalIdx,\OfflineIdx)\in\EdgeSet}
    \MatchVar{\ArrivalIdx}{\OfflineIdx}
    \le 1
    \qquad
    \forall \OfflineIdx\in\OfflineVertices.
\end{equation*}
For the online analysis it is convenient to define the cumulative load on resource
\(\OfflineIdx\) after arrival \(\ArrivalIdx\) by
\begin{equation}
\label{eq:model-load-definition}
    \LoadTime{\OfflineIdx}{\ArrivalIdx}
    :=
    \sum_{s\le \ArrivalIdx:(s,\OfflineIdx)\in\EdgeSet}
    \MatchVar{s}{\OfflineIdx},
    \qquad
    \LoadTime{\OfflineIdx}{0}:=0.
\end{equation}
In the vertex-weighted model, each offline node \(\OfflineIdx\in\OfflineVertices\) has a weight (or reward) \(\VertexWeight{\OfflineIdx}\geq 0\), and the objective is $\sum_{(\ArrivalIdx,\OfflineIdx)\in\EdgeSet}
    \VertexWeight{\OfflineIdx}\,\MatchVar{\ArrivalIdx}{\OfflineIdx}$. In the unweighted special case, the objective is the size of (fractional or integral) matching, that is, $\sum_{(\ArrivalIdx,\OfflineIdx)\in\EdgeSet}
    \MatchVar{\ArrivalIdx}{\OfflineIdx}$. See \Cref{fig:base-model} for an illustration.

\vspace{2mm}

\begin{figure}[t]
    \centering
    \begin{tikzpicture}[x=1cm,y=1cm]
        \node[figlabel] at (-0.55,3.45) {online};
        \draw[handline, tutorialgray, -{Latex[length=2.4mm]}] (-0.2,3.1) -- (-0.2,0.2);
        \foreach \y in {2.7,2.1,1.5,0.9,0.3} {
            \node[futurenode] at (0,\y) {};
        }
        \node[currentnode] (tcur) at (0,1.5) {};
        \node[figlabel, text=tutorialred, left=0.18cm of tcur] {$\ArrivalIdx$};
        \node[figlabel] at (6.2,3.45) {offline};
        \foreach \idx/\y/\lab in {1/2.9/{r_{j_1}},2/2.2/{r_{j_2}},3/1.5/{r_{j_3}},4/0.8/{r_{j_4}}} {
            \node[offlinenode] (v\idx) at (5.2,\y) {};
            \node[figlabel, right=0.2cm of v\idx] {$\lab$};
        }
        \draw[graphedge] (tcur) -- (v1);
        \draw[graphedge] (tcur) -- (v2);
        \draw[graphedge, tutorialblue] (tcur) -- (v3);
        \draw[graphedge] (tcur) -- (v4);
        \node[callout, align=left, anchor=west] at (6.5,1.55)
        {Choose \(\{\MatchVar{\ArrivalIdx}{\OfflineIdx}\}_{\OfflineIdx\in\Neighbor{\ArrivalIdx}}\) so that\\
         \(\left(\sum_{\OfflineIdx}\MatchVar{\ArrivalIdx}{\OfflineIdx}\le 1\right)\) + $\left(\textrm{satisfying offline capacity constraints}\right)$};
    \end{tikzpicture}
    \caption{Vertex-weighted online (fractional or integral) matching.  The current arrival $t$ reveals its
    neighborhood and the algorithm immediately chooses a feasible allocation for this arrival.}
    \label{fig:base-model}
    \vspace{-6mm}
\end{figure}

We next describe the stochastic-rewards generalization (relevant to \Cref{sec:part-II}). Each edge \((\ArrivalIdx,\OfflineIdx)\in\OnlineVertices\times\OfflineVertices\) is associated with a success probability \(p_{\ArrivalIdx\OfflineIdx}\in[0,1]\).\footnote{In the stochastic-rewards model, we may assume a complete graph without loss of generality by setting \(p_{\ArrivalIdx\OfflineIdx} = 0\) for absent edges.} When arrival \(\ArrivalIdx\) occurs, the algorithm observes its neighborhood \(\Neighbor{\ArrivalIdx}\) and the corresponding probabilities \(\{p_{\ArrivalIdx\OfflineIdx}\}_{\OfflineIdx\in\Neighbor{\ArrivalIdx}}\), and then selects at most one available resource \(\OfflineIdx\in\Neighbor{\ArrivalIdx}\) to match with \(\ArrivalIdx\). This match succeeds with probability \(p_{\ArrivalIdx\OfflineIdx}\) and fails otherwise, independently across all arrivals. 

The rewards are stochastic in the following sense: a successful match  yields a reward \(r_{\OfflineIdx}\) and consumes the resource, whereas an unsuccessful match yields no reward and leaves the resource available for subsequent arrivals.\footnote{Equivalently, each offline vertex can be successfully matched multiple times but yields its reward at most once, upon its first successful match. The algorithms considered in this tutorial never attempt to match an unavailable resource, though this need not hold for policies that are oblivious to stochastic realizations \citep[e.g.,][]{udwani2025optimality}.} The goal is to maximize the expected total reward. The deterministic unweighted warm-up corresponds to the special case of \(r_{\OfflineIdx} = 1\) and \(p_{\ArrivalIdx\OfflineIdx}\in\{0,1\}\) for all edges.

\begin{definition}[Non-anticipative offline benchmark]\label{def:opt}
	The offline benchmark $\OPT$ knows all probabilities $\{p_{tj}\}$ in advance, but it is non-anticipative: it does not observe the outcome of an attempted match before deciding whether to attempt it. It may process arrivals in any order and may condition future actions on outcomes already observed. Equivalently, $\OPT$ can be formulated as a stochastic dynamic program whose decisions are adapted to realized outcomes.
\end{definition}

Let $\mathcal{A}\in\{\ALG,\OPT\}$ denote a generic non-anticipative policy. Along a realized sample path, let $U^{\mathcal{A}}_j$ be the set of arrivals for which $\mathcal{A}$ attempts a match with resource $j$ while $j$ is available, and let $V^{\mathcal{A}}_t$ and $V^{\mathcal{A}}_{T+1}$ denote the set of available resources at arrival $t$ and at the horizon's end, respectively. The following algebraic equivalence will prove central to our subsequent analyses.

\begin{remark}[Deterministic Reward \& Revenue Decomposition]\label{obs:det}
Fix an online or offline non-anticipative algorithm $\mathcal{A}\in\{\ALG,\OPT\}$. If the algorithm selects edge $(t,j)$ while $j$ is available, then the random success reward can be replaced by a guaranteed (deterministic) reward $p_{tj}r_j$ without changing the expected total reward. With this substitution the only remaining randomness is in whether the resource is consumed (i.e., becomes unavailable due to a successful match). Let $\Rew^{\mathcal{A}}$ denote the resulting sample-path deterministic reward, and let $\Rew^{\mathcal{A}}_j$ denote the deterministic reward attributable to resource $j$, so that $\Rew^{\mathcal{A}}=\sum_j \Rew^{\mathcal{A}}_j$. Then
\begin{equation}\label{eq:rev-decomp}
	\E[\Rew^{\mathcal{A}}_j]
	= \Pr\!\bigl[j \notin V^{\mathcal{A}}_{T+1}\bigr]\,r_j\,
	= \E\!\big[\sum_{t\in U^{\mathcal{A}}_j} p_{tj}\big]r_j.
\end{equation}
\end{remark}
%Given this simple model, the optimum offline can be formulated as a simple LP. This LP benchmark and its dual are
\subsubsection{Classical LP relaxation}
A standard relaxation is the following fluid LP, in which assigning $t$ to $j$ deterministically consumes $p_{tj}$ units of capacity from resource $j$:

\begin{minipage}[t]{0.49\linewidth}
\textbf{Primal LP}
\begin{equation}
\begin{aligned}[t]
    \underset{\mathbf{x}\geq0}{\max} \quad
    & \sum_{(\ArrivalIdx,\OfflineIdx)\in\EdgeSet}
      \VertexWeight{\OfflineIdx}\,p_{tj}\,\MatchVar{\ArrivalIdx}{\OfflineIdx} \\
    \text{s.t.}\quad
    & \sum_{\OfflineIdx:(\ArrivalIdx,\OfflineIdx)\in\EdgeSet}
      \MatchVar{\ArrivalIdx}{\OfflineIdx} \le 1
      && \forall \ArrivalIdx\in \OnlineVertices, \\
    & \sum_{\ArrivalIdx:(\ArrivalIdx,\OfflineIdx)\in\EdgeSet}
      p_{tj}\,\MatchVar{\ArrivalIdx}{\OfflineIdx} \le 1
      && \forall \OfflineIdx\in \OfflineVertices, \\
    & \MatchVar{\ArrivalIdx}{\OfflineIdx}\ge 0
      && \forall (\ArrivalIdx,\OfflineIdx)\in\EdgeSet.
\end{aligned}
\label{eq:offline-primal-basic}\label{eq:offline-primal-basic-unweighted}\label{eq:primal-lp}
\end{equation}
\end{minipage}\hfill
\begin{minipage}[t]{0.49\linewidth}
\textbf{Dual LP}
\begin{equation}
\begin{aligned}[t]
    \min \quad
    & \sum_{\ArrivalIdx\in\OnlineVertices} \DualOn{\ArrivalIdx}
      + \sum_{\OfflineIdx\in\OfflineVertices}\DualOff{\OfflineIdx} \\
    \text{s.t.}\quad
    & \DualOn{\ArrivalIdx}+p_{tj}\,\DualOff{\OfflineIdx} \ge p_{tj}\, \VertexWeight{\OfflineIdx}
      && \forall (\ArrivalIdx,\OfflineIdx)\in\EdgeSet, \\
    & \DualOn{\ArrivalIdx},\DualOff{\OfflineIdx} \ge 0
      && \forall \ArrivalIdx\in \OnlineVertices,\ \forall \OfflineIdx\in\OfflineVertices.
\end{aligned}
\label{eq:offline-dual-basic}\label{eq:offline-dual-basic-unweighted}\label{eq:dual-lp}
\end{equation}
\end{minipage}
\smallskip

One can show that $\OPT$ induces a feasible primal solution with objective value $\E[\Rew^{\OPT}]$, and therefore $\text{Primal-LP}\ge \E[\Rew^{\OPT}]$. Hence, any dual-feasible solution can be used to certify a competitive ratio via weak duality. In the deterministic setting (i.e., $p_{tj}\in\{0,1\}$), the relaxation is tight and captures the offline optimum. In contrast, in the stochastic-rewards model the relaxation can substantially overestimate the true offline value, as illustrated next.

\begin{remark}[{Looseness of the LP}]\label{rem:loose}
	The relaxation in \eqref{eq:primal-lp} can be loose when rewards are stochastic. Consider a single resource $j$ and $1/p$ arrivals, each adjacent only to $j$ with success probability $p$. The LP can attain value $1$ by fractionally matching all arrivals to $j$ so that $\sum_t p x_{tj}=1$. In contrast, any feasible policy can only attempt matches sequentially and the probability of at least one success equals $1-(1-p)^{1/p}\overset{p\to 0}{\longrightarrow } 1-1/e$. 
\end{remark}

In models with richer stochastic dynamics, identifying an effective LP relaxation can be challenging, and even a natural relaxation may exhibit nontrivial gaps. This motivates the division pursued in the remainder of the tutorial. In Part~I we focus on deterministic settings (and fractional allocations), where LP-based primal--dual analyses, combined with a convex-programming viewpoint, can be used to design algorithms with optimal competitive ratios across a wide range of models. In Part~II we focus on stochastic settings and integral allocations, where LP relaxations may be weak, and we show how LP-free certificates yield a unified analysis across different forms of stochastic outcomes.

%% file: tex/recipe.tex
 We now turn to the first methodological ingredient of this tutorial: a convex-programming-based primal-dual recipe for \textit{both} designing and analyzing competitive online fractional algorithms when  rewards are deterministic (i.e., setting $p_{\ArrivalIdx\OfflineIdx}=1$ throughout this section). We return to settings with stochastic rewards and other forms of post-allocation stochasticity in \Cref{sec:part-II}. 

 \subsection{The universal convex-regularization recipe}
\label{sec:recipe}
The starting point is to revisit the arrival-wise greedy rule: when arrival~$\ArrivalIdx$ appears, solve the local LP that maximizes the immediate reward from this arrival. This rule is usually too aggressive because it ignores the scarcity of offline capacities and fails to hedge against future arrivals. Yet, in deterministic settings, it can easily be shown that greedy is $1/2$-competitive. Our goal is to modify this rule to trade off greediness and hedging, thereby obtaining improved, and ideally optimal, competitive ratios.

Where do optimal competitive fractional algorithms in the literature, such as BALANCE~\citep{kalyanasundaram2000optimal} for unweighted fractional online matching, come from, and is there a systematic way to analyze them? In a nutshell, our recipe provides the answer: to repair the greedy algorithm, we add a convex penalty term---that is, a \emph{regularizer}---on the \emph{post-decision state} to its objective function. As we show shortly, this term not only injects the right kind of balance into the greedy rule, but also enables a dual-fitting-style analysis based solely on the Karush--Kuhn--Tucker (KKT) conditions of our arrival-wise convex programs. Rather than guessing a dual certificate, we extract it directly from the KKT conditions and the corresponding Lagrangian multipliers.

\begin{figure}[t]
\begin{tcolorbox}[
  colback=tutorialgray!8,
  colframe=tutorialgray!60,
  coltitle=black,
  title={\textbf{Five-step convex-regularization recipe}},
  fonttitle=\bfseries,
  arc=2.5pt,
  boxrule=0.7pt,
  top=4pt, bottom=4pt, left=6pt, right=6pt
]
\begin{enumerate}[leftmargin=1.4em, itemsep=3pt]
    \item \textbf{Choose the state.}
    Identify the minimal sufficient statistic that summarizes how past allocations constrain future
    feasibility and reward (e.g., the vector of remaining capacities).

    \item \textbf{Regularize the arrival-wise greedy objective.}
    At each arrival, solve
    \[
        \max\;\bigl\{\text{immediate reward}
        \;-\;
        \text{convex regularizer of the post-decision state}\bigr\}
    \]
    over the feasible allocation for the current arrival.

    \item \textbf{Write the Lagrangian and record the KKT system.}
    Stationarity and complementary slackness encode the marginal tradeoff between immediate reward and the shadow cost of consuming offline capacity.  This system drives the entire dual-fitting analysis.

    \item \textbf{Extract an online dual certificate update from the KKT multipliers.}
    The fitted dual certificate is constructed sequentially as online nodes arrive: once arrival~$\ArrivalIdx$ is processed, its online dual variable is fixed and each offline dual variable is incremented by a quantity read off from the primal optimizer and the KKT Lagrangian multipliers of that arrival.

    \item \textbf{Choose the regularizer so that approximate dual feasibility reduces to a scalar identity.}
    The regularizer is pinned down by requiring that the dual-feasibility gap close at every point in the state space, which usually leads to a suitable functional identity---for example, a differential equation---that yields the target competitive ratio.
\end{enumerate}
\end{tcolorbox}
\end{figure}

At first glance, the recipe in \Cref{sec:part-I} may seem to provide merely an alternative interpretation of classical algorithms in the literature, including BALANCE, through the lens of convex programming. Its purpose, however, is not simply reinterpretation, but rather \emph{systematization}: the same five ingredients---a state variable, a regularized arrival-wise concave program, its KKT conditions, an online dual update, and a scalar identity that closes the dual-fitting argument\footnote{In the matching warm-up this yields the functional equation $1-f(y)+F(y)=\CompRatio$; in the edge-weighted model with free disposal the same identity is applied pointwise in the reward coordinate; and in the batch-arrival model it becomes a stage-wise functional recursion.}---recur across every model we treat, and the recipe uses them to recover optimal competitive ratios in a unified way. We hope this perspective will provide a useful construction that can be applied systematically in new settings, both to design fractional online algorithms and to analyze how they improve upon greedy.

\smallskip
\xhdr{Overview of the primal--dual analysis.}
Before diving into specific models, let us outline the dual-fitting analysis template that recurs throughout Part~I.  Given an online allocation problem whose offline relaxation is a linear program, the goal is to prove that the online algorithm achieves at least a $\CompRatio$-fraction of the offline optimum.  To this end, we use the following generic lemma in \Cref{sec:part-I}.

\begin{lemma}[Generic dual-fitting certificate]
\label{lem:generic-dual-fitting}
Consider an offline primal LP of the form \eqref{eq:intro-generic-primal} and its dual
\eqref{eq:intro-generic-dual}.  Suppose an online algorithm $\ALG$  constructs a nonnegative vector
\(\widehat y\) such that
\begin{equation}
\label{eq:generic-dual-fitting-conditions}
    b^\top \widehat y \le \textrm{obj}(\ALG),
    \qquad
    A^\top \widehat y \ge \CompRatio c
    \quad\text{for some }\CompRatio\in(0,1].
\end{equation}
where $\textrm{obj}(\ALG)$ denotes the primal objective of $\ALG$. Then the algorithm is \(\CompRatio\)-competitive.
\end{lemma}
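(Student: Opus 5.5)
The argument is weak duality applied to a rescaled copy of the fitted vector; all the work is checking that the rescaled vector is dual-feasible.

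\textbf{Step 1 (dual feasibility).} Define \(y' := \widehat y/\CompRatio\). This is well defined because \(\CompRatio\in(0,1]\) is strictly positive. Since \(\widehat y\ge 0\) and \(\CompRatio>0\), we have \(y'\ge 0\). Dividing the second condition in \eqref{eq:generic-dual-fitting-conditions} by \(\CompRatio\) gives \(A^\top y'\ge c\). Hence \(y'\) is feasible for the dual LP \eqref{eq:intro-generic-dual}.

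\textbf{Step 2 (weak duality).} Let \(x^\ast\) be an optimal solution of the primal \eqref{eq:intro-generic-primal}, with value \(\OPT\). If the primal optimum is not attained, apply the same argument to an arbitrary feasible \(x\) and take the supremum; the dual-feasible \(y'\) shows the primal is bounded. Using \(x^\ast\ge 0\), \(A^\top y'\ge c\), \(Ax^\ast\le b\) and \(y'\ge 0\), we get
\[
c^\top x^\ast \le (A^\top y')^\top x^\ast = y'^\top A x^\ast \le y'^\top b .
\]
Therefore \(\OPT\le b^\top \widehat y/\CompRatio\).

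\textbf{Step 3 (conclusion).} Combining Step 2 with the first condition \(b^\top\widehat y\le \textrm{obj}(\ALG)\) gives \(\OPT\le \textrm{obj}(\ALG)/\CompRatio\), that is, \(\textrm{obj}(\ALG)\ge \CompRatio\,\OPT\). This holds for every instance; here the LP relaxation is exact or an upper bound on the offline benchmark, which holds in the deterministic setting of Part~I. Hence the ratio in \eqref{eq:competitive-ratio-definition} is at least \(\CompRatio\).

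For a fractional algorithm in the deterministic setting, \(\textrm{obj}(\ALG)\) is the realized reward. If the algorithm is randomized, the same inequality is applied pathwise, or to expectations using linearity, since the conditions are linear in \(\widehat y\).

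I do not expect any real obstacle; the lemma is a direct consequence of weak duality. The only points that need care are that \(\CompRatio>0\), so the rescaling preserves both the nonnegativity and the direction of the inequalities, and the formal identification of \(\OPT\) with (or its domination by) the primal LP value.
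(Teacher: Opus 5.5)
Your proposal is correct and follows the paper's argument: rescale $\widehat y$ by $1/\CompRatio$ to obtain a dual-feasible vector, apply weak duality to get $\OPT\le b^\top\widehat y/\CompRatio$, and combine with $b^\top\widehat y\le \textrm{obj}(\ALG)$. Your explicit derivation of weak duality and the remarks on attainment and on $\OPT$ being bounded by the LP value only spell out details the paper's proof uses implicitly.
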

\begin{proof}{Proof.}
The second inequality in \eqref{eq:generic-dual-fitting-conditions} implies that
\(\widehat y/\CompRatio\) is feasible for the dual LP \eqref{eq:intro-generic-dual}.  Hence, by
weak duality, we have $\OPT
    \le b^\top (\widehat y/\CompRatio)
    = \frac{1}{\CompRatio} b^\top \widehat y
    \le \frac{1}{\CompRatio}\ALG.$ Rearranging gives \(\ALG\ge \CompRatio\OPT\). \qed
\end{proof}

To apply \Cref{lem:generic-dual-fitting}, we consider the offline primal--dual LP pair \eqref{eq:offline-primal-basic}--\eqref{eq:offline-dual-basic} and analyze our online algorithm through the following two-stage approach:

\emph{Stage~(i): exact primal-dual accounting.} We construct a fitted dual vector $(\widehat\alpha,\widehat\beta)$ online, in parallel with the primal allocation, so that at each arrival the increase in the dual objective equals the increase in the primal reward. Summing over all arrivals then gives $\sum_{\ArrivalIdx}\DualOnHat{\ArrivalIdx}+\sum_{\OfflineIdx}\DualOffHat{\OfflineIdx}=R^\ALG$, where $R^\ALG$ denotes the total reward of $\ALG$\footnote{In this section of the tutorial, we study only fractional algorithms, which are without loss of generality deterministic.}. Thus, the first condition of \Lemref{lem:generic-dual-fitting} holds with equality.

\emph{Stage~(ii): approximate dual feasibility.}  For every edge $(\ArrivalIdx,\OfflineIdx)$ in the constraint system of the offline dual LP, we show that $\DualOnHat{\ArrivalIdx}+\DualOffHat{\OfflineIdx}\ge \CompRatio\cdot(\text{edge weight})$.  This is the main technical step.  It typically requires two ingredients: a lower bound on the arrival-side dual variable $\DualOnHat{\ArrivalIdx}$ (obtained directly from the stationarity condition of the KKT system), and a lower bound on the resource-side dual variable $\DualOffHat{\OfflineIdx}$ (obtained by a telescoping argument that uses convexity of the regularizer).

Once these two ingredients are in place, \Lemref{lem:generic-dual-fitting} yields the competitive ratio. The key point is that the dual vector is not guessed after the fact; rather, it is \emph{read off} from the same arrival-wise convex program that defines the online algorithm. As we will see shortly, this reduces the analysis to choosing a regularizer $F$ that satisfies the appropriate scalar identity or functional equation.

\smallskip
\xhdr{How to read the rest of Part~I.}
In \Secref{sec:matching}, we study online fractional matching, where the state is a scalar load vector and the regularizer identity can be solved in closed form. In \Secref{sec:free-disposal}, we turn to edge-weighted matching with free disposal, where the same logic survives but the state becomes a reward-indexed cumulative profile. Several further extensions are treated in the electronic companion; see Table~\ref{tab:ec-roadmap}. Throughout, we encourage the reader to keep track of the main ingredients of the recipe.

\begin{table}[htb]
\centering
\small
\begin{tabularx}{0.98\linewidth}{>{\raggedright\arraybackslash}p{0.29\linewidth} >{\raggedright\arraybackslash}p{0.36\linewidth} >{\raggedright\arraybackslash}X}
\toprule
Extension model & Main additional idea & Location in the electronic companion \\
\midrule
Batch arrival / multi-stage matching & stage-dependent polynomial regularizers and a support-graph decomposition defined batch by batch & \Cref{sec:batch} and the deferred proofs in \Cref{sec:ec-batch} \\
Configuration allocation / whole-page optimization & higher dimensional price-level state variables and direct use of convex duality when no convex-programmin- based graph decomposition is available & \Cref{sec:configuration} and the deferred proofs in \Cref{sec:ec-config} \\
AdWords / online budgeted allocation & normalized budget load as the relevant state variable in the state-dependent convex programming-based allocation & \Cref{subsec:adwords} \\
Costly cancellations & costly variant of free disposal in the buyback model with linear costs& \Cref{subsec:costly-cancel} \\
Additional special cases & direct specializations such as vertex-weighted $b$-matching and  assortment & \Cref{sec:ec-special-cases} \\
\bottomrule
\end{tabularx}
\smallskip
\caption{Additional Part~I models whose full treatment is deferred to the electronic companion.}
\label{tab:ec-roadmap}
\end{table}
\vspace{-3mm}

%% file: tex/matching.tex
\subsection{Application I: Online fractional matching and vertex-weighted extension}
\label{sec:matching}
This section develops the basic convex-programming template in the simplest online matching models.   First, we analyze the unweighted problem in order to isolate the core reduction of the analysis to a functional identity (in fact, a differential equation).  Second, we extend the same calculation to the vertex-weighted model (the unweighted warm-up is simply
the special with \(\VertexWeight{\OfflineIdx}\equiv 1\)).

\subsubsection{Warm-up: online fractional bipartite matching}
\label{subsec:matching-warmup}

Assume first that \(\VertexWeight{\OfflineIdx}\equiv 1\).  Before arrival \(\ArrivalIdx\), let
\(\LoadTime{\OfflineIdx}{\ArrivalIdx-1}\in[0,1]\) denote the current load on offline node
\(\OfflineIdx\).  Let \(F:[0,1]\to\RR\) be differentiable and convex with \(F(0)=0\), and write
\(f:=F'\).
When arrival \(\ArrivalIdx\) occurs, we consider an algorithm that solves
\begin{equation}
\begin{aligned}
    \max_{\{\MatchVar{\ArrivalIdx}{\OfflineIdx}\}_{\OfflineIdx\in\Neighbor{\ArrivalIdx}}}
    \quad
    & \sum_{\OfflineIdx\in\Neighbor{\ArrivalIdx}}
      \MatchVar{\ArrivalIdx}{\OfflineIdx}
      - \sum_{\OfflineIdx\in\Neighbor{\ArrivalIdx}}
      F\bigl(\LoadTime{\OfflineIdx}{\ArrivalIdx-1}+\MatchVar{\ArrivalIdx}{\OfflineIdx}\bigr)
      \\
    \text{s.t.}\quad
    & \sum_{\OfflineIdx\in\Neighbor{\ArrivalIdx}}
      \MatchVar{\ArrivalIdx}{\OfflineIdx} \le 1, \\
    & 0\le \MatchVar{\ArrivalIdx}{\OfflineIdx}
      \le 1-\LoadTime{\OfflineIdx}{\ArrivalIdx-1}
      \qquad \forall \OfflineIdx\in\Neighbor{\ArrivalIdx}.
\end{aligned}
\label{eq:warmup-program}
\end{equation}
The objective is the current matching gain minus a convex penalty for the post-decision load. This program is the concrete realization of Steps~1--2 of the recipe.  The state is the load vector (i.e., current allocation level) $(\LoadTime{\OfflineIdx}{\ArrivalIdx-1})_{\OfflineIdx}$, and the regularizer $F$ penalizes the post-decision load---the formal version of the ``greediness versus hedging'' tradeoff. We now carry out Steps~3--5 in detail.

\smallskip
\xhdr{Lagrangian, KKT system, and the connection to water-filling.}
Introduce a Lagrange multiplier \(\DualOn{\ArrivalIdx}\ge 0\) for the arrival constraint,
multipliers \(\LambdaVar{\OfflineIdx}\ge 0\) for the offline capacity constraints, and
multipliers \(\ThetaVar{\ArrivalIdx}{\OfflineIdx}\ge 0\) for nonnegativity.
The Lagrangian of \eqref{eq:warmup-program} is
\begin{align}
L_{\ArrivalIdx}(x,\alpha,\lambda,\theta)
={}&
\sum_{\OfflineIdx\in\Neighbor{\ArrivalIdx}} \MatchVar{\ArrivalIdx}{\OfflineIdx}
-
\sum_{\OfflineIdx\in\Neighbor{\ArrivalIdx}}
      F\bigl(\LoadTime{\OfflineIdx}{\ArrivalIdx-1}+\MatchVar{\ArrivalIdx}{\OfflineIdx}\bigr)
\nonumber\\
&
+ \DualOn{\ArrivalIdx}\Bigl(1-
\sum_{\OfflineIdx\in\Neighbor{\ArrivalIdx}} \MatchVar{\ArrivalIdx}{\OfflineIdx}\Bigr)
+ \sum_{\OfflineIdx\in\Neighbor{\ArrivalIdx}}
\LambdaVar{\OfflineIdx}\Bigl(1-\LoadTime{\OfflineIdx}{\ArrivalIdx-1}-\MatchVar{\ArrivalIdx}{\OfflineIdx}\Bigr)
+ \sum_{\OfflineIdx\in\Neighbor{\ArrivalIdx}}
\ThetaVar{\ArrivalIdx}{\OfflineIdx}\,\MatchVar{\ArrivalIdx}{\OfflineIdx}~.
\label{eq:warmup-lagrangian}
\end{align}
Let \(\MatchVar{\ArrivalIdx}{\OfflineIdx}^{\ast}\) denote the optimizer.  The KKT stationarity
conditions are
\begin{equation}
1
-
\RegSlope{\LoadTime{\OfflineIdx}{\ArrivalIdx-1}+\MatchVar{\ArrivalIdx}{\OfflineIdx}^{\ast}}
-
\DualOn{\ArrivalIdx}^{\ast}
-
\LambdaVar{\OfflineIdx}^{\ast}
+
\ThetaVar{\ArrivalIdx}{\OfflineIdx}^{\ast}
=
0
\qquad
\forall \OfflineIdx\in\Neighbor{\ArrivalIdx},
\label{eq:warmup-kkt-1}
\end{equation}
with complementary-slackness conditions
\begin{align}
&(\textit{arrival unsaturated})&\sum_{\OfflineIdx\in\Neighbor{\ArrivalIdx}}
\MatchVar{\ArrivalIdx}{\OfflineIdx}^{\ast}<1
&\implies \DualOn{\ArrivalIdx}^{\ast}=0,
\label{eq:warmup-kkt-3}\\
&(\textit{slack capacity})&\LoadTime{\OfflineIdx}{\ArrivalIdx-1}+\MatchVar{\ArrivalIdx}{\OfflineIdx}^{\ast}<1
&\implies \LambdaVar{\OfflineIdx}^{\ast}=0,
\label{eq:warmup-kkt-2}\\
&(\textit{used edge})&\MatchVar{\ArrivalIdx}{\OfflineIdx}^{\ast}>0
&\implies \ThetaVar{\ArrivalIdx}{\OfflineIdx}^{\ast}=0.
\label{eq:warmup-kkt-4}
\end{align}
Equation \eqref{eq:warmup-kkt-1} already reveals that, for \emph{any} strictly convex regularizer~$F$, the optimizer is a water-filling rule---equivalently, unweighted special case of the BALANCE algorithm~\citep{kalyanasundaram2000optimal}. Indeed, any used and unsaturated offline vertex~$\OfflineIdx$ has $\LambdaVar{\OfflineIdx}^{\ast}=\ThetaVar{\ArrivalIdx}{\OfflineIdx}^{\ast}=0$, so \eqref{eq:warmup-kkt-1} yields $f(\LoadTime{\OfflineIdx}{\ArrivalIdx})=1-\DualOn{\ArrivalIdx}^{\ast}$. Since $f$ is strictly increasing, all such vertices must end at the same post-decision load. If instead $\MatchStar{\ArrivalIdx}{\OfflineIdx}=0$ and $\LoadTime{\OfflineIdx}{\ArrivalIdx-1}<1$, then $\LambdaVar{\OfflineIdx}^{\ast}=0$ and \eqref{eq:warmup-kkt-1} implies $\DualOn{\ArrivalIdx}^{\ast}\ge 1-f(\LoadTime{\OfflineIdx}{\ArrivalIdx-1})$, so any unused feasible neighbor already lies above the common water level. Thus the optimizer raises the smallest feasible loads first, stopping when one unit of mass has been assigned or some neighbors saturate.\footnote{\label{footnote:first-order}In fact, the continuous water-filling process is not a combinatorial algorithms; in fact, it is equivalent to running a particular first-order method (coordinate ascent~\citep{wright2015coordinate}) to solve the arrival-wise concave program.} Operationally, one does not need to solve \eqref{eq:warmup-program} from scratch, as its optimizer can be implemented directly by the simple water-filling procedure.

In the unweighted case, the particular choice of regularizer does not affect the structural form of the algorithm: it is always water-filling (as suggested bt the stationarity KKT condition). The choice of $F$ does, however, matter for the analysis. Later we show that the exponential regularizer is the canonical choice that closes the dual-fitting argument in the unweighted case. The choice of $F$ will also play an important role in the vertex-weighted extension developed later; indeed, in that setting the exponential regularizer is the unique choice of $F$ that yields the optimal competitive algorithm.

\smallskip
\xhdr{Online dual construction \& exact primal-dual accounting.}
The offline dual LP is \eqref{eq:offline-dual-basic-unweighted}. Guided by the KKT multipliers, we define the fitted dual certificate online rather than guessing it after the fact. Initialize \(\DualOnHat{\ArrivalIdx}=0\) and \(\DualOffHat{\OfflineIdx}=0\). When arrival~\(\ArrivalIdx\) is processed, set
\begin{equation}
\label{eq:warmup-dual-update}
    \DualOnHat{\ArrivalIdx}:=\DualOn{\ArrivalIdx}^{\ast},
    \qquad
    \BetaIncTime{\OfflineIdx}{\ArrivalIdx}
    :=
    \MatchVar{\ArrivalIdx}{\OfflineIdx}^{\ast}
    \bigl(1-\DualOn{\ArrivalIdx}^{\ast}\bigr),
    \qquad
    \DualOffHat{\OfflineIdx}
    \leftarrow
    \DualOffHat{\OfflineIdx}+\BetaIncTime{\OfflineIdx}{\ArrivalIdx}.
\end{equation}
This construction is online: once arrival~\(\ArrivalIdx\) has been processed, the value of \(\DualOnHat{\ArrivalIdx}\) is fixed permanently, while each \(\DualOffHat{\OfflineIdx}\) continues to accumulate increments according to \eqref{eq:warmup-dual-update}.

\begin{lemma}[Primal increment equals dual increment]
\label{lem:warmup-primal-equals-dual}
For every arrival \(\ArrivalIdx\),
\begin{equation}
\label{eq:warmup-primal-equals-dual}
    \DualOnHat{\ArrivalIdx}
    + \sum_{\OfflineIdx\in\Neighbor{\ArrivalIdx}}
      \BetaIncTime{\OfflineIdx}{\ArrivalIdx}
    =
    \sum_{\OfflineIdx\in\Neighbor{\ArrivalIdx}}
      \MatchVar{\ArrivalIdx}{\OfflineIdx}^{\ast}.
\end{equation}
Consequently, after summing over all arrivals $t$, the fitted dual objective equals the primal value (i.e., fractional matching size) of the online algorithm.
\end{lemma}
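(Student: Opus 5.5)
The plan is a direct algebraic computation from the update rule \eqref{eq:warmup-dual-update}, with complementary slackness \eqref{eq:warmup-kkt-3} closing the one nontrivial case. Substituting the definitions, the left-hand side of \eqref{eq:warmup-primal-equals-dual} equals
\[
\DualOn{\ArrivalIdx}^{\ast} + \bigl(1-\DualOn{\ArrivalIdx}^{\ast}\bigr)\sum_{\OfflineIdx\in\Neighbor{\ArrivalIdx}} \MatchStar{\ArrivalIdx}{\OfflineIdx}
= \sum_{\OfflineIdx\in\Neighbor{\ArrivalIdx}} \MatchStar{\ArrivalIdx}{\OfflineIdx} + \DualOn{\ArrivalIdx}^{\ast}\Bigl(1-\sum_{\OfflineIdx\in\Neighbor{\ArrivalIdx}} \MatchStar{\ArrivalIdx}{\OfflineIdx}\Bigr).
\]
It therefore suffices to show that the product $\DualOn{\ArrivalIdx}^{\ast}\bigl(1-\sum_{\OfflineIdx}\MatchStar{\ArrivalIdx}{\OfflineIdx}\bigr)$ vanishes.

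I would split into two cases according to whether the arrival constraint is tight. If $\sum_{\OfflineIdx}\MatchStar{\ArrivalIdx}{\OfflineIdx}=1$, the second factor is zero. If $\sum_{\OfflineIdx}\MatchStar{\ArrivalIdx}{\OfflineIdx}<1$, then \eqref{eq:warmup-kkt-3} gives $\DualOn{\ArrivalIdx}^{\ast}=0$. In both cases the product is zero, and the per-arrival identity follows.

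The only point needing care is that the KKT multipliers exist and satisfy complementary slackness. The program \eqref{eq:warmup-program} has a concave objective ($F$ is convex) and linear constraints, so linear-constraint qualification holds. Hence an optimizer $\MatchStar{\ArrivalIdx}{\cdot}$, which exists by compactness, admits multipliers satisfying \eqref{eq:warmup-kkt-1}--\eqref{eq:warmup-kkt-4}. I will take these as the multipliers used in the dual update. If $\DualOn{\ArrivalIdx}^{\ast}$ is not unique, any valid choice works, since the argument uses only complementary slackness.

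For the consequence, I would sum the identity over $\ArrivalIdx=1,\dots,T$. Each $\DualOffHat{\OfflineIdx}$ starts at $0$ and ends as $\sum_{\ArrivalIdx}\BetaIncTime{\OfflineIdx}{\ArrivalIdx}$, where increments from arrivals with $\OfflineIdx\notin\Neighbor{\ArrivalIdx}$ are zero because $\MatchStar{\ArrivalIdx}{\OfflineIdx}=0$ there. Exchanging the order of summation then gives $\sum_{\ArrivalIdx}\DualOnHat{\ArrivalIdx}+\sum_{\OfflineIdx}\DualOffHat{\OfflineIdx}=\sum_{\ArrivalIdx,\OfflineIdx}\MatchStar{\ArrivalIdx}{\OfflineIdx}$, which is the fractional matching size. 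I do not expect any step to be a real obstacle.
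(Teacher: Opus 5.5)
Your proposal is correct and follows the paper's proof: substitute the update rule \eqref{eq:warmup-dual-update}, regroup into $\sum_{\OfflineIdx}\MatchStar{\ArrivalIdx}{\OfflineIdx}+\DualOn{\ArrivalIdx}^{\ast}\bigl(1-\sum_{\OfflineIdx}\MatchStar{\ArrivalIdx}{\OfflineIdx}\bigr)$, and kill the second term by complementary slackness \eqref{eq:warmup-kkt-3}. Your added remarks also hold and go beyond what the paper writes out: multipliers exist because the constraints are linear, and the summation over arrivals yields the global identity.
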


\begin{proof}{Proof.}
By definition,
\begin{align*}
    \DualOnHat{\ArrivalIdx}
    + \sum_{\OfflineIdx\in\Neighbor{\ArrivalIdx}} \BetaIncTime{\OfflineIdx}{\ArrivalIdx}
    =
    \DualOn{\ArrivalIdx}^{\ast}
    + \sum_{\OfflineIdx\in\Neighbor{\ArrivalIdx}}
      \MatchVar{\ArrivalIdx}{\OfflineIdx}^{\ast}
      \bigl(1-\DualOn{\ArrivalIdx}^{\ast}\bigr) 
    =
    \sum_{\OfflineIdx\in\Neighbor{\ArrivalIdx}}
      \MatchVar{\ArrivalIdx}{\OfflineIdx}^{\ast}
    + \DualOn{\ArrivalIdx}^{\ast}
      \Bigl(1-
      \sum_{\OfflineIdx\in\Neighbor{\ArrivalIdx}}
      \MatchVar{\ArrivalIdx}{\OfflineIdx}^{\ast}\Bigr).
\end{align*}
The second term vanishes by \eqref{eq:warmup-kkt-3}.  This proves
\eqref{eq:warmup-primal-equals-dual}.\qed
\end{proof}

\smallskip
\xhdr{Choosing the regularizer \& approximate dual feasibility.}
The remaining task is to verify approximate dual feasibility. Our goal is to show that
$\DualOnHat{\ArrivalIdx}+\DualOffHat{\OfflineIdx}\ge \Gamma$ for all edges \((\ArrivalIdx,\OfflineIdx)\in\EdgeSet\),
where $\Gamma$ is the target competitive ratio, to be determined shortly. The key step is the following lemma, which lower-bounds the fitted offline dual variable in terms of the current load.

\begin{lemma}[Lower bound on the offline dual]
\label{lem:warmup-beta-lower}
For every offline node \(\OfflineIdx\) and every time \(\ArrivalIdx\),
\begin{equation}
\label{eq:warmup-beta-lower}
    \DualOffHat{\OfflineIdx}
    \ge
    F\bigl(\LoadTime{\OfflineIdx}{\ArrivalIdx}\bigr).
\end{equation}
\end{lemma}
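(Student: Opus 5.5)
We prove \eqref{eq:warmup-beta-lower} by induction over arrivals, with a per-step telescoping bound. At time $0$ both sides vanish, since $\DualOffHat{\OfflineIdx}=0$ and $F(\LoadTime{\OfflineIdx}{0})=F(0)=0$. Because $\LoadTime{\OfflineIdx}{\ArrivalIdx}=\LoadTime{\OfflineIdx}{\ArrivalIdx-1}+\MatchStar{\ArrivalIdx}{\OfflineIdx}$, it suffices to show that the increment at each arrival is at least the increase of $F$:
\[
\BetaIncTime{\OfflineIdx}{\ArrivalIdx}=\MatchStar{\ArrivalIdx}{\OfflineIdx}\bigl(1-\DualOn{\ArrivalIdx}^{\ast}\bigr)\;\ge\; F\bigl(\LoadTime{\OfflineIdx}{\ArrivalIdx}\bigr)-F\bigl(\LoadTime{\OfflineIdx}{\ArrivalIdx-1}\bigr).
\]
Summing these inequalities over arrivals then gives the lemma at every time $\ArrivalIdx$.

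If $\MatchStar{\ArrivalIdx}{\OfflineIdx}=0$, both sides are zero. This also covers offline nodes outside $\Neighbor{\ArrivalIdx}$, whose load and dual are unchanged.

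If $\MatchStar{\ArrivalIdx}{\OfflineIdx}>0$, condition \eqref{eq:warmup-kkt-4} gives $\ThetaVar{\ArrivalIdx}{\OfflineIdx}^{\ast}=0$. Stationarity \eqref{eq:warmup-kkt-1} then reads
\[
1-\DualOn{\ArrivalIdx}^{\ast}=\RegSlope{\LoadTime{\OfflineIdx}{\ArrivalIdx}}+\LambdaVar{\OfflineIdx}^{\ast}\ge \RegSlope{\LoadTime{\OfflineIdx}{\ArrivalIdx}},
\]
using $\LambdaVar{\OfflineIdx}^{\ast}\ge0$. Since $F$ is convex, $f$ is nondecreasing, and therefore
\[
F\bigl(\LoadTime{\OfflineIdx}{\ArrivalIdx}\bigr)-F\bigl(\LoadTime{\OfflineIdx}{\ArrivalIdx-1}\bigr)=\int_{\LoadTime{\OfflineIdx}{\ArrivalIdx-1}}^{\LoadTime{\OfflineIdx}{\ArrivalIdx}} f(u)\,du\le \MatchStar{\ArrivalIdx}{\OfflineIdx}\,\RegSlope{\LoadTime{\OfflineIdx}{\ArrivalIdx}}.
\]
Equivalently, the first-order convexity inequality at the post-decision point gives the same bound without integration. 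Multiplying the stationarity bound by $\MatchStar{\ArrivalIdx}{\OfflineIdx}>0$ and chaining the two displays yields the per-step inequality.

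The only delicate step is the sign of $\LambdaVar{\OfflineIdx}^{\ast}$ when $\OfflineIdx$ saturates. If it becomes full, $\LambdaVar{\OfflineIdx}^{\ast}$ may be positive. This only makes the increment $\MatchStar{\ArrivalIdx}{\OfflineIdx}(1-\DualOn{\ArrivalIdx}^{\ast})$ larger than needed, so the inequality still holds in the right direction.

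The argument evaluates the slope at the post-decision load $\LoadTime{\OfflineIdx}{\ArrivalIdx}$, where stationarity is stated, rather than at the pre-decision load. This is precisely why the argument needs convexity of $F$.
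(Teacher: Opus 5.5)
Your proof is correct and follows the paper's argument essentially step for step. Stationarity \eqref{eq:warmup-kkt-1} with $\theta_{tj}^*=0$ on used edges and $\lambda_j^*\ge 0$ gives $1-\alpha_t^*\ge f(y_j^{(t)})$, and the convexity inequality at the post-decision load then yields the per-arrival bound, which telescopes from $F(0)=0$.
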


\begin{proof}{Proof.}
For every earlier arrival \(s\le \ArrivalIdx\) with
\(\MatchVar{s}{\OfflineIdx}^{\ast}>0\), condition
\eqref{eq:warmup-kkt-1} together with \eqref{eq:warmup-kkt-4} gives

\[
    1-\DualOn{s}^{\ast}=\RegSlope{\LoadTime{\OfflineIdx}{s-1}+\MatchVar{s}{\OfflineIdx}^{\ast}}+\LambdaVar{\OfflineIdx}^{\ast}
    \ge
    f(\LoadTime{\OfflineIdx}{s})
\]
Multiplying by
\(\MatchVar{s}{\OfflineIdx}^{\ast}=\LoadTime{\OfflineIdx}{s}-\LoadTime{\OfflineIdx}{s-1}\)
and using convexity of \(F\) (\cref{fig:ec-convexity-argument}),
\begin{equation}
\label{eq:convexity}
    \BetaIncTime{\OfflineIdx}{s}
    \ge
    \bigl(\LoadTime{\OfflineIdx}{s}-\LoadTime{\OfflineIdx}{s-1}\bigr)
    \RegSlope{\LoadTime{\OfflineIdx}{s}}
    \ge
    F\bigl(\LoadTime{\OfflineIdx}{s}\bigr)
    -F\bigl(\LoadTime{\OfflineIdx}{s-1}\bigr).
\end{equation}
Summing over $s=1,\ldots,\ArrivalIdx$ telescopes to \eqref{eq:warmup-beta-lower}.\qed
\end{proof}

\begin{figure}[htb]
  \centering
  \begin{tikzpicture}[x=4.4cm,y=3.2cm]
    \draw[handaxis] (0,-0.0) -- (1.15,0) node[right] {$y$};
    \draw[handaxis] (0,-0.0) -- (0,1.1) node[above] {$F(y)$};

    \draw[handline, tutorialblue, thick]
      plot[smooth, domain=0:1, samples=80] (\x, {exp(\x - 1) - exp(-1)});

    \def\ya{0.35}
    \def\yb{0.6}
    \pgfmathsetmacro{\Fa}{exp((\ya - 1)) - exp(-1)}
    \pgfmathsetmacro{\Fb}{exp((\yb - 1)) - exp(-1)}
    \pgfmathsetmacro{\fb}{exp(\yb - 1)}

    \draw[dashedguide] (\ya, 0) -- (\ya, \Fa);
    \draw[dashedguide] (\yb, 0) -- (\yb, \Fb);
    \draw[dashedguide] (0, \Fa) -- (\ya, \Fa);
    \draw[dashedguide] (0, \Fb) -- (\yb, \Fb);

    \draw[tutorialred, thick]
      ({\ya}, {\Fb + \fb*(\ya - \yb)}) -- ({\yb + 0.15}, {\Fb + \fb*0.15});
    \node[figlabel, tutorialred, right] at ({\yb + 0.02}, {\Fb + \fb*0.15-0.1})
      {slope $= f(y_j^{(s)})$};

    \fill[tutorialblue, opacity=0.15]
      plot[smooth, domain=\ya:\yb, samples=40] (\x, {exp(\x - 1) - exp(-1)})
      -- (\yb, 0) -- (\ya, 0) -- cycle;

    \node[figlabel, below] at (\ya, 0.0) {$y_j^{(s-1)}$};
    \node[figlabel, below] at (\yb, 0.0) {$y_j^{(s)}$};

    \draw[decorate, decoration={brace, mirror, amplitude=4pt}]
      (\ya, -0.2) -- (\yb, -0.2)
      node[midway, below, yshift=-4pt, font=\small] {$x_{sj}^*$};

    \node[font=\small, anchor=west, tutorialblue] at (0.13, 0.75)
      {$x_{sj}^*\,f(y_j^{(s)})\ge F(y_j^{(s)})-F(y_j^{(s-1)})$};
  \end{tikzpicture}
  \vspace{-3mm}
  \caption{The convexity inequality behind \eqref{eq:convexity}.  The increase of
 linear function with slope $f(y_j^{(s)})$ dominates the increase of the convex function $F$ between consecutive load
  levels. }
  \label{fig:ec-convexity-argument}
\end{figure}

The preceding lemma reduces approximate dual feasibility to a one-dimensional functional identity, which also identifies the correct regularizer.

\begin{theorem}[Warm-up guarantee]
\label{thm:warmup-main}
With the exponential regularizer, the algorithm defined by \eqref{eq:warmup-program} is \((1-1/\Expo)\)-competitive for online fractional bipartite matching.
\end{theorem}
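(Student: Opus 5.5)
The plan is to apply \Lemref{lem:generic-dual-fitting} to the LP pair \eqref{eq:offline-primal-basic}--\eqref{eq:offline-dual-basic} with $p_{\ArrivalIdx\OfflineIdx}=1$ and $\VertexWeight{\OfflineIdx}=1$, using the fitted duals \eqref{eq:warmup-dual-update}. Take the exponential regularizer in the form suggested by \cref{fig:ec-convexity-argument}, namely $F(y)=\Expo^{y-1}-\Expo^{-1}$. This choice gives $f(y)=\Expo^{y-1}$, $F(0)=0$, and $F$ strictly convex. Nonnegativity of $\DualOnHat{\ArrivalIdx}=\DualOn{\ArrivalIdx}^{\ast}$ holds automatically. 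The increments $\BetaIncTime{\OfflineIdx}{\ArrivalIdx}$ are nonnegative as soon as we check $\DualOn{\ArrivalIdx}^{\ast}\le 1$ whenever some $\MatchStar{\ArrivalIdx}{\OfflineIdx}>0$. This follows from \eqref{eq:warmup-kkt-1}: $1-\DualOn{\ArrivalIdx}^{\ast}=f(\cdot)+\LambdaVar{\OfflineIdx}^{\ast}\ge 0$. The first condition of \Lemref{lem:generic-dual-fitting} holds with equality by \Lemref{lem:warmup-primal-equals-dual}, summed over arrivals. It therefore remains to show $\DualOnHat{\ArrivalIdx}+\DualOffHat{\OfflineIdx}\ge 1-1/\Expo$ for every edge $(\ArrivalIdx,\OfflineIdx)$, where $\DualOffHat{\OfflineIdx}$ denotes the final value.

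Fix an edge $(\ArrivalIdx,\OfflineIdx)$. Because $\DualOffHat{\OfflineIdx}$ only increases over time, \Lemref{lem:warmup-beta-lower} at time $\ArrivalIdx$ gives $\DualOffHat{\OfflineIdx}\ge F(\LoadTime{\OfflineIdx}{\ArrivalIdx})$. Next, lower-bound $\DualOn{\ArrivalIdx}^{\ast}$ via stationarity, splitting into cases.
\begin{itemize}
\item If $\LoadTime{\OfflineIdx}{\ArrivalIdx}<1$, then $\LambdaVar{\OfflineIdx}^{\ast}=0$ by \eqref{eq:warmup-kkt-2}, and \eqref{eq:warmup-kkt-1} with $\ThetaVar{\ArrivalIdx}{\OfflineIdx}^{\ast}\ge0$ gives $\DualOn{\ArrivalIdx}^{\ast}\ge 1-f(\LoadTime{\OfflineIdx}{\ArrivalIdx})$.
\item If $\LoadTime{\OfflineIdx}{\ArrivalIdx}=1$, we only use $\DualOn{\ArrivalIdx}^{\ast}\ge 0$ together with $\DualOffHat{\OfflineIdx}\ge F(1)=1-\Expo^{-1}$.
\end{itemize}
The second case is immediate. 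In the first case, with $y:=\LoadTime{\OfflineIdx}{\ArrivalIdx}$,
\[
\DualOnHat{\ArrivalIdx}+\DualOffHat{\OfflineIdx}\ge 1-f(y)+F(y)=1-\Expo^{y-1}+\Expo^{y-1}-\Expo^{-1}=1-1/\Expo .
\]
This is exactly the scalar identity $1-f(y)+F(y)=\CompRatio$ anticipated in Step~5 of the recipe. Its solution $F'-F=1-\CompRatio$ with $F(0)=0$ forces the exponential form, and the boundary requirement $F(1)\ge\CompRatio$ at a saturated vertex pins down $\CompRatio=1-1/\Expo$.

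The step that needs the most care is the KKT bookkeeping, for two reasons. First, $F$ must be differentiable on the closed interval $[0,1]$ (it is), and the convex program \eqref{eq:warmup-program} has linear constraints, so KKT multipliers exist and the conditions \eqref{eq:warmup-kkt-1}--\eqref{eq:warmup-kkt-4} are valid at the optimum. Second, the case split must correctly use the post-decision load $\LoadTime{\OfflineIdx}{\ArrivalIdx}$ rather than the pre-decision load, including edges with $\MatchStar{\ArrivalIdx}{\OfflineIdx}=0$. Once both dual-fitting conditions are verified, \Lemref{lem:generic-dual-fitting} gives $(1-1/\Expo)$-competitiveness.
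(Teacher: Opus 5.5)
Your proposal is correct and follows essentially the same route as the paper. It uses the fitted duals \eqref{eq:warmup-dual-update}, exact accounting via \Lemref{lem:warmup-primal-equals-dual}, and the telescoping bound of \Lemref{lem:warmup-beta-lower}, with the same case split on the post-decision load closed by $1-f(y)+F(y)=F(1)$; you fix $F(y)=\Expo^{y-1}-\Expo^{-1}$ up front and add harmless checks (nonnegativity and monotonicity of $\widehat\beta_j$, validity of the KKT conditions) that the paper leaves implicit, whereas the paper derives $F$ by differentiating the functional identity.
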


\begin{proof}{Proof.}
Fix an edge \((\ArrivalIdx,\OfflineIdx)\in\EdgeSet\). If \(\LoadTime{\OfflineIdx}{\ArrivalIdx}=1\), then \Lemref{lem:warmup-beta-lower} gives \(\DualOffHat{\OfflineIdx}\ge F(1)\). If instead \(\LoadTime{\OfflineIdx}{\ArrivalIdx}<1\), then \eqref{eq:warmup-kkt-2} implies \(\LambdaVar{\OfflineIdx}^{\ast}=0\), and \eqref{eq:warmup-kkt-1}, together with \(\ThetaVar{\ArrivalIdx}{\OfflineIdx}^{\ast}\ge 0\), yields
\[
    \DualOnHat{\ArrivalIdx}
    = \DualOn{\ArrivalIdx}^{\ast}
    \ge 1-f\bigl(\LoadTime{\OfflineIdx}{\ArrivalIdx}\bigr).
\]
Combining this with \eqref{eq:warmup-beta-lower}, it is natural to choose \(F\) so that the two cases coincide, namely
\begin{equation}
\label{eq:warmup-functional-identity}
    F(1)
    =
    1-f(y)+F(y)
    \qquad \forall y\in[0,1].
\end{equation}
Differentiating \eqref{eq:warmup-functional-identity} and using \(F'=f\) gives \(f'(y)=f(y)\) on \([0,1]\), so \(f(y)=C\Expo^{y}\) for some constant \(C\). Evaluating \eqref{eq:warmup-functional-identity} at \(y=1\) yields \(f(1)=1\), hence \(C=\Expo^{-1}\). Integrating and using \(F(0)=0\), we obtain
\begin{equation}
\label{eq:canonical-regularizer}
    f(y)=\Expo^{y-1},
    \qquad
    F(y)=\Expo^{y-1}-\Expo^{-1},
    \qquad
    F(1)=1-\tfrac{1}{\Expo}.
\end{equation}
With this choice, if \(\LoadTime{\OfflineIdx}{\ArrivalIdx}=1\), then \(\DualOnHat{\ArrivalIdx}+\DualOffHat{\OfflineIdx}\ge F(1)\); if \(\LoadTime{\OfflineIdx}{\ArrivalIdx}<1\), then
\[
    \DualOnHat{\ArrivalIdx}+\DualOffHat{\OfflineIdx}
    \ge 1-f\bigl(\LoadTime{\OfflineIdx}{\ArrivalIdx}\bigr)
      +F\bigl(\LoadTime{\OfflineIdx}{\ArrivalIdx}\bigr)
    = F(1)
    = 1-1/\Expo,
\]
where the equality uses \eqref{eq:warmup-functional-identity}. Therefore, for every edge \((\ArrivalIdx,\OfflineIdx)\in\EdgeSet\),
\[
    \DualOnHat{\ArrivalIdx}+\DualOffHat{\OfflineIdx}
    \ge 1-1/\Expo.
\]
By \Lemref{lem:warmup-primal-equals-dual}, the fitted dual objective equals the value of the online algorithm. Hence \((\widehat\alpha,\widehat\beta)/(1-1/\Expo)\) is feasible for the offline dual LP \eqref{eq:offline-dual-basic-unweighted}, and \Lemref{lem:generic-dual-fitting} completes the proof.\qed
\end{proof}

\subsubsection{Vertex-weighted extension}
\label{subsec:matching-weighted}

We now return to the vertex-weighted model from \Secref{subsec:model-vertexweighted}. The point of the warm-up is that almost all of the analysis is already in place. The state remains the load vector $(\LoadTime{\OfflineIdx}{\ArrivalIdx-1})_{\OfflineIdx}$, and the only new feature is that one unit of mass assigned to $\OfflineIdx$ is now worth $\VertexWeight{\OfflineIdx}$ rather than~$1$. Accordingly, our modified algorithm solves this concave program at each arrival~$\ArrivalIdx$:
\begin{equation}
\begin{aligned}
    \max_{\{\MatchVar{\ArrivalIdx}{\OfflineIdx}\}_{\OfflineIdx\in\Neighbor{\ArrivalIdx}}}
    \quad
    & \sum_{\OfflineIdx\in\Neighbor{\ArrivalIdx}}
      \VertexWeight{\OfflineIdx}\,\MatchVar{\ArrivalIdx}{\OfflineIdx}
      - \sum_{\OfflineIdx\in\Neighbor{\ArrivalIdx}}
      \VertexWeight{\OfflineIdx}
      F\bigl(\LoadTime{\OfflineIdx}{\ArrivalIdx-1}+\MatchVar{\ArrivalIdx}{\OfflineIdx}\bigr)
      \\
    \text{s.t.}\quad
    & \sum_{\OfflineIdx\in\Neighbor{\ArrivalIdx}}
      \MatchVar{\ArrivalIdx}{\OfflineIdx} \le 1, \qquad
     0\le \MatchVar{\ArrivalIdx}{\OfflineIdx}
      \le 1-\LoadTime{\OfflineIdx}{\ArrivalIdx-1}
      \quad \forall \OfflineIdx\in\Neighbor{\ArrivalIdx}.
\end{aligned}
\label{eq:weighted-program}
\end{equation}
Notably, the optimizer of this concave program coincides exactly with the classical BALANCE algorithm~\citep{aggarwal2011vertex,kalyanasundaram2000optimal}, as can be seen from the KKT conditions. We leave this verification as an exercise for the reader; see also \Cref{footnote:first-order}. Compared with the unweighted setting in \Cref{subsec:matching-warmup}, the only KKT condition that changes is stationarity:
\begin{equation}
\label{eq:weighted-kkt}
    \VertexWeight{\OfflineIdx}
    \Bigl(1-f\bigl(\LoadTime{\OfflineIdx}{\ArrivalIdx-1}+\MatchStar{\ArrivalIdx}{\OfflineIdx}\bigr)\Bigr)
    - \DualOn{\ArrivalIdx}^{\ast}
    - \LambdaVar{\OfflineIdx}^{\ast}
    + \ThetaVar{\ArrivalIdx}{\OfflineIdx}^{\ast}
    =0
    \qquad
    \forall \OfflineIdx\in\Neighbor{\ArrivalIdx},
\end{equation}
while \eqref{eq:warmup-kkt-2}--\eqref{eq:warmup-kkt-4} remain unchanged.  The fitted dual update is
\begin{equation}
\label{eq:weighted-dual-update}
    \DualOnHat{\ArrivalIdx}:=\DualOn{\ArrivalIdx}^{\ast},
    \qquad
    \BetaIncTime{\OfflineIdx}{\ArrivalIdx}
    := \MatchStar{\ArrivalIdx}{\OfflineIdx}
       \bigl(\VertexWeight{\OfflineIdx}-\DualOn{\ArrivalIdx}^{\ast}\bigr),
    \qquad
    \DualOffHat{\OfflineIdx}
    \leftarrow
    \DualOffHat{\OfflineIdx}+\BetaIncTime{\OfflineIdx}{\ArrivalIdx}.
\end{equation}

\begin{proposition}[Vertex-weighted guarantee]
\label{prop:weighted-main}
With the exponential regularizer \eqref{eq:canonical-regularizer}, the algorithm \eqref{eq:weighted-program} is $(1-1/\Expo)$-competitive for fractional vertex-weighted online matching.
\end{proposition}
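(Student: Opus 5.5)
The proof follows the warm-up template of \Thmref{thm:warmup-main}, with the dual update \eqref{eq:weighted-dual-update} and stationarity \eqref{eq:weighted-kkt} in place of their unweighted versions. We apply \Lemref{lem:generic-dual-fitting} to the LP pair \eqref{eq:offline-primal-basic}--\eqref{eq:offline-dual-basic} with $p_{tj}=1$. The dual constraint is then $\DualOn{\ArrivalIdx}+\DualOff{\OfflineIdx}\ge \VertexWeight{\OfflineIdx}$, so it suffices to show two things: the fitted dual objective equals the algorithm's reward, and $\DualOnHat{\ArrivalIdx}+\DualOffHat{\OfflineIdx}\ge (1-1/\Expo)\VertexWeight{\OfflineIdx}$ for every edge.

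\emph{Step 1 (exact accounting).} We compute
$\DualOnHat{\ArrivalIdx}+\sum_{\OfflineIdx}\BetaIncTime{\OfflineIdx}{\ArrivalIdx}=\sum_{\OfflineIdx}\VertexWeight{\OfflineIdx}\MatchStar{\ArrivalIdx}{\OfflineIdx}+\DualOn{\ArrivalIdx}^{\ast}\bigl(1-\sum_{\OfflineIdx}\MatchStar{\ArrivalIdx}{\OfflineIdx}\bigr)$.
The last term vanishes by \eqref{eq:warmup-kkt-3}, exactly as in \Lemref{lem:warmup-primal-equals-dual}. Summing over arrivals gives equality with the weighted reward $R^{\ALG}$. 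We also check nonnegativity of the certificate: $\DualOn{\ArrivalIdx}^{\ast}\ge0$ by construction. For $\BetaIncTime{\OfflineIdx}{\ArrivalIdx}$, whenever $\MatchStar{\ArrivalIdx}{\OfflineIdx}>0$, stationarity gives $\VertexWeight{\OfflineIdx}-\DualOn{\ArrivalIdx}^{\ast}=\VertexWeight{\OfflineIdx}f(\cdot)+\LambdaVar{\OfflineIdx}^{\ast}\ge0$, since $f=\Expo^{y-1}>0$.

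\emph{Step 2 (offline dual lower bound).} We prove the weighted analogue of \Lemref{lem:warmup-beta-lower}, namely $\DualOffHat{\OfflineIdx}\ge \VertexWeight{\OfflineIdx}F(\LoadTime{\OfflineIdx}{\ArrivalIdx})$. For each $s$ with $\MatchStar{s}{\OfflineIdx}>0$, \eqref{eq:weighted-kkt} and \eqref{eq:warmup-kkt-4} give
$\VertexWeight{\OfflineIdx}-\DualOn{s}^{\ast}=\VertexWeight{\OfflineIdx}f(\LoadTime{\OfflineIdx}{s})+\LambdaVar{\OfflineIdx}^{\ast}\ge \VertexWeight{\OfflineIdx}f(\LoadTime{\OfflineIdx}{s})$.
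We multiply by $\MatchStar{s}{\OfflineIdx}=\LoadTime{\OfflineIdx}{s}-\LoadTime{\OfflineIdx}{s-1}$ and use convexity as in \eqref{eq:convexity} to get $\BetaIncTime{\OfflineIdx}{s}\ge \VertexWeight{\OfflineIdx}\bigl(F(\LoadTime{\OfflineIdx}{s})-F(\LoadTime{\OfflineIdx}{s-1})\bigr)$. Steps with zero mass contribute zero to both sides. Telescoping over $s$ gives the bound.

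\emph{Step 3 (approximate feasibility).} Fix an edge $(\ArrivalIdx,\OfflineIdx)$, and note that $\DualOffHat{\OfflineIdx}$ only grows, so bounding its value after time $\ArrivalIdx$ suffices. If $\LoadTime{\OfflineIdx}{\ArrivalIdx}=1$, Step 2 gives $\DualOffHat{\OfflineIdx}\ge \VertexWeight{\OfflineIdx}F(1)=(1-1/\Expo)\VertexWeight{\OfflineIdx}$. Otherwise $\LambdaVar{\OfflineIdx}^{\ast}=0$ by \eqref{eq:warmup-kkt-2}, and \eqref{eq:weighted-kkt} with $\ThetaVar{\ArrivalIdx}{\OfflineIdx}^{\ast}\ge0$ gives $\DualOn{\ArrivalIdx}^{\ast}\ge \VertexWeight{\OfflineIdx}\bigl(1-f(\LoadTime{\OfflineIdx}{\ArrivalIdx})\bigr)$. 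Adding Step 2 yields $\VertexWeight{\OfflineIdx}\bigl(1-f(y)+F(y)\bigr)=\VertexWeight{\OfflineIdx}F(1)$ by the identity \eqref{eq:warmup-functional-identity}, which the exponential regularizer \eqref{eq:canonical-regularizer} satisfies. \Lemref{lem:generic-dual-fitting} then concludes the proof.

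The main obstacle is the sign and weight bookkeeping in Step 3. The stationarity condition is scaled by $\VertexWeight{\OfflineIdx}$ on the $f$ term but not on $\DualOn{\ArrivalIdx}^{\ast}$, so the regularizer must be weighted by $\VertexWeight{\OfflineIdx}$ for the scalar identity to appear homogeneously in $\VertexWeight{\OfflineIdx}$. This weighting is exactly how \eqref{eq:weighted-program} is set up, so the warm-up identity transfers verbatim.
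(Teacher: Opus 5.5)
Your proposal is correct and matches the paper's proof step for step. The steps are exact accounting via the arrival-unsaturated complementary slackness, the telescoped bound $\widehat\beta_j \ge r_j F(y_j^{(t)})$ from stationarity plus convexity, and the two-case edge check closed by $1-f(y)+F(y)=1-1/\Expo$. Your explicit verification that each $\beta$-increment is nonnegative, which \Lemref{lem:generic-dual-fitting} requires, is a small detail the paper leaves implicit.
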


\begin{proof}{Proof.}
The exact primal-dual accounting identity carries over: expanding \eqref{eq:weighted-dual-update} and applying \eqref{eq:warmup-kkt-3} gives $\DualOnHat{\ArrivalIdx}+\sum_{\OfflineIdx}\BetaIncTime{\OfflineIdx}{\ArrivalIdx}=\sum_{\OfflineIdx}\VertexWeight{\OfflineIdx}\MatchStar{\ArrivalIdx}{\OfflineIdx}$, so the fitted dual objective equals the primal value.

For the offline-side lower bound, fix $\OfflineIdx$ and an earlier arrival $s\le\ArrivalIdx$ with $\MatchStar{s}{\OfflineIdx}>0$.  From \eqref{eq:weighted-kkt} and \eqref{eq:warmup-kkt-4}, $\VertexWeight{\OfflineIdx}-\DualOn{s}^{\ast}\ge \VertexWeight{\OfflineIdx}f(\LoadTime{\OfflineIdx}{s})$.  Multiplying by $\MatchStar{s}{\OfflineIdx}$ and applying convexity of~$F$ yields $\BetaIncTime{\OfflineIdx}{s}\ge \VertexWeight{\OfflineIdx}(F(\LoadTime{\OfflineIdx}{s})-F(\LoadTime{\OfflineIdx}{s-1}))$.  Summing over~$s$ therefore gives
\begin{equation}
\label{eq:weighted-beta-lower}
    \DualOffHat{\OfflineIdx}
    \ge \VertexWeight{\OfflineIdx}F\bigl(\LoadTime{\OfflineIdx}{\ArrivalIdx}\bigr).
\end{equation}

Now fix an edge $(\ArrivalIdx,\OfflineIdx)\in\EdgeSet$.  If $\LoadTime{\OfflineIdx}{\ArrivalIdx}=1$, then \eqref{eq:weighted-beta-lower} gives $\DualOnHat{\ArrivalIdx}+\DualOffHat{\OfflineIdx}\ge \VertexWeight{\OfflineIdx}F(1)=(1-1/\Expo)\VertexWeight{\OfflineIdx}$.  If $\LoadTime{\OfflineIdx}{\ArrivalIdx}<1$, then $\LambdaVar{\OfflineIdx}^{\ast}=0$ by \eqref{eq:warmup-kkt-2} and \eqref{eq:weighted-kkt} gives $\DualOnHat{\ArrivalIdx}\ge \VertexWeight{\OfflineIdx}(1-f(\LoadTime{\OfflineIdx}{\ArrivalIdx}))$.  Combining with \eqref{eq:weighted-beta-lower} and applying \eqref{eq:warmup-functional-identity},
\[
    \DualOnHat{\ArrivalIdx}+\DualOffHat{\OfflineIdx}
    \ge \VertexWeight{\OfflineIdx}
       \bigl(1-f(\LoadTime{\OfflineIdx}{\ArrivalIdx})
             +F(\LoadTime{\OfflineIdx}{\ArrivalIdx})\bigr)
    = (1-1/\Expo)\VertexWeight{\OfflineIdx}.
\]
Hence $(\widehat\alpha,\widehat\beta)/(1-1/\Expo)$ is feasible for the offline dual LP, and \Lemref{lem:generic-dual-fitting} completes the proof. \qed
\end{proof}

\begin{remark}\label{unweigh-to-weigh}
The vertex-weighted extension illustrates a useful modeling lesson: changing the reward of an
offline vertex by a multiplicative factor does not force a new state description. In this case, the
load-based convex-regularization framework absorbs the weights without altering the underlying
geometry of the argument, which is why the extension remains conceptually mild. This should be
contrasted with the edge-weighted model treated next, where heterogeneity appears at the edge level
and therefore requires a genuinely richer state variable. It turns out that the bookkeeping
changes in these richer models, but the recipe does not.
\end{remark}

% \begin{figure}[t]
%     \centering
%     \begin{tikzpicture}[x=4.4cm,y=3.2cm]
%         \draw[handaxis] (0,0) -- (1.15,0) node[right] {$y$};
%         \draw[handaxis] (0,0) -- (0,0.92);
%         \draw[handline, tutorialblue]
%             plot[smooth] coordinates {(0,0)(0.15,0.05)(0.35,0.13)(0.55,0.28)(0.75,0.52)(1,0.632)};
%         \draw[dashedguide] (1,0) -- (1,0.632);
%         \draw[dashedguide] (0,0.632) -- (1,0.632);
%         \node[figlabel, below] at (1,0) {$1$};
%         \node[figlabel, left] at (0,0.632) {$1{-}1/\Expo$};
%         \node[figlabel, text=tutorialblue, anchor=south west] at (0.43,0.33)
%         {$F(y)=\Expo^{y-1}-\Expo^{-1}$};
%     \end{tikzpicture}
%     \caption{The canonical exponential regularizer.  The quantity \(F(1)=1-1/\Expo\) is exactly
%     the competitive ratio certified by the dual-fitting argument.}
%     \label{fig:regularizer-curve}
% \end{figure}

%% file: tex/free_disposal.tex
\subsection{Application II: Edge-weighted matching with free disposal}
\label{sec:free-disposal}
We now turn to the second core model of Part~I: edge-weighted matching with free disposal. The transition from vertex weights to edge weights is substantive, not cosmetic. In \Secref{sec:matching}, the state of offline vertex~$\OfflineIdx$ is summarized by a single scalar load because every retained allocation unit at~$\OfflineIdx$ has the \emph{same} value $\VertexWeight{\OfflineIdx}$. In the edge-weighted model, by contrast, retained mass might have been created at different reward levels and they are not interchangeable, so the state must record a reward profile (e.g., a density function) rather than a single scalar. This distinction is consequential: without additional structure, even a single offline vertex of capacity one yields no constant competitive online algorithm, since an adversary can reveal a small reward first and an arbitrarily larger reward later.

Free disposal is the structural assumption that removes exactly this obstruction. It allows the algorithm to discard previously accepted low-value mass when a later arrival creates more valuable mass at the same offline vertex, so the online policy may retain only the highest-value mass up to capacity. That is the appropriate benchmark in applications such as display advertising~\citep{feldman2009free}, and it is the assumption that restores tractability in the edge-weighted model.

\subsubsection{Problem setting, notation, and benchmark}
\label{subsec:model-freedisposal}

Formally, the reward of assigning arrival \(\ArrivalIdx\) to vertex \(\OfflineIdx\) is the edge
value \(\EdgeReward{\ArrivalIdx}{\OfflineIdx}\ge 0\).  The offline LP benchmark remains the maximum edge-weighted fractional matching problem.  The new ingredient is the richer state of this allocation problem.

For each offline node \(\OfflineIdx\), the pre-arrival state at time \(\ArrivalIdx\) is described by a
nonnegative reward density
\(\DensityTime{\OfflineIdx}{\ArrivalIdx-1}{\RewardVar}\) on \([0,\infty)\).  Intuitively,
\(\DensityTime{\OfflineIdx}{\ArrivalIdx-1}{\RewardVar}\,\dd\RewardVar\) is the amount of mass
currently retained at reward levels in \([\RewardVar,\RewardVar+\dd\RewardVar]\).  It is often more
convenient to work with the cumulative profile
\begin{equation}
\label{eq:model-free-cumulative}
    \CumTime{\OfflineIdx}{\ArrivalIdx-1}{\RewardVar}
    :=
    \int_{\RewardVar}^{\infty}
    \DensityTime{\OfflineIdx}{\ArrivalIdx-1}{\omega}\,\dd\omega.
\end{equation}
Thus \(\CumTime{\OfflineIdx}{\ArrivalIdx-1}{\RewardVar}\) is the retained mass of vertex
\(\OfflineIdx\) whose reward is at least \(\RewardVar\), and the total allocated mass at node \(\OfflineIdx\) is encoded by
\(\CumTime{\OfflineIdx}{\ArrivalIdx-1}{0}\), which should be no more than the capacity $1$.

When arrival \(\ArrivalIdx\) appears, the algorithm chooses
\begin{enumerate}[leftmargin=1.4em,itemsep=1pt]
    \item new allocation masses \(\NewMass{\ArrivalIdx}{\OfflineIdx}\ge 0\) for
    \(\OfflineIdx\in\Neighbor{\ArrivalIdx}\), satisfying
    \(\sum_{\OfflineIdx\in\Neighbor{\ArrivalIdx}} \NewMass{\ArrivalIdx}{\OfflineIdx}\le 1\), and
    \item a post-cancellation density
    \(\PostDensity{\OfflineIdx}{\RewardVar}\) with
    \(0\le \PostDensity{\OfflineIdx}{\RewardVar}
      \le \DensityTime{\OfflineIdx}{\ArrivalIdx-1}{\RewardVar}\).
\end{enumerate}
The total post-decision mass kept at offline node \(\OfflineIdx\) must satisfy
\begin{equation}
\label{eq:model-free-offline-constraint}
    \NewMass{\ArrivalIdx}{\OfflineIdx}
    + \int_0^{\infty} \PostDensity{\OfflineIdx}{\RewardVar}\,\dd\RewardVar
    \le 1.
\end{equation}
The resulting cumulative state is
\begin{equation}
\label{eq:model-free-state-update}
    \CumTime{\OfflineIdx}{\ArrivalIdx}{\RewardVar}
    :=
    \int_{\RewardVar}^{\infty}
    \PostDensity{\OfflineIdx}{\omega}\,\dd\omega
    + \NewMass{\ArrivalIdx}{\OfflineIdx}
      \ind{\EdgeReward{\ArrivalIdx}{\OfflineIdx}\ge \RewardVar}.
\end{equation}
Equation \eqref{eq:model-free-state-update} says that newly created mass contributes to every
threshold below its reward value, while previously retained mass contributes through the surviving
density \(\PostDensity{j}{\cdot}\).

The offline LP benchmark is the maximum edge-weighted fractional matching problem:

\begin{minipage}[t]{0.49\linewidth}
\textbf{Primal LP}
\begin{equation}
\begin{aligned}[t]
    \max \quad
    & \sum_{(\ArrivalIdx,\OfflineIdx)\in\EdgeSet}
      \EdgeReward{\ArrivalIdx}{\OfflineIdx}\,\MatchVar{\ArrivalIdx}{\OfflineIdx} \\
    \text{s.t.}\quad
    & \sum_{\OfflineIdx:(\ArrivalIdx,\OfflineIdx)\in\EdgeSet}
      \MatchVar{\ArrivalIdx}{\OfflineIdx} \le 1
      && \forall \ArrivalIdx, \\
    & \sum_{\ArrivalIdx:(\ArrivalIdx,\OfflineIdx)\in\EdgeSet}
      \MatchVar{\ArrivalIdx}{\OfflineIdx} \le 1
      && \forall \OfflineIdx, \\
    & \MatchVar{\ArrivalIdx}{\OfflineIdx}\ge 0
      && \forall (\ArrivalIdx,\OfflineIdx)\in\EdgeSet.
\end{aligned}
\label{eq:offline-primal-free}
\end{equation}
\end{minipage}\hfill
\begin{minipage}[t]{0.49\linewidth}
\textbf{Dual LP}
\begin{equation}
\begin{aligned}[t]
    \min \quad
    & \sum_{\ArrivalIdx\in\OnlineVertices}\DualOn{\ArrivalIdx}
      + \sum_{\OfflineIdx\in\OfflineVertices}\DualOff{\OfflineIdx} \\
    \text{s.t.}\quad
    & \DualOn{\ArrivalIdx}+\DualOff{\OfflineIdx}
      \ge \EdgeReward{\ArrivalIdx}{\OfflineIdx}
      && \forall (\ArrivalIdx,\OfflineIdx)\in\EdgeSet, \\
    & \DualOn{\ArrivalIdx},\DualOff{\OfflineIdx}\ge 0
      && \forall \ArrivalIdx,\ \forall \OfflineIdx.
\end{aligned}
\label{eq:offline-dual-free}
\end{equation}
\end{minipage}
\smallskip

\subsubsection{The regularized arrival-wise convex program \& analysis}
\label{subsec:free-program}
Before processing arrival~\(\ArrivalIdx\), the state of offline node~\(\OfflineIdx\) is the
reward density \(\PreDensity{\OfflineIdx}{\RewardVar}\) and its cumulative profile
\(\PreCum{\OfflineIdx}{\RewardVar}=\int_{\RewardVar}^{\infty}
\PreDensity{\OfflineIdx}{\omega}\,\dd\omega\).\footnote{When the current time is clear from the context, we write $y_j(\cdot)$ and $Y_j(\cdot)$ instead of their time-dependent forms $y_j^t(\cdot)$ and $Y_j^t(\cdot)$.} At arrival~\(\ArrivalIdx\), the algorithm
chooses new mass \(\NewMass{\ArrivalIdx}{\OfflineIdx}\ge 0\) and a retained
post-cancellation density \(\PostDensity{\OfflineIdx}{\RewardVar}\le
\PreDensity{\OfflineIdx}{\RewardVar}\). The regularized arrival-wise program is
\begin{equation}
\begin{aligned}
    \max \quad
    & \sum_{\OfflineIdx\in\Neighbor{\ArrivalIdx}}
      \NewMass{\ArrivalIdx}{\OfflineIdx}\,\EdgeReward{\ArrivalIdx}{\OfflineIdx}
      - \sum_{\OfflineIdx\in\OfflineVertices}
        \int_0^{\infty}
        \bigl(\PreDensity{\OfflineIdx}{\RewardVar}-\PostDensity{\OfflineIdx}{\RewardVar}\bigr)
        \RewardVar\,\dd\RewardVar 
      - \sum_{\OfflineIdx\in\OfflineVertices}
        \int_0^{\infty}
        \RegVal{\PostCum{\OfflineIdx}{\RewardVar}
        + \NewMass{\ArrivalIdx}{\OfflineIdx}
          \ind{\EdgeReward{\ArrivalIdx}{\OfflineIdx}\ge \RewardVar}}
        \dd\RewardVar \\
    \text{s.t.}\quad
    & \sum_{\OfflineIdx\in\Neighbor{\ArrivalIdx}}
      \NewMass{\ArrivalIdx}{\OfflineIdx} \le 1, \\
    & \NewMass{\ArrivalIdx}{\OfflineIdx}
      + \int_0^{\infty}\PostDensity{\OfflineIdx}{\RewardVar}\,\dd\RewardVar
      \le 1
      \qquad \forall \OfflineIdx\in\OfflineVertices, \\
    & 0\le \PostDensity{\OfflineIdx}{\RewardVar}
      \le \PreDensity{\OfflineIdx}{\RewardVar}
      \qquad \forall \OfflineIdx\in\OfflineVertices,\ \forall \RewardVar\ge 0, \\
    & \NewMass{\ArrivalIdx}{\OfflineIdx}\ge 0
      \qquad \forall \OfflineIdx\in\OfflineVertices.
\end{aligned}
\label{eq:free-program}
\end{equation}
The first term is the reward of the new assignment, the second subtracts the reward lost through disposal, and the third regularizes the post-decision reward histogram.  This is the exact analogue of the matching program from \Secref{sec:matching}; only the state of each vertex $j$ has changed from a scalar load to a reward-indexed density/cumulative profile.

\smallskip
\xhdr{Lagrangian, KKT conditions, \& dual construction.}
Introduce a multiplier \(\DualOn{\ArrivalIdx}\ge 0\) for the online arrival constraint,
\(\LambdaVar{\OfflineIdx}\ge 0\) for the capacity constraint of each offline vertex,
\(\EtaVar{\OfflineIdx}{\RewardVar}\ge 0\) for the upper bound
\( \PreDensity{\OfflineIdx}{\RewardVar}\) upper bound on retained density $\PostDensity{\OfflineIdx}{\RewardVar}$,
\(\ThetaVar{\ArrivalIdx}{\OfflineIdx}\ge 0\) for nonnegativity of the new mass,
and \(\DeltaVar{\OfflineIdx}{\RewardVar}\ge 0\) for nonnegativity of the retained density.
The Lagrangian is
\begin{align}
L_{\ArrivalIdx}(z,x,\alpha,\lambda,\eta,\theta,\delta)
={}&
\sum_{\OfflineIdx}\NewMass{\ArrivalIdx}{\OfflineIdx}\,\EdgeReward{\ArrivalIdx}{\OfflineIdx}
-
\sum_{\OfflineIdx}\int_0^{\infty}
\bigl(\PreDensity{\OfflineIdx}{\RewardVar}-\PostDensity{\OfflineIdx}{\RewardVar}\bigr)
\RewardVar\,\dd\RewardVar
\nonumber\\
&
-
\sum_{\OfflineIdx}\int_0^{\infty}
\RegVal{\PostCum{\OfflineIdx}{\RewardVar}
+\NewMass{\ArrivalIdx}{\OfflineIdx}
\ind{\EdgeReward{\ArrivalIdx}{\OfflineIdx}\ge \RewardVar}}
\dd\RewardVar
\nonumber\\
&
+ \DualOn{\ArrivalIdx}\Bigl(1-\sum_{\OfflineIdx}\NewMass{\ArrivalIdx}{\OfflineIdx}\Bigr)
+ \sum_{\OfflineIdx}\LambdaVar{\OfflineIdx}
\Bigl(1-\NewMass{\ArrivalIdx}{\OfflineIdx}-\int_0^{\infty}\PostDensity{\OfflineIdx}{\RewardVar}\,\dd\RewardVar\Bigr)
\nonumber\\
&
+ \sum_{\OfflineIdx}\int_0^{\infty}
\EtaVar{\OfflineIdx}{\RewardVar}
\bigl(\PreDensity{\OfflineIdx}{\RewardVar}-\PostDensity{\OfflineIdx}{\RewardVar}\bigr)
\dd\RewardVar
+ \sum_{\OfflineIdx}\ThetaVar{\ArrivalIdx}{\OfflineIdx}\NewMass{\ArrivalIdx}{\OfflineIdx}
\nonumber\\
&
+ \sum_{\OfflineIdx}\int_0^{\infty}
\DeltaVar{\OfflineIdx}{\RewardVar}\PostDensity{\OfflineIdx}{\RewardVar}\,\dd\RewardVar.
\label{eq:free-lagrangian}
\end{align}
Let \((\NewMass{\ArrivalIdx}{\OfflineIdx}^{\ast},\PostDensity{\OfflineIdx}{\RewardVar}^{\ast})\)
denote an optimal solution of the Lagrangian problem.  The stationarity conditions  are
\begin{equation}
\EdgeReward{\ArrivalIdx}{\OfflineIdx}
- \int_0^{\EdgeReward{\ArrivalIdx}{\OfflineIdx}}
  \RegSlope{\CumTime{\OfflineIdx}{\ArrivalIdx}{\RewardVar}}\,\dd\RewardVar
- \DualOn{\ArrivalIdx}^{\ast}
- \LambdaVar{\OfflineIdx}^{\ast}
+ \ThetaVar{\ArrivalIdx}{\OfflineIdx}^{\ast}=0,
\label{eq:free-kkt-z}
\end{equation}
for the new mass (i.e., first-order condition of the gradient with respect to variable $\NewMass{\ArrivalIdx}{\OfflineIdx}$), and
\begin{equation}
\RewardVar
- \int_0^{\RewardVar}
  \RegSlope{\CumTime{\OfflineIdx}{\ArrivalIdx}{\omega}}\,\dd\omega
- \LambdaVar{\OfflineIdx}^{\ast}
- \EtaVar{\OfflineIdx}{\RewardVar}^{\ast}
+ \DeltaVar{\OfflineIdx}{\RewardVar}^{\ast}=0,
\label{eq:free-kkt-x}
\end{equation}
for the retained density (i.e., first-order condition of the gradient with respect to variable $\PostDensity{\OfflineIdx}{\RewardVar}$).
The complementary-slackness relations are
\begin{align}
&(\textit{arrival unsaturated})&
\sum_{\OfflineIdx\in\Neighbor{\ArrivalIdx}}\NewMass{\ArrivalIdx}{\OfflineIdx}^{\ast}<1
&\implies \DualOn{\ArrivalIdx}^{\ast}=0,
\label{eq:free-cs-1}\\
&(\textit{slack capacity})&
\NewMass{\ArrivalIdx}{\OfflineIdx}^{\ast}
+ \int_0^{\infty}\PostDensity{\OfflineIdx}{\RewardVar}^{\ast}\,\dd\RewardVar<1
&\implies \LambdaVar{\OfflineIdx}^{\ast}=0,
\label{eq:free-cs-2}\\
&(\textit{used edge})&
\NewMass{\ArrivalIdx}{\OfflineIdx}^{\ast}>0
&\implies \ThetaVar{\ArrivalIdx}{\OfflineIdx}^{\ast}=0,
\label{eq:free-cs-3}\\
&(\textit{disposed mass})&
\PostDensity{\OfflineIdx}{\RewardVar}^{\ast}<\PreDensity{\OfflineIdx}{\RewardVar}
&\implies \EtaVar{\OfflineIdx}{\RewardVar}^{\ast}=0,
\label{eq:free-cs-4}\\
&(\textit{retained mass})&
\PostDensity{\OfflineIdx}{\RewardVar}^{\ast}>0
&\implies \DeltaVar{\OfflineIdx}{\RewardVar}^{\ast}=0.
\label{eq:free-cs-5}
\end{align}
Relative to the vertex-weighted matching model in \Cref{sec:matching}, the main substantive change---beyond the new primal and dual variables for retention and disposal---is that the scalar marginal term \(f(y_j)\) is replaced by the cumulative integral \(\int_0^{r}f(\CumTime{\OfflineIdx}{\ArrivalIdx}{\omega})\,\dd\omega\).

The offline benchmark LP and its dual were given in \eqref{eq:offline-primal-free}--\eqref{eq:offline-dual-free}. Guided by the KKT system, we initialize $\DualOnHat{\ArrivalIdx}=0$ and $\DualOffHat{\OfflineIdx}=0$, and after processing arrival~$\ArrivalIdx$ we set
\begin{equation}
\DualOnHat{\ArrivalIdx} := \DualOn{\ArrivalIdx}^{\ast},
\quad
\BetaInc{\OfflineIdx}
:= \NewMass{\ArrivalIdx}{\OfflineIdx}^{\ast}
   \bigl(\EdgeReward{\ArrivalIdx}{\OfflineIdx}-\DualOn{\ArrivalIdx}^{\ast}\bigr)
   - \int_0^{\infty}
     \bigl(\PreDensity{\OfflineIdx}{\RewardVar}-\PostDensity{\OfflineIdx}{\RewardVar}^{\ast}\bigr)
     \RewardVar\,\dd \RewardVar,
\label{eq:free-dual-update}
\end{equation}
followed by the update $\DualOffHat{\OfflineIdx}\leftarrow\DualOffHat{\OfflineIdx}+\BetaInc{\OfflineIdx}$. Thus $\DualOnHat{\ArrivalIdx}$ is fixed immediately when arrival~$\ArrivalIdx$ is processed, while $\DualOffHat{\OfflineIdx}$ accumulates offline-side increments over time. The quantity $\BetaInc{\OfflineIdx}$ records the reward contribution of the new assignment net of the arrival dual and subtracts the value of the discarded mass, so the fitted dual is built online in lockstep with the primal state.

\begin{lemma}[Primal increment equals dual increment]
\label{lem:free-primal-equals-dual}
For every arrival~$\ArrivalIdx$, the increase in the fitted dual objective equals the change in the primal reward.
\end{lemma}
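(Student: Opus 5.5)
The plan follows the proof of \Lemref{lem:warmup-primal-equals-dual}: write out the primal change, expand the dual increment from \eqref{eq:free-dual-update}, and show that everything left over vanishes by complementary slackness. First I will define the change in primal reward at arrival~$\ArrivalIdx$. Under free disposal, the algorithm's retained value at vertex $\OfflineIdx$ is $\int_0^\infty \RewardVar\,\PreDensity{\OfflineIdx}{\RewardVar}\,\dd\RewardVar$. Processing arrival $\ArrivalIdx$ adds newly created mass $\NewMass{\ArrivalIdx}{\OfflineIdx}^{\ast}$ at reward $\EdgeReward{\ArrivalIdx}{\OfflineIdx}$ and removes the disposed density $\PreDensity{\OfflineIdx}{\RewardVar}-\PostDensity{\OfflineIdx}{\RewardVar}^{\ast}$. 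The primal change is therefore
\[
\Delta P_{\ArrivalIdx}=\sum_{\OfflineIdx}\NewMass{\ArrivalIdx}{\OfflineIdx}^{\ast}\EdgeReward{\ArrivalIdx}{\OfflineIdx}-\sum_{\OfflineIdx}\int_0^{\infty}\bigl(\PreDensity{\OfflineIdx}{\RewardVar}-\PostDensity{\OfflineIdx}{\RewardVar}^{\ast}\bigr)\RewardVar\,\dd\RewardVar .
\]
This is exactly the linear (unregularized) part of the objective in \eqref{eq:free-program}. Summing these increments over all arrivals telescopes to the final retained value, which is the objective of $\ALG$.

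Next I will compute the dual increment $\DualOnHat{\ArrivalIdx}+\sum_{\OfflineIdx}\BetaInc{\OfflineIdx}$ using \eqref{eq:free-dual-update}. The offline dual variables have zero objective coefficient only in the sense that they enter with coefficient one, so all of $\BetaInc{\OfflineIdx}$ counts. Expanding gives
\[
\DualOn{\ArrivalIdx}^{\ast}+\sum_{\OfflineIdx}\NewMass{\ArrivalIdx}{\OfflineIdx}^{\ast}\bigl(\EdgeReward{\ArrivalIdx}{\OfflineIdx}-\DualOn{\ArrivalIdx}^{\ast}\bigr)-\sum_{\OfflineIdx}\int_0^\infty\bigl(\PreDensity{\OfflineIdx}{\RewardVar}-\PostDensity{\OfflineIdx}{\RewardVar}^{\ast}\bigr)\RewardVar\,\dd\RewardVar
=\Delta P_{\ArrivalIdx}+\DualOn{\ArrivalIdx}^{\ast}\Bigl(1-\sum_{\OfflineIdx}\NewMass{\ArrivalIdx}{\OfflineIdx}^{\ast}\Bigr).
\]
The sum of $\NewMass{\ArrivalIdx}{\OfflineIdx}^{\ast}$ can be taken over $\Neighbor{\ArrivalIdx}$, since new mass is zero off the neighborhood (set $\EdgeReward{\ArrivalIdx}{\OfflineIdx}=0$ there, or restrict the variables). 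For vertices $\OfflineIdx\notin\Neighbor{\ArrivalIdx}$, the increment $\BetaInc{\OfflineIdx}$ reduces to minus the disposed value, which is already included in $\Delta P_{\ArrivalIdx}$. The last term vanishes by \eqref{eq:free-cs-1}: either the arrival is saturated, or $\DualOn{\ArrivalIdx}^{\ast}=0$.

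The only point I expect to need care is bookkeeping rather than mathematics: checking that the disposal term appears identically in the primal change and in $\BetaInc{\OfflineIdx}$. This requires the primal reward to be defined as retained (post-disposal) value, which is the free-disposal objective. A second check is that vertices outside $\Neighbor{\ArrivalIdx}$, which may also dispose mass, are handled consistently. Once these are settled, summing the per-arrival identity over $\ArrivalIdx$ shows that the fitted dual objective equals $\ALG$. This gives the first condition of \Lemref{lem:generic-dual-fitting} with equality.
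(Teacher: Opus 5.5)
Your proof is correct and is essentially the paper's argument: you sum the update \eqref{eq:free-dual-update} over $j$, identify the new-assignment reward minus the disposal loss as the primal increment, and eliminate the leftover term $\alpha_t^{\ast}\bigl(1-\sum_j z_{tj}^{\ast}\bigr)$ using the arrival complementary-slackness condition \eqref{eq:free-cs-1}. The one sentence about the offline duals having ``zero objective coefficient'' is garbled; what you mean, and all the argument uses, is that each $\widehat\beta_j$ enters the dual objective with coefficient one, so every increment $\Delta\widehat\beta_j$ counts in full.
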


\begin{proof}{Proof.}
Summing \eqref{eq:free-dual-update} over~$\OfflineIdx$ yields
\[
\DualOnHat{\ArrivalIdx} + \sum_{\OfflineIdx} \BetaInc{\OfflineIdx}
= \sum_{\OfflineIdx}
   \NewMassStar{\ArrivalIdx}{\OfflineIdx}\EdgeReward{\ArrivalIdx}{\OfflineIdx}
   - \sum_{\OfflineIdx} \int_0^{\infty}
     \bigl(\PreDensity{\OfflineIdx}{\RewardVar}-\PostDensityStar{\OfflineIdx}{\RewardVar}\bigr)
     \RewardVar\,\dd \RewardVar
   + \DualOn{\ArrivalIdx}^{\ast}
     \Bigl(1-\sum_{\OfflineIdx} \NewMassStar{\ArrivalIdx}{\OfflineIdx}\Bigr).
\]
The last term vanishes by \eqref{eq:free-cs-1}.  The remaining expression is the new-assignment reward minus the disposal loss, i.e., the primal reward increment at arrival~$\ArrivalIdx$. \qed
\end{proof}

\smallskip
\xhdr{Resource-side dual lower bound.}
The next lemma is the continuous-state analogue of the load-based lower bound from \Secref{sec:matching}.  It gives both the one-arrival increment inequality and the telescoping lower bound used in the competitive-ratio proof. We assume $F$ is strictly convex and $F(0)=0$.

\begin{lemma}[Resource-side dual lower bound]
\label{lem:free-beta-increment}
For every offline vertex~$\OfflineIdx$ and every arrival~$\ArrivalIdx$,
\begin{equation}
\BetaInc{\OfflineIdx}
\ge \int_0^{\infty}
  \RegSlope{\CumTime{\OfflineIdx}{\ArrivalIdx}{\RewardVar}}
  \bigl(\CumTime{\OfflineIdx}{\ArrivalIdx}{\RewardVar}
        - \CumTime{\OfflineIdx}{\ArrivalIdx-1}{\RewardVar}\bigr)
  \dd \RewardVar.
\label{eq:free-beta-lower}
\end{equation}
Consequently, for every offline vertex~$\OfflineIdx$ and every time~$\ArrivalIdx$,
\begin{equation}
\label{eq:free-beta-telescope}
    \DualOffHat{\OfflineIdx}
    \ge \int_0^{\infty}
       \RegVal{\CumTime{\OfflineIdx}{\ArrivalIdx}{\RewardVar}}\,\dd \RewardVar.
\end{equation}
\end{lemma}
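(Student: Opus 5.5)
The plan is to prove \eqref{eq:free-beta-lower} directly from the KKT system \eqref{eq:free-kkt-z}--\eqref{eq:free-cs-5}, in the same way as the proof of \Lemref{lem:warmup-beta-lower}. The telescoping bound \eqref{eq:free-beta-telescope} then follows pointwise in $\RewardVar$ from convexity of $F$.

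\emph{Step 1 (rewrite the right-hand side).} Abbreviate $z=\NewMassStar{\ArrivalIdx}{\OfflineIdx}$, $y=\PreDensity{\OfflineIdx}{\cdot}$, $x=\PostDensityStar{\OfflineIdx}{\cdot}$, and $\Delta Y(\RewardVar)=\CumTime{\OfflineIdx}{\ArrivalIdx}{\RewardVar}-\CumTime{\OfflineIdx}{\ArrivalIdx-1}{\RewardVar}$. By \eqref{eq:model-free-state-update},
\[
\Delta Y(\RewardVar)=z\,\ind{\EdgeReward{\ArrivalIdx}{\OfflineIdx}\ge\RewardVar}-\int_{\RewardVar}^{\infty}\bigl(y(\omega)-x(\omega)\bigr)\dd\omega .
\]
Substitute this into the right-hand side of \eqref{eq:free-beta-lower} and swap the order of integration in the disposal term (Tonelli applies, since $y-x\ge 0$ and $f\ge 0$ on the relevant range). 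The right-hand side then becomes
\[
z\int_0^{\EdgeReward{\ArrivalIdx}{\OfflineIdx}} f\bigl(\CumTime{\OfflineIdx}{\ArrivalIdx}{\RewardVar}\bigr)\dd\RewardVar
\;-\;\int_0^\infty \bigl(y(\omega)-x(\omega)\bigr)\int_0^{\omega} f\bigl(\CumTime{\OfflineIdx}{\ArrivalIdx}{\RewardVar}\bigr)\dd\RewardVar\,\dd\omega .
\]
These two terms match, term by term, the two terms of $\BetaInc{\OfflineIdx}$ in \eqref{eq:free-dual-update}.

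\emph{Step 2 (apply stationarity to each term).}
\begin{itemize}
\item New-mass term. If $z>0$, then \eqref{eq:free-cs-3} and \eqref{eq:free-kkt-z} give
\[
\EdgeReward{\ArrivalIdx}{\OfflineIdx}-\DualOn{\ArrivalIdx}^{\ast}=\int_0^{\EdgeReward{\ArrivalIdx}{\OfflineIdx}}f\bigl(\CumTime{\OfflineIdx}{\ArrivalIdx}{\RewardVar}\bigr)\dd\RewardVar+\LambdaVar{\OfflineIdx}^{\ast},
\]
so $z(\EdgeReward{\ArrivalIdx}{\OfflineIdx}-\DualOn{\ArrivalIdx}^{\ast})$ equals $z\int_0^{\EdgeReward{\ArrivalIdx}{\OfflineIdx}}f\,\dd\RewardVar+z\LambdaVar{\OfflineIdx}^{\ast}$. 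If $z=0$, both sides are zero.
\item Disposal term. Wherever $x(\omega)<y(\omega)$, \eqref{eq:free-cs-4} gives $\eta^\ast=0$. Then \eqref{eq:free-kkt-x} together with $\delta^\ast\ge0$ yields
\[
\omega\le \int_0^{\omega}f\bigl(\CumTime{\OfflineIdx}{\ArrivalIdx}{\RewardVar}\bigr)\dd\RewardVar+\LambdaVar{\OfflineIdx}^{\ast}.
\]
Multiplying by $-(y-x)\le 0$ and integrating bounds the disposal term from below.
\end{itemize}
Combining the two gives
\[
\BetaInc{\OfflineIdx}\ \ge\ \text{RHS of }\eqref{eq:free-beta-lower}\;+\;\LambdaVar{\OfflineIdx}^{\ast}\Bigl(z-\int_0^\infty (y-x)\Bigr).
\]

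\emph{Step 3 (sign of the leftover $\lambda$ term).} I expect this to be the only delicate point, and I will handle it with \eqref{eq:free-cs-2}. If $\LambdaVar{\OfflineIdx}^{\ast}>0$, then capacity is tight, $z+\int x=1$. Hence $z-\int(y-x)=1-\int y=1-\CumTime{\OfflineIdx}{\ArrivalIdx-1}{0}\ge 0$, because the pre-arrival state is feasible. If $\LambdaVar{\OfflineIdx}^{\ast}=0$, the leftover term vanishes. Either way it is nonnegative, which proves \eqref{eq:free-beta-lower}.

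\emph{Step 4 (telescoping).} For each fixed $\RewardVar$, convexity of $F$ gives
\[
F\bigl(\CumTime{\OfflineIdx}{s}{\RewardVar}\bigr)-F\bigl(\CumTime{\OfflineIdx}{s-1}{\RewardVar}\bigr)\le f\bigl(\CumTime{\OfflineIdx}{s}{\RewardVar}\bigr)\bigl(\CumTime{\OfflineIdx}{s}{\RewardVar}-\CumTime{\OfflineIdx}{s-1}{\RewardVar}\bigr).
\]
This holds whether the increment is positive (new mass) or negative (disposal), which is why disposal causes no trouble here. Integrate over $\RewardVar$, apply \eqref{eq:free-beta-lower} at each $s\le\ArrivalIdx$, and sum. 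Using $\CumTime{\OfflineIdx}{0}{\cdot}\equiv0$ and $F(0)=0$, the sum telescopes to \eqref{eq:free-beta-telescope}. The interchange of sum and integral is harmless, since only finitely many arrivals are involved.
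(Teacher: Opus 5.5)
Your proposal is correct and follows essentially the same argument as the paper. Both use the two stationarity conditions (for $z$ and for disposed density), sign the leftover $\lambda_j^\ast$ term via slack-capacity complementary slackness plus pre-arrival feasibility, apply Fubini, and telescope pointwise in the reward using convexity of $F$. The only difference is order: you expand the right-hand side via the state update first, whereas the paper bounds $\Delta\widehat\beta_j$ first and applies Fubini at the end.
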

\begin{proof}{Proof.}
Fix~$\OfflineIdx$ and write $z^*:=\NewMassStar{\ArrivalIdx}{\OfflineIdx}$, $x^*(\RewardVar):=\PostDensityStar{\OfflineIdx}{\RewardVar}$, $y(\RewardVar):=\PreDensity{\OfflineIdx}{\RewardVar}$, and $Y^*(\RewardVar):=\CumTime{\OfflineIdx}{\ArrivalIdx}{\RewardVar}$.
By \eqref{eq:free-dual-update},
\begin{equation}
\label{eq:free-beta-proof-start}
    \BetaInc{\OfflineIdx}
    = z^*\bigl(\EdgeReward{\ArrivalIdx}{\OfflineIdx}-\DualOn{\ArrivalIdx}^{\ast}\bigr)
      - \int_0^{\infty} \bigl(y(\RewardVar)-x^*(\RewardVar)\bigr)\RewardVar\,\dd\RewardVar.
\end{equation}
If $z^*>0$, then \eqref{eq:free-cs-3} and \eqref{eq:free-kkt-z} give
\begin{equation}
\label{eq:free-z-kkt-active}
    \EdgeReward{\ArrivalIdx}{\OfflineIdx}-\DualOn{\ArrivalIdx}^{\ast}
    = \LambdaVar{\OfflineIdx}^{\ast}
      + \int_0^{\EdgeReward{\ArrivalIdx}{\OfflineIdx}}
        \RegSlope{Y^*(\RewardVar)}\,\dd\RewardVar.
\end{equation}
If $z^*=0$, the first term of \eqref{eq:free-beta-proof-start} vanishes and the same bound holds trivially.  Likewise, if $y(\RewardVar)-x^*(\RewardVar)>0$, then \eqref{eq:free-cs-4} and \eqref{eq:free-kkt-x} imply
\begin{equation}
\label{eq:free-x-kkt-disposed}
    \RewardVar
    \le \LambdaVar{\OfflineIdx}^{\ast}
      + \int_0^{\RewardVar}\RegSlope{Y^*(\omega)}\,\dd\omega.
\end{equation}
Substituting these bounds into \eqref{eq:free-beta-proof-start} gives
\begin{align*}
    \BetaInc{\OfflineIdx}
    &\ge z^* \int_0^{\EdgeReward{\ArrivalIdx}{\OfflineIdx}}
           \RegSlope{Y^*(\RewardVar)}\,\dd\RewardVar
      - \int_0^{\infty}
        \bigl(y(\RewardVar)-x^*(\RewardVar)\bigr)
        \int_0^{\RewardVar}\RegSlope{Y^*(\omega)}\,\dd\omega\,\dd\RewardVar \\
    &\qquad
      + \LambdaVar{\OfflineIdx}^{\ast}
        \Bigl(z^* + \int_0^{\infty}x^*(\RewardVar)\,\dd\RewardVar
              - \int_0^{\infty}y(\RewardVar)\,\dd\RewardVar\Bigr).
\end{align*}
The last term is nonnegative: either $\LambdaVar{\OfflineIdx}^{\ast}=0$ by \eqref{eq:free-cs-2}, or the capacity constraint is tight so that $z^*+\int x^*=1\ge \int y$.  Dropping this term and applying Fubini's theorem,
\begin{align*}
    \BetaInc{\OfflineIdx}
    &\ge \int_0^{\infty}
      \RegSlope{Y^*(\omega)}
      \Bigl(
        z^*\ind{\EdgeReward{\ArrivalIdx}{\OfflineIdx}\ge \omega}
        + \int_{\omega}^{\infty}x^*(\RewardVar)\,\dd\RewardVar
        - \int_{\omega}^{\infty}y(\RewardVar)\,\dd\RewardVar
      \Bigr)\dd\omega.
\end{align*}
By the state-update identity \eqref{eq:model-free-state-update}, the quantity in parentheses is $\CumTime{\OfflineIdx}{\ArrivalIdx}{\omega}-\CumTime{\OfflineIdx}{\ArrivalIdx-1}{\omega}$, proving \eqref{eq:free-beta-lower}.

For the second claim, sum \eqref{eq:free-beta-lower} over $s=1,\ldots,\ArrivalIdx$ to get
\[
    \DualOffHat{\OfflineIdx}
    \ge \int_0^{\infty}
       \sum_{s=1}^{\ArrivalIdx}
       \RegSlope{\CumTime{\OfflineIdx}{s}{\RewardVar}}
       \bigl(\CumTime{\OfflineIdx}{s}{\RewardVar}
             - \CumTime{\OfflineIdx}{s-1}{\RewardVar}\bigr)
       \dd\RewardVar.
\]
For each fixed threshold~$\RewardVar$, convexity of $F$ implies $F(a)-F(b)\le f(a)(a-b)$.  Applying this with $a=\CumTime{\OfflineIdx}{s}{\RewardVar}$ and $b=\CumTime{\OfflineIdx}{s-1}{\RewardVar}$, then telescoping in~$s$, gives
\[
    \sum_{s=1}^{\ArrivalIdx}
      \RegSlope{\CumTime{\OfflineIdx}{s}{\RewardVar}}
      \bigl(\CumTime{\OfflineIdx}{s}{\RewardVar}
            - \CumTime{\OfflineIdx}{s-1}{\RewardVar}\bigr)
    \ge \RegVal{\CumTime{\OfflineIdx}{\ArrivalIdx}{\RewardVar}}
\]
since the initial state is empty and $F(0)=0$.  Integrating over~$\RewardVar$ proves \eqref{eq:free-beta-telescope}. \qed
\end{proof}

% \begin{proof}{Proof.}
% Write
% $\CumTime{\OfflineIdx}{\ArrivalIdx}{\RewardVar}
% = \PostCum{\OfflineIdx}{\RewardVar}^{\ast}
%   + \NewMass{\ArrivalIdx}{\OfflineIdx}^{\ast}\ind{\EdgeReward{\ArrivalIdx}{\OfflineIdx}\ge \RewardVar}$.
% If $\NewMass{\ArrivalIdx}{\OfflineIdx}^{\ast}>0$, then~\eqref{eq:free-kkt-z}
% and~\eqref{eq:free-cs-3} imply
% $\EdgeReward{\ArrivalIdx}{\OfflineIdx} - \DualOn{\ArrivalIdx}^{\ast}
% = \int_0^{\EdgeReward{\ArrivalIdx}{\OfflineIdx}}
%   \RegSlope{\CumTime{\OfflineIdx}{\ArrivalIdx}{\RewardVar}}\dd \RewardVar
%   + \LambdaVar{\OfflineIdx}^{\ast}$.
% For reward levels where mass is disposed
% ($\PreDensity{\OfflineIdx}{\RewardVar}-\PostDensity{\OfflineIdx}{\RewardVar}^{\ast}>0$),
% condition~\eqref{eq:free-cs-4} gives $\EtaVar{\OfflineIdx}{\RewardVar}^{\ast}=0$,
% and~\eqref{eq:free-kkt-x} yields
% $\RewardVar - \int_0^{\RewardVar}
% \RegSlope{\CumTime{\OfflineIdx}{\ArrivalIdx}{\omega}}\dd \omega
% \le \LambdaVar{\OfflineIdx}^{\ast}$.
% Substituting these relations into~\eqref{eq:free-dual-update} and applying Fubini's theorem
% to exchange the order of integration gives the bound~\eqref{eq:free-beta-lower}.\qed
% \end{proof}

\smallskip
\xhdr{Threshold structure in the saturated case.} The next lemma is the analogue of the water-level description from ordinary water-filling: when an offline node is active, the KKT system implies that the optimal solution discards the lowest-reward mass first and therefore retains only the highest-value mass, yielding a threshold description of the retained histogram.

\begin{lemma}[Threshold structure of the retained histogram]
\label{lem:free-threshold}
Suppose the capacity constraint of offline vertex~$\OfflineIdx$ is tight after processing arrival~$\ArrivalIdx$, and define the threshold function
\begin{equation}
\ThresholdVal{\RewardVar}
:= \RewardVar - \int_0^{\RewardVar}
   \RegSlope{\CumTime{\OfflineIdx}{\ArrivalIdx}{\omega}}\dd \omega.
\label{eq:free-threshold-function}
\end{equation}
With $f(y)=\Expo^{y-1}$, the function $G$ is nondecreasing, and there exists a threshold $\ThresholdVar\ge 0$ such that all retained mass has reward at least~$\ThresholdVar$.  Equivalently, $\CumTime{\OfflineIdx}{\ArrivalIdx}{\RewardVar}=1$ for every $\RewardVar\in[0,\ThresholdVar]$.
\end{lemma}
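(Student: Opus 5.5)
The plan is to prove the monotonicity of $G$ directly, and then read the threshold off the stationarity conditions \eqref{eq:free-kkt-z}--\eqref{eq:free-kkt-x} for the new and retained mass.

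\emph{Step 1 (monotonicity of $G$).} With $f(y)=\Expo^{y-1}$, the function $G$ is absolutely continuous, and almost everywhere
\[
G'(\RewardVar)=1-f\bigl(\CumTime{\OfflineIdx}{\ArrivalIdx}{\RewardVar}\bigr).
\]
Feasibility gives $0\le \CumTime{\OfflineIdx}{\ArrivalIdx}{\RewardVar}\le \CumTime{\OfflineIdx}{\ArrivalIdx}{0}\le 1$, so $f(\CumTime{\OfflineIdx}{\ArrivalIdx}{\RewardVar})\le f(1)=1$. Hence $G'\ge 0$ and $G$ is nondecreasing. Moreover, $G'(\RewardVar)=0$ exactly when $\CumTime{\OfflineIdx}{\ArrivalIdx}{\RewardVar}=1$.

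\emph{Step 2 (all post-decision mass lies where $G\ge\lambda^\ast$).} Write $\lambda:=\LambdaVar{\OfflineIdx}^{\ast}$.
\begin{itemize}
\item Retained mass: if $\PostDensityStar{\OfflineIdx}{\RewardVar}>0$, then \eqref{eq:free-cs-5} gives $\DeltaVar{\OfflineIdx}{\RewardVar}^\ast=0$. Since $\EtaVar{\OfflineIdx}{\RewardVar}^\ast\ge 0$, \eqref{eq:free-kkt-x} then gives $G(\RewardVar)=\lambda+\EtaVar{\OfflineIdx}{\RewardVar}^\ast\ge\lambda$.
\item New mass: if $\NewMassStar{\ArrivalIdx}{\OfflineIdx}>0$, then \eqref{eq:free-cs-3} gives $\ThetaVar{\ArrivalIdx}{\OfflineIdx}^\ast=0$. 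Then \eqref{eq:free-kkt-z} reads $G(\EdgeReward{\ArrivalIdx}{\OfflineIdx})=\DualOn{\ArrivalIdx}^\ast+\lambda\ge\lambda$.
\end{itemize}
So, up to a null set, every unit of mass counted in $\CumTime{\OfflineIdx}{\ArrivalIdx}{\cdot}$ sits at a reward level $\RewardVar$ with $G(\RewardVar)\ge \lambda$.

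\emph{Step 3 (defining the threshold).} Set $\ThresholdVar:=\inf\{\RewardVar\ge 0: G(\RewardVar)\ge\lambda\}$, with the convention that $\ThresholdVar=0$ when $\lambda=0$, since $G(0)=0$. Because $G$ is nondecreasing and continuous, $\{G\ge\lambda\}=[\ThresholdVar,\infty)$. By Step~2, all retained and new mass therefore has reward at least $\ThresholdVar$.

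Now use tightness of the capacity constraint: $\NewMassStar{\ArrivalIdx}{\OfflineIdx}+\int_0^\infty \PostDensityStar{\OfflineIdx}{\RewardVar}\,\dd\RewardVar=1$, and this total equals $\CumTime{\OfflineIdx}{\ArrivalIdx}{0}$ by \eqref{eq:model-free-state-update}. Since no mass lies below $\ThresholdVar$, we get $\CumTime{\OfflineIdx}{\ArrivalIdx}{\RewardVar}=\CumTime{\OfflineIdx}{\ArrivalIdx}{0}=1$ for all $\RewardVar\in[0,\ThresholdVar]$, which is the claimed equivalent form.

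\emph{Expected obstacle.} The only delicate point is measure-theoretic: the KKT conditions for the density hold only almost everywhere, and mass may sit exactly on the boundary level set $\{G=\lambda\}$. I would handle this by working with the closed set $\{G\ge\lambda\}$ rather than strict inequalities. On any interval where $G$ is flat we have $\CumTime{\OfflineIdx}{\ArrivalIdx}{\cdot}=1$, consistent with Step~1, so mass on a plateau does not break the threshold description.
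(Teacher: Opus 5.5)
Your proof is correct, but it gets the threshold from a different pair of KKT conditions than the paper uses.

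\textbf{The paper's route.} The paper pairs two inequalities. The first is the retained-mass inequality, $x_j^\ast(r)>0 \Rightarrow G(r)\ge\lambda_j^\ast$. The second is the disposal inequality from \eqref{eq:free-kkt-x} and \eqref{eq:free-cs-4}, namely $x_j^\ast(r)<y_j(r)\Rightarrow G(r)\le\lambda_j^\ast$. Monotonicity of $G$ then shows that discarded mass lies below the crossing point and retained mass lies above it. This yields the ``dispose the lowest-value mass first'' picture. However, the paper never says where the new mass $z_{tj}^\ast$ sits. That fact is needed to pass from ``the retained density vanishes below $q$'' to $Y_j^{(t)}(r)=1$ on $[0,q]$.

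\textbf{Your route.} You pair the retained-mass inequality with the new-mass stationarity condition \eqref{eq:free-kkt-z}, which gives $G(r_{tj})=\alpha_t^\ast+\lambda_j^\ast\ge\lambda_j^\ast$. You never use the disposal condition, and tightness then gives $Y_j^{(t)}\equiv 1$ on $[0,q]$ directly.

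\textbf{What each buys.} Your argument is more complete for the equivalent form as stated. The paper's argument gives the extra structural fact about which mass is discarded; you could recover it by adding the disposal inequality.

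Your version has one further advantage. As literally stated, the lemma is satisfied trivially by $q=0$, since $Y_j^{(t)}(0)=1$ by tightness. Your explicit choice $q:=\inf\{r\ge 0: G(r)\ge\lambda_j^\ast\}$ satisfies $G(q)=\lambda_j^\ast$, by continuity of $G$ and $G(0)=0$. That is exactly the property invoked in Case~2 of the proof of \Cref{thm:free-main}.

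\textbf{One small addition.} You should note that $\{G\ge\lambda_j^\ast\}$ is nonempty. This holds because tight capacity forces some mass to exist, and Step~2 places $G\ge\lambda_j^\ast$ at that mass. Alternatively, $G'(r)\to 1-1/\mathrm{e}>0$ as $r\to\infty$.
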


\begin{proof}{Proof.}
Because $f(y)=\Expo^{y-1}\le 1$ for $y\in[0,1]$, the derivative $G'(\RewardVar)=1-f(\CumTime{\OfflineIdx}{\ArrivalIdx}{\RewardVar})\ge 0$, so $G$ is nondecreasing.  Now inspect the retained-density KKT condition \eqref{eq:free-kkt-x}.  If $\PostDensityStar{\OfflineIdx}{\RewardVar}>0$, then $\DeltaVarOpt{\OfflineIdx}{\RewardVar}=0$ by \eqref{eq:free-cs-5}, so $G(\RewardVar)=\LambdaVar{\OfflineIdx}^{\ast}+\EtaVarOpt{\OfflineIdx}{\RewardVar}\ge \LambdaVar{\OfflineIdx}^{\ast}$.  Conversely, if disposal occurs at reward level~$\RewardVar$ (i.e., $\PreDensity{\OfflineIdx}{\RewardVar}-\PostDensityStar{\OfflineIdx}{\RewardVar}>0$), then $\EtaVarOpt{\OfflineIdx}{\RewardVar}=0$ by \eqref{eq:free-cs-4}, so $G(\RewardVar)=\LambdaVar{\OfflineIdx}^{\ast}-\DeltaVarOpt{\OfflineIdx}{\RewardVar}\le \LambdaVar{\OfflineIdx}^{\ast}$.  Since $G$ is nondecreasing, these two inequalities force a threshold: disposed mass lies below the crossing point of $G$ with $\LambdaVar{\OfflineIdx}^{\ast}$, and retained mass lies above it.  Because the resource is saturated, $\CumTime{\OfflineIdx}{\ArrivalIdx}{\RewardVar}=1$ for every threshold below that point. \qed
\end{proof}

\begin{figure}[t]
    \centering
    \begin{tikzpicture}[x=1cm,y=2.5cm]
        \draw[handaxis] (0,0) -- (6.1,0) node[right] {$\RewardVar$};
        \draw[handaxis] (0,0) -- (0,1.15)
             node[above] {$\CumTime{\OfflineIdx}{\ArrivalIdx}{\RewardVar}$};
        \draw[handline, tutorialblue]
            (0,1) -- (2.1,1)
            .. controls (2.7,1) and (3.2,0.78) .. (3.8,0.58)
            .. controls (4.4,0.40) and (5.0,0.20) .. (5.7,0.05);
        \draw[dashedguide] (2.1,0) -- (2.1,1);
        \draw[dashedguide] (0,1) -- (2.1,1);
        \node[figlabel, below] at (2.1,0) {$\ThresholdVar=\lambda_j^*$};
        \node[figlabel, left] at (0,1) {$1$};
        \node[figlabel, text=tutorialgreen, anchor=south west] at (2.75,0.82)
        {active-capacity case};
        \node[figlabel, anchor=north west] at (0.85,1.3)
        {\small all mass $\ge\ThresholdVar$ is kept};
    \end{tikzpicture}
   \caption{Threshold structure in the saturated case.  The algorithm disposes the lowest-value mass first; the retained histogram has a sharp cutoff at~$\boldsymbol{\ThresholdVar=\lambda^*_j}$.}
    \label{fig:free-threshold}
\end{figure}

\smallskip
\xhdr{Main guarantee.} Putting these ingredients together, we obtain the following theorem.
\begin{theorem}[Edge-weighted with free-disposal guarantee]
\label{thm:free-main}
With $f(y)=\Expo^{y-1}$ and $F(y)=\Expo^{y-1}-\Expo^{-1}$, the online algorithm solving the regularized concave program \eqref{eq:free-program} for each arrival $t$ is $(1-1/\Expo)$-competitive for edge-weighted online matching with free disposal.
\end{theorem}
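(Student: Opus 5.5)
The plan mirrors the proof of \Thmref{thm:warmup-main}, working pointwise in the reward coordinate and relying on \Lemref{lem:generic-dual-fitting}. By \Lemref{lem:free-primal-equals-dual}, summing over arrivals gives $\sum_{\ArrivalIdx}\DualOnHat{\ArrivalIdx}+\sum_{\OfflineIdx}\DualOffHat{\OfflineIdx}$ equal to the final primal reward, i.e., the total value of the retained mass. So it remains to show approximate dual feasibility: $\DualOnHat{\ArrivalIdx}+\DualOffHat{\OfflineIdx}\ge(1-1/\Expo)\EdgeReward{\ArrivalIdx}{\OfflineIdx}$ for every edge, where $\DualOffHat{\OfflineIdx}$ is its final value. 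I would first verify that $\DualOffHat{\OfflineIdx}$ is nondecreasing in time, since \eqref{eq:free-beta-lower} gives $\BetaInc{\OfflineIdx}\ge\int f(Y^{(t)})(Y^{(t)}-Y^{(t-1)})$. That integrand can be negative where mass is disposed, so monotonicity is not immediate. I would therefore avoid needing it: \eqref{eq:free-beta-telescope} holds at every time, including the final time $T$, and $Y_j^{(T)}(r)$ should dominate $Y_j^{(t)}(r)$ for $r\le\EdgeReward{\ArrivalIdx}{\OfflineIdx}$ in the relevant sense. The cleaner route is to show that $\CumTime{\OfflineIdx}{s}{\RewardVar}$ is nondecreasing in $s$ at thresholds where it matters. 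Failing that, I would bound the final $\DualOffHat{\OfflineIdx}$ via the per-step increments directly.

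Fix an edge $(\ArrivalIdx,\OfflineIdx)$ with $r:=\EdgeReward{\ArrivalIdx}{\OfflineIdx}$. From \eqref{eq:free-kkt-z} with $\ThetaVar{\ArrivalIdx}{\OfflineIdx}^\ast\ge0$ we get
\[
\DualOnHat{\ArrivalIdx}\ge r-\int_0^r f(\CumTime{\OfflineIdx}{\ArrivalIdx}{\omega})\dd\omega-\LambdaVar{\OfflineIdx}^\ast .
\]
The proof then splits into two cases.

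\emph{Case 1: the capacity of $\OfflineIdx$ is slack after $\ArrivalIdx$.} Here $\LambdaVar{\OfflineIdx}^\ast=0$ by \eqref{eq:free-cs-2}. Combining with \eqref{eq:free-beta-telescope} at time $\ArrivalIdx$ and restricting the integral to $[0,r]$ (valid since $F\ge0$), we get
\[
\DualOnHat{\ArrivalIdx}+\DualOffHat{\OfflineIdx}\ge\int_0^r\bigl(1-f(Y(\omega))+F(Y(\omega))\bigr)\dd\omega=(1-1/\Expo)r,
\]
by the functional identity \eqref{eq:warmup-functional-identity} applied pointwise in $\omega$.

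\emph{Case 2: the capacity is tight.} By \Lemref{lem:free-threshold} there is a threshold $q$ with $Y(\omega)=1$ on $[0,q]$. I would show $\LambdaVar{\OfflineIdx}^\ast\le G(q)=q-\int_0^q f(1)\dd\omega=0$, or more carefully that $\LambdaVar{\OfflineIdx}^\ast=G(q)$, using that $G$ is flat on $[0,q]$ because $f(1)=1$. This gives $G\equiv0$ there, so in fact $\LambdaVar{\OfflineIdx}^\ast=0$ whenever disposal occurs at a level inside $[0,q]$. In the remaining subcase the multiplier may be positive, but then $Y(\omega)=1$ on $[0,q]$, and I would use $F(1)=1-1/\Expo$ on $[0,\min(q,r)]$ together with the Case 1 computation on $[q,r]$, where $G(\omega)-\LambdaVar{\OfflineIdx}^\ast\ge0$. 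Concretely, I will show that
\[
\DualOnHat{\ArrivalIdx}\ge G(r)-\LambdaVar{\OfflineIdx}^\ast\ge\int_q^r(1-f(Y))\dd\omega
\]
holds whenever $\LambdaVar{\OfflineIdx}^\ast\le G(q)$. This step is the delicate one, and I would check it against the KKT conditions for retained versus disposed mass.

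The main obstacle is passing from time $\ArrivalIdx$ to the final dual $\DualOffHat{\OfflineIdx}$, since later disposal can decrease $Y$ at high thresholds. My plan is to argue that free disposal only removes the lowest-value mass: by \Lemref{lem:free-threshold}, disposal happens only when saturated, and then below the threshold, where $Y=1$ stays equal to $1$. Hence for every $\omega$, $\CumTime{\OfflineIdx}{s}{\omega}$ is nondecreasing in $s$, so $\int F(Y^{(s)})$ is nondecreasing and \eqref{eq:free-beta-telescope} at time $T$ dominates the bound at time $\ArrivalIdx$. Combining both cases gives dual feasibility of $(\widehat\alpha,\widehat\beta)/(1-1/\Expo)$, and \Lemref{lem:generic-dual-fitting} concludes.
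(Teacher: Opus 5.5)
Your proposal is correct and is essentially the paper's proof: the same slack/saturated split, the KKT bound $\widehat\alpha_t\ge G(r)-\lambda_j^\ast$, the telescoped bound $\widehat\beta_j\ge\int_0^r F(Y_j^{(t)})$, the pointwise identity $1-f+F=1-1/\Expo$, and $\lambda_j^\ast=G(q)$ from \Lemref{lem:free-threshold} in the saturated case. Note that because $Y_j^{(t)}=1$ and $f(1)=1$ on $[0,q]$, this gives $\lambda_j^\ast=G(q)=0$. So there is no ``remaining subcase'' with a positive multiplier, and Case~2 collapses to the Case~1 computation. Your ``delicate step'' $\lambda_j^\ast\le G(q)$ also holds directly: post-decision mass just above $q$ sits at levels where $G\ge\lambda_j^\ast$, and $G$ is continuous.

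Your additional argument that $Y_j^{(s)}(\omega)$ is nondecreasing in $s$ is a worthwhile addition. It rests on the fact that disposal only occurs at levels where the post-decision profile equals $1$. The paper's proof uses the time-$\ArrivalIdx$ value of $\widehat\beta_j$ in the final dual constraint without checking that later increments are nonnegative. It makes this monotonicity explicit only for configuration allocation, in \Corref{cor:config-monotone}.
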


\begin{proof}{Proof.}
By \Lemref{lem:free-primal-equals-dual}, the fitted dual objective equals the online algorithm's value.  It remains to verify approximate feasibility of the dual LP \eqref{eq:offline-dual-free}.  Fix an edge $(\ArrivalIdx,\OfflineIdx)\in\EdgeSet$ and abbreviate $r:=\EdgeReward{\ArrivalIdx}{\OfflineIdx}$.

\emph{Case 1: $\OfflineIdx$ is not saturated after processing arrival~$\ArrivalIdx$.}
Then $\LambdaVar{\OfflineIdx}^{\ast}=0$ by \eqref{eq:free-cs-2}, so \eqref{eq:free-kkt-z} and $\ThetaVar{\ArrivalIdx}{\OfflineIdx}^{\ast}\ge 0$ give $\DualOnHat{\ArrivalIdx}\ge r-\int_0^{r}f(\CumTime{\OfflineIdx}{\ArrivalIdx}{\RewardVar})\,\dd\RewardVar$.  Combining with \eqref{eq:free-beta-telescope} (restricting the integral to $[0,r]$) and applying the scalar identity $1-f(y)+F(y)=1-1/\Expo$ pointwise,
\[
    \DualOnHat{\ArrivalIdx}+\DualOffHat{\OfflineIdx}
    \ge \int_0^{r}
       \Bigl(1-f\bigl(\CumTime{\OfflineIdx}{\ArrivalIdx}{\RewardVar}\bigr)
             +F\bigl(\CumTime{\OfflineIdx}{\ArrivalIdx}{\RewardVar}\bigr)\Bigr)
       \,\dd\RewardVar
    = (1-1/\Expo)\,r.
\]

\emph{Case 2: $\OfflineIdx$ is saturated at arrival~$\ArrivalIdx$.}
Let $\ThresholdVar$ be the threshold from \Lemref{lem:free-threshold}, so that $q$ is the largest number such that $\CumTime{\OfflineIdx}{\ArrivalIdx}{\RewardVar}=1$ for $\RewardVar\in[0,\ThresholdVar]$. Recall from the proof of \Cref{lem:free-threshold} that  $G$ from \eqref{eq:free-threshold-function} is nondecreasing, and the $\ThresholdVar$ is such that $G(\ThresholdVar)=\LambdaVar{\OfflineIdx}^{\ast}$. We now have two cases. If $r\le \ThresholdVar$, then \eqref{eq:free-beta-telescope} gives 
$$\DualOffHat{\OfflineIdx}\ge \int_0^{r}F(1)\,\dd\RewardVar=(1-1/\Expo)r~,$$
and therefore $ \DualOnHat{\ArrivalIdx}+\DualOffHat{\OfflineIdx}\geq (1-1/\Expo)r$.

Otherwise, if $r>\ThresholdVar$, we first derive an arrival-side lower bound.  By \eqref{eq:free-kkt-z} and $\ThetaVar{\ArrivalIdx}{\OfflineIdx}^{\ast}\ge 0$, we have:
\[
    \DualOnHat{\ArrivalIdx}
    \ge r
      - \int_0^{r}
        f\bigl(\CumTime{\OfflineIdx}{\ArrivalIdx}{\RewardVar}\bigr)\,\dd\RewardVar
      - \LambdaVar{\OfflineIdx}^{\ast}.
\]
Substituting $\LambdaVar{\OfflineIdx}^{\ast}=G(\ThresholdVar)=\ThresholdVar-\int_0^{\ThresholdVar}f(\CumTime{\OfflineIdx}{\ArrivalIdx}{\omega})\,\dd\omega$ and simplifying yields
\begin{equation}
\label{eq:free-alpha-threshold}
    \DualOnHat{\ArrivalIdx}
    \ge \int_{\ThresholdVar}^{r}
       \bigl(1-f\bigl(\CumTime{\OfflineIdx}{\ArrivalIdx}{\RewardVar}\bigr)\bigr)
       \,\dd\RewardVar.
\end{equation}
Moreover, \eqref{eq:free-beta-telescope} and $\CumTime{\OfflineIdx}{\ArrivalIdx}{\RewardVar}=1$ on $[0,\ThresholdVar]$ give
\begin{equation}
\label{eq:free-disp-lower}
    \DualOffHat{\OfflineIdx}
    \ge (1-1/\Expo)\,\ThresholdVar
       + \int_{\ThresholdVar}^{r}
         F\bigl(\CumTime{\OfflineIdx}{\ArrivalIdx}{\RewardVar}\bigr)\,\dd\RewardVar.
\end{equation}
Adding \eqref{eq:free-alpha-threshold} and \eqref{eq:free-disp-lower}, then applying $1-f(y)+F(y)=1-1/\Expo$ pointwise on $[\ThresholdVar,r]$, we obtain
\begin{align*}
    \DualOnHat{\ArrivalIdx}+\DualOffHat{\OfflineIdx}
    &\ge (1-1/\Expo)\,\ThresholdVar
       + \int_{\ThresholdVar}^{r}
         \Bigl(1-f\bigl(\CumTime{\OfflineIdx}{\ArrivalIdx}{\RewardVar}\bigr)
               +F\bigl(\CumTime{\OfflineIdx}{\ArrivalIdx}{\RewardVar}\bigr)\Bigr)
         \,\dd\RewardVar\\
    &= (1-1/\Expo)\,\ThresholdVar + (1-1/\Expo)(r-\ThresholdVar)
    = (1-1/\Expo)r.
\end{align*}
In both cases, $(\widehat\alpha/(1-1/\Expo),\widehat\beta/(1-1/\Expo))$ is feasible for the offline dual LP \eqref{eq:offline-dual-free}, and \Lemref{lem:generic-dual-fitting} completes the proof. \qed
\end{proof}

\begin{remark}
This section shows the main message of Part~I: the convex-programming / dual-fitting template still works even when the state is no longer a single number. Here the state is a reward-indexed cumulative profile. Even so, the same plan goes through: the arrival-wise program chooses the primal action, the KKT conditions define the dual update, and the proof again uses the identity $1-f(y)+F(y)=1-1/\Expo$, now one threshold at a time. The new idea is the threshold description of the retained histogram. For more extensions, see the electronic companion: batching in \Cref{sec:batch,sec:ec-batch} and configuration allocation / whole-page optimization in \Cref{sec:configuration,sec:ec-config}.
\end{remark}

%% file: tex/part2.tex
Recall from Section~\ref{subsec:model-base} that fluid LP relaxations can be loose for stochastic outcomes. This part presents a certificate framework that bypasses these relaxations by replacing the edge-level dual constraints used in Part~I with a \emph{resource-level covering condition}. This universal certificate applies across various stochastic models and enables sample-path coupling when needed. To demonstrate the seamless adaptability of the framework, we analyze the Greedy algorithm in three distinct settings, establishing a $0.5$ lower bound on its competitive ratio in each. Because these settings generalize the base model, this implies that Greedy is \emph{exactly} $0.5$-competitive.

Section~\ref{sec:lp-free} first introduces this LP-free certificate, proving its validity, tightness, and relationship to classical dual constraints in the stochastic-rewards setting. Sections~\ref{sec:greedy-sr}--\ref{sec:reusability} then apply the framework to Greedy under stochastic rewards, matching with patience, and stochastic reusability, unifying results previously derived via disparate proof templates \citep{golrezaei2014real, chan2009stochastic, borodin2022prophet, brubach2025online, gong2022reusable} into a single framework.

  Before proceeding, we summarize the notation from Section~\ref{sec:models} that is used throughout this section. Let $\mathcal{A}\in\{\ALG,\OPT\}$ denote a generic non-anticipative policy. Along a sample path, $U^{\mathcal{A}}_j$ is the set of arrivals $\mathcal{A}$ attempts to match to resource $j$ while available, while $V^{\mathcal{A}}_t$ and $V^{\mathcal{A}}_{T+1}$ denote the available resources at arrival $t$ and at the horizon's end, respectively. Let $\Rew^{\mathcal{A}}$ denote the sample-path deterministic reward, and let $\Rew^{\mathcal{A}}_j$ denote the deterministic reward attributable to resource $j$, so that $\Rew^{\mathcal{A}}=\sum_j \Rew^{\mathcal{A}}_j$. Unless stated otherwise, expectations are taken over the independent random outcomes realized by both policies.

\subsection{The LP-Free Framework}\label{sec:lp-free}
We use the stochastic-rewards setting to motivate the framework. In this setting, the classical dual imposes the following constraint for each edge:
$$\alpha_t+p_{tj}\beta_j\geq p_{tj} r_j.$$
As demonstrated in Example~\ref{rem:loose}, the classical LP upper bound is loose even for trivial problem instances. To obtain a tighter upper bound on the offline benchmark in the primal (maximization) space, one may (for instance) impose additional constraints, which lowers the optimal value. In the dual (minimization) space, this corresponds to a relaxed program with an expanded feasible region. In this spirit, the LP-free certificate deliberately relaxes the standard edge-level dual constraints, opting instead to ``cover'' each resource. Replacing numerous edge-specific constraints with a unified resource-level condition expands the dual feasible region. Minimizing over this larger space yields a smaller dual optimum, ultimately enabling a tight comparison with the offline benchmark.

A first attempt would be to declare resource $j$ covered whenever
\begin{equation}\label{eq:naive-cover}
	\E[\Rew^{\ALG}_j]\;\ge\;\tau\,\E[\Rew^{\OPT}_j],
\end{equation}
  However, \eqref{eq:naive-cover} is generally too strong. For instance, when resources are abundant, $\ALG$ may never use a particular resource $j$ even though $\OPT$ does.

We therefore certify coverage using a \emph{pseudo-reward} defined in terms of auxiliary (sample path) variables $\alpha_{tj}$ and $\beta_j$. We declare resource $j$ covered if its pseudo-reward is at least $\tau\,\E[\Rew^{\OPT}_j]$, where
\begin{equation}\label{eq:pseudoreward}
	\text{Pseudo-reward of $j$:}\qquad
	\E\!\left[\sum_{t\in U^{\OPT}_j}\alpha_{tj}\right]
	\;+\;
	\E[\beta_j].
\end{equation}
The interpretation is as follows. The term $\E[\beta_j]$ is intended to track (a portion of) the expected reward that $\ALG$ obtains from using resource $j$. Same as \eqref{eq:naive-cover}, this is not sufficient on its own, because $\ALG$ may obtain value by assigning arrivals that $\OPT$ sends to $j$ to \emph{other} resources. The variables $\alpha_{tj}$ are designed to account for this effect: if $\OPT$ assigns arrival $t$ to $j$ but $\ALG$ assigns $t$ elsewhere, then $\alpha_{tj}$ can capture (a portion of) the reward that $\ALG$ extracts from this alternative assignment. Summing $\alpha_{tj}$ over $t\in U^{\OPT}_j$ then yields a resource-level proxy for the value that $\ALG$ obtains from arrivals that $\OPT$ uses to generate $\Rew^{\OPT}_j$. 

In the remainder of this section, we utilize a mathematically more flexible notation for the pseudo-reward by summing $\alpha_{tj}$ over all arrivals $t \in T$. Setting $\alpha_{tj} = 0$ for all $t \notin U^{\OPT}_j$ recovers the base definition in \eqref{eq:pseudoreward}.

\begin{theorem}[LP-Free Certificate]\label{thm:form1}
	$\ALG$ is $\tau$-competitive if there exist random variables
	$\{\alpha_{tj}\}_{j\in V,\,t\in U}$ and $\{\beta_j\}_{j\in V}$
	such that
\begin{equation} \label{eq:f1-framework}
	\begin{aligned}
		&\text{\textbf{Budget:}} \quad \E\left[\sum_{t,j} \alpha_{tj} + \sum_{j} \beta_j\right] \le \E[\Rew^{\ALG}] \\[5pt]
		&\text{\textbf{Cover:}}  \quad \E\left[\sum_{t} \alpha_{tj} + \beta_j\right] \ge \tau \cdot \E[\Rew^{\OPT}_j] \quad \forall j \in V
	\end{aligned}
\end{equation}
\end{theorem}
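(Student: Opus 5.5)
The argument is a direct weak-duality-style summation, mirroring \Lemref{lem:generic-dual-fitting} but with the resource-level cover replacing edge-level dual feasibility. We will sum the Cover inequality over all resources, compare the result with the Budget inequality, and then invoke the reward decomposition of \Cref{obs:det} to identify the right-hand side with the benchmark. No LP relaxation appears; the only structural facts needed are linearity of expectation and the additivity $\Rew^{\OPT}=\sum_j \Rew^{\OPT}_j$.

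Concretely, we first sum the Cover condition in \eqref{eq:f1-framework} over all $j\in V$. Linearity of expectation gives
\[
\E\Bigl[\sum_{t,j}\alpha_{tj}+\sum_j\beta_j\Bigr]\;\ge\;\tau\sum_{j\in V}\E[\Rew^{\OPT}_j]\;=\;\tau\,\E[\Rew^{\OPT}],
\]
where the equality uses the decomposition $\Rew^{\OPT}=\sum_j\Rew^{\OPT}_j$ from \Cref{obs:det}. Chaining this with the Budget condition yields $\E[\Rew^{\ALG}]\ge\tau\,\E[\Rew^{\OPT}]$.

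We then translate this deterministic-reward statement back to the true objective. By \Cref{obs:det}, replacing each random success reward by its expectation $p_{tj}r_j$ preserves the expected total reward for both $\ALG$ and $\OPT$. This is an optional-stopping or tower-property argument: each attempt is made on an available resource, and whether it is made is determined before the outcome is revealed. Hence $\E[\Rew^{\ALG}]$ and $\E[\Rew^{\OPT}]$ equal the expected objective values in \eqref{eq:competitive-ratio-definition}. Since the instance was arbitrary, taking the infimum gives $\CR(\ALG)\ge\tau$.

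There is essentially no hard step. The only point requiring care is making sure the expectations are taken on a common probability space, so that the sample-path variables $\alpha_{tj}$ and $\beta_j$ (which may depend jointly on $\ALG$'s and $\OPT$'s realizations) are well defined. Integrability is also needed to justify the interchange of the sum over $j$ with expectation. Both are harmless because the index sets are finite and the rewards are bounded; we will note this explicitly and otherwise treat the proof as a two-line summation.
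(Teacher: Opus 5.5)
Your proposal is correct and matches the paper's proof: sum the Cover inequality over $j\in V$, use $\Rew^{\OPT}=\sum_j \Rew^{\OPT}_j$ to identify the right-hand side as $\tau\,\E[\Rew^{\OPT}]$, and chain with the Budget inequality. The extra steps you add—invoking Remark~\ref{obs:det} to tie the deterministic-reward representation back to the true objective, and noting finiteness and integrability—are harmless clarifications that the paper leaves implicit.
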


\begin{proof}{Proof.}
	Summing the covering constraints in \eqref{eq:f1-framework} over all $j\in V$ yields
	\begin{equation}\label{eq:f1-combined}
		\begin{aligned}
			\E\!\left[\sum_{j\in V}\sum_{t\in U} \alpha_{tj}
			\;+\;\sum_{j\in V} \beta_j\right]
			&\ge \tau \sum_{j\in V} \E[\Rew^{\OPT}_j] \\[2pt]
			&= \tau \cdot \E[\Rew^{\OPT}].
		\end{aligned}
	\end{equation}
	The budget constraint in \eqref{eq:f1-framework} upper bounds the left-hand side of
	\eqref{eq:f1-combined} by $\E[\Rew^{\ALG}]$, which implies
	$\E[\Rew^{\ALG}] \ge \tau\,\E[\Rew^{\OPT}]$.
\end{proof}

\subsubsection{Tightness}\label{sec:tightness}

The LP-free certificate is tight in the following sense.

\begin{theorem}\label{thm:tight}
	If $\E[\Rew^{\ALG}] \ge \tau \cdot \E[\Rew^{\OPT}]$, then the LP-free certificate admits a feasible solution.
\end{theorem}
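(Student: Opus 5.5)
The plan is to construct an explicit feasible solution. Since the certificate is stated in terms of random variables with only expectation constraints, we have a lot of freedom. The natural choice is to ignore the $\alpha$ variables entirely and let each $\beta_j$ carry a deterministic share of $\ALG$'s total reward proportional to $\OPT$'s reward on $j$. Concretely, set $\alpha_{tj}:=0$ for all $t,j$. If $\E[\Rew^{\OPT}]>0$, define the (deterministic) quantities
\[
\beta_j := \frac{\E[\Rew^{\OPT}_j]}{\E[\Rew^{\OPT}]}\,\E[\Rew^{\ALG}] \qquad \forall j\in V.
\]
If $\E[\Rew^{\OPT}]=0$, simply set $\beta_j:=0$ for all $j$.

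First, I will verify the Budget constraint. By the decomposition $\Rew^{\OPT}=\sum_j \Rew^{\OPT}_j$ from Remark~\ref{obs:det} and linearity of expectation, the weights $\E[\Rew^{\OPT}_j]/\E[\Rew^{\OPT}]$ sum to one. Hence
\[
\E\Bigl[\sum_{t,j}\alpha_{tj}+\sum_j\beta_j\Bigr]=\E[\Rew^{\ALG}],
\]
so the budget holds with equality.

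Second, I will verify the Cover constraint. For each $j$, the hypothesis $\E[\Rew^{\ALG}]\ge\tau\,\E[\Rew^{\OPT}]$ gives
\[
\E[\beta_j]=\E[\Rew^{\OPT}_j]\cdot\frac{\E[\Rew^{\ALG}]}{\E[\Rew^{\OPT}]}\ge \tau\,\E[\Rew^{\OPT}_j].
\]
This step uses $\E[\Rew^{\OPT}_j]\ge 0$, which holds because rewards $r_j\ge 0$ and $p_{tj}\ge 0$.

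Finally, I will handle the degenerate case. If $\E[\Rew^{\OPT}]=0$, then each nonnegative term $\E[\Rew^{\OPT}_j]$ is zero, so the cover constraints read $0\ge 0$. The budget constraint reads $0\le\E[\Rew^{\ALG}]$, which holds by nonnegativity of rewards.

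There is no real obstacle here. The only points needing care are the nonnegativity of the per-resource rewards and the fact that the per-resource decomposition sums exactly to the total reward; both are supplied by Remark~\ref{obs:det}. It is also worth noting in the writeup that constant (deterministic) random variables are admissible in Theorem~\ref{thm:form1}, so this trivial-looking choice is legitimate. Together with Theorem~\ref{thm:form1}, this shows the certificate characterizes $\tau$-competitiveness exactly.
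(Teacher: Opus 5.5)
Your proof is correct and uses the same idea as the paper: set every $\alpha_{tj}=0$ and take deterministic $\beta_j$ proportional to $\E[\Rew^{\OPT}_j]$. The only difference is the scaling. The paper takes $\beta_j=\tau\,\E[\Rew^{\OPT}_j]$, which makes the cover constraints tight and uses the hypothesis for the budget constraint; this avoids dividing by $\E[\Rew^{\OPT}]$, so it needs neither your degenerate case nor the nonnegativity of $\E[\Rew^{\OPT}_j]$.
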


\begin{proof}{Proof.}
	Set $\alpha_{tj}= 0$ for all $(t,j)$ and set $\beta_j = \tau\,\E[\Rew^{\OPT}_j]$
	for all $j\in V$ (deterministic constants). Then the covering constraints hold with equality.
	 Moreover, $\E\![\sum_{j\in V}\beta_j]=\sum_{j\in V} \tau\,\E[\Rew^{\OPT}_j]\leq \E[\Rew^{\ALG}]$, so the budget constraint holds. 
%	\begin{equation}\label{eq:tight-budget-check}
%		\begin{aligned}
%			\E\!\left[\sum_{j\in\mathcal{J}}\sum_{t\in\mathcal{T}} \alpha_{jt}
%			+\sum_{j\in\mathcal{J}} \beta_j\right]
%			&= \sum_{j\in\mathcal{J}} \tau\,\E[\Rew^{\OPT}_j] \\[2pt]
%			&= \tau\,\E[\Rew^{\OPT}] \\[2pt]
%			&\le \E[\Rew^{\ALG}],
%		\end{aligned}
%	\end{equation}
\end{proof}

\subsubsection{Convex Combination Interpretation}\label{sec:convex}

In the stochastic rewards model, the LP-free covering constraint can be obtained by aggregating the classical dual constraints \eqref{eq:dual-lp} using weights induced by $\OPT$.

\begin{proposition}\label{prop:convex}
	Fix resource $j$. Multiplying each dual constraint
	$\alpha'_t + p_{tj}\beta'_j \ge p_{tj} r_j$
	by $Q^{\OPT}_{tj} := \Pr[t\in U^{\OPT}_j]$ and summing over $t\in U$ yields:
	\begin{equation}
		\begin{aligned}
		 \sum_t Q^{\OPT}_{tj}\,\alpha'_t + \frac{\beta'_j}{r_j} \underbrace{\sum_t Q^{\OPT}_{tj}\,p_{tj}r_j}_{=\,\E[\Rew^{\OPT}_j]} 
			\ge \underbrace{\sum_{t} Q^{\OPT}_{tj}\, p_{tj}r_j}_{= \E[\Rew^{\OPT}_j]},
		\end{aligned}
	\end{equation}
	Letting $\E[\alpha_{tj}]= Q^{\OPT}_{tj}\,\alpha'_t$ and $\E[\beta_j]=\frac{\beta'_j}{r_j} \,\E[\Rew^{\OPT}_j] $ gives us precisely the LP-free covering constraint for $j$ with $\tau=1$. %Here, we used the fact that $\E[\Rew^{\OPT}_j]\leq r_j$.
\end{proposition}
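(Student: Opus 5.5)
The plan is to verify the displayed aggregation directly and then read off the LP-free covering constraint of \Cref{thm:form1} by matching terms. Fix $j$ and multiply the edge constraint $\alpha'_t+p_{tj}\beta'_j\ge p_{tj}r_j$ by the nonnegative weight $Q^{\OPT}_{tj}=\Pr[t\in U^{\OPT}_j]$. The direction of the inequality is preserved, and summing over $t\in U$ gives
\[
\sum_t Q^{\OPT}_{tj}\alpha'_t+\beta'_j\sum_t Q^{\OPT}_{tj}p_{tj}\ \ge\ \sum_t Q^{\OPT}_{tj}p_{tj}r_j .
\]
Two details need attention. For non-edges we have $p_{tj}=0$, so the corresponding constraints read $\alpha'_t\ge 0$ and hold trivially. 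The middle term is written as $\frac{\beta'_j}{r_j}\sum_t Q^{\OPT}_{tj}p_{tj}r_j$, which requires $r_j>0$; if $r_j=0$, the resource contributes nothing to $\OPT$, and its cover constraint is satisfied with all variables set to zero.

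The next step is to identify $\sum_t Q^{\OPT}_{tj}p_{tj}r_j$ with $\E[\Rew^{\OPT}_j]$. This comes from the revenue decomposition \eqref{eq:rev-decomp} in Remark~\ref{obs:det}. That remark gives $\E[\Rew^{\OPT}_j]=r_j\,\E[\sum_{t\in U^{\OPT}_j}p_{tj}]$. Writing the sum as $\sum_t p_{tj}\mathbf 1\{t\in U^{\OPT}_j\}$ and applying linearity of expectation turns it into $r_j\sum_t p_{tj}\Pr[t\in U^{\OPT}_j]$. This identification is the only substantive step. It relies on $p_{tj}$ being deterministic instance data, so the indicator and the probability separate. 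This holds because $\OPT$ is non-anticipative, which is exactly the setting in which the remark was derived.

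For the final step, define random variables with the required means. Take $\alpha_{tj}:=\alpha'_t\mathbf 1\{t\in U^{\OPT}_j\}$, whose expectation is $Q^{\OPT}_{tj}\alpha'_t$. Take $\beta_j:=\frac{\beta'_j}{r_j}\E[\Rew^{\OPT}_j]$ as a deterministic constant. Substituting these into the aggregated inequality gives $\E[\sum_t\alpha_{tj}+\beta_j]\ge \E[\Rew^{\OPT}_j]$, which is the cover condition in \eqref{eq:f1-framework} with $\tau=1$. The chosen $\alpha_{tj}$ also vanish off $U^{\OPT}_j$, consistent with the base definition \eqref{eq:pseudoreward}.

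I do not expect any real obstacle. The main care is in the bookkeeping for the linearity-of-expectation identity and for the degenerate case $r_j=0$.
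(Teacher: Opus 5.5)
Your proposal is correct and follows the paper's own derivation. You weight each edge constraint by $Q^{\OPT}_{tj}\ge 0$, sum over $t$, identify $\sum_t Q^{\OPT}_{tj}p_{tj}r_j$ with $\E[\Rew^{\OPT}_j]$ via \eqref{eq:rev-decomp} and linearity of expectation, and realize the prescribed means with the explicit choices $\alpha_{tj}=\alpha'_t\mathbf{1}\{t\in U^{\OPT}_j\}$ and a constant $\beta_j$. Your extra handling of non-edges and of $r_j=0$ (where the paper silently divides by $r_j$) is a sensible addition, not a change of route.
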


We now apply this framework to analyze the Greedy algorithm under stochastic rewards, establishing the baseline analysis before extending it to the remaining settings.

\subsection{Application~I: Greedy for Stochastic Rewards}\label{sec:greedy-sr}

\subsubsection{The Greedy Algorithm \& Dual Construction}

Upon arrival $t$, Greedy matches $t$ to an available resource
\[
j^* \in \arg\max_{j\in V^{\ALG}_t} p_{tj}r_j,
\]
breaking ties arbitrarily (and rejecting $t$ if $V^{\ALG}_t=\emptyset$).

To construct certificate variables for the LP-free framework, we simulate $\ALG$ and $\OPT$ on independent sample paths and invoke the deterministic-reward representation (Remark~\ref{obs:det}). We initialize all variables to $0$ and update them online. Whenever $\ALG$ matches $t$ to $j^*$, regardless of outcome we split the deterministic reward $p_{t j^*}r_{j^*}$ into two equal parts: one part contributes to the resource variable $\beta_{j^*}$, and the other is assigned to the unique $\alpha_{tj}$ corresponding to the resource $j$ (if any) that $\OPT$ matches to arrival $t$. Concretely:
\vspace{4mm}
\begin{center}
	\begin{minipage}{0.48\textwidth}
		\begin{equation*}%\label{eq:duals-sr-alpha}
			\text{ $
				\alpha_{t j} := \begin{cases}
					\frac{1}{2}p_{t j^*}r_{j^*} & \text{if } t \in U^{\OPT}_{j} \\ 
					0 & \text{otherwise}
				\end{cases}
				$}
		\end{equation*}
	\end{minipage}
	\hfill
	\begin{minipage}{0.48\textwidth}
		\begin{equation}\label{eq:duals-sr}%\label{eq:duals-sr-beta}
			\text{ $
			\Delta\beta^t_{j^*} = \frac{1}{2}\,p_{t j^*}r_{j^*}.
				$}
		\end{equation}
	\end{minipage}
	\vspace{4mm}
\end{center}
Equivalently, the final value of $\beta_{j^*} =  \sum_{t \in U^{\ALG}_{j^*}} \tfrac{1}{2}p_{t j^*}r_{j^*} = \tfrac{1}{2}\Rew^{\ALG}_{j^*}$ on every sample path.

Note that $\alpha_{tj}$ is indexed by the resource $j$ matched to arrival $t$ by $\OPT$, which need not equal $j^*$. This choice implements the pseudo-reward interpretation: when $\OPT$ uses $j$ at $t$ but $\ALG$ assigns $t$ elsewhere, $\alpha_{tj}$ credits $\ALG$ for the value it obtains from its alternative assignment at $t$.

\subsubsection{Analysis}

\begin{lemma}[Budget]\label{lem:budget-sr}
	$\sum_{t,j}\alpha_{tj} + \sum_j\beta_j \leq \sum_{j}\Rew^{\ALG}_j = \Rew^{\ALG}$.
\end{lemma}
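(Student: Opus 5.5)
The plan is a sample-path accounting argument: fix a joint realization of both policies' outcomes and track, arrival by arrival, how much is added to the certificate variables compared with how much deterministic reward $\ALG$ collects. By Remark~\ref{obs:det}, each time $\ALG$ matches $t$ to an available $j^*$ it collects exactly $p_{tj^*}r_{j^*}$, whatever the outcome. Summing these amounts over all such $t$ gives $\sum_j \Rew^{\ALG}_j=\Rew^{\ALG}$. So it suffices to show that the per-arrival increment of the left-hand side is at most $p_{tj^*}r_{j^*}$ when $\ALG$ matches $t$, and is zero otherwise.

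Fix an arrival $t$. If $V^{\ALG}_t=\emptyset$, then $\ALG$ rejects $t$ and the rule \eqref{eq:duals-sr} creates no increments at time $t$: no $\beta$ increases, and every $\alpha_{tj}$ remains $0$ because the $\alpha$ update is triggered only by $\ALG$ matching $t$. Otherwise $\ALG$ matches $t$ to $j^*$, and two things happen.
\begin{itemize}
\item $\beta_{j^*}$ increases by $\tfrac12 p_{tj^*}r_{j^*}$.
\item At most one $\alpha_{tj}$ is set to $\tfrac12 p_{tj^*}r_{j^*}$.
\end{itemize}
The second point holds because $\OPT$ attempts to match $t$ to at most one resource, so $t\in U^{\OPT}_j$ for at most one $j$. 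Hence $\sum_j\alpha_{tj}+\Delta\beta^t_{j^*}\le p_{tj^*}r_{j^*}$. Equality holds when $\OPT$ uses $t$ on some available resource, and the sum is strictly smaller otherwise, since the $\alpha$ half is simply not assigned.

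Summing over $t$ finishes the argument. The $\beta$ parts total $\sum_j \tfrac12\Rew^{\ALG}_j$, and the $\alpha$ parts are bounded by another $\tfrac12\Rew^{\ALG}$. Together this gives the claimed pointwise inequality on every sample path, and the inequality in expectation follows.

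The only delicate point is the uniqueness of the $\OPT$ partner of $t$: each $\alpha_{tj}$ must be charged against a single half-reward of $\ALG$ at time $t$, never two. This holds because $U^{\OPT}_j$ is defined through $\OPT$'s single attempt for arrival $t$ while $j$ is available, and every arrival is matched to at most one resource. I would also note explicitly that the $\alpha$ and $\beta$ variables are defined on the coupled sample path. The inequality is therefore pointwise, with no need to take expectations inside the argument.
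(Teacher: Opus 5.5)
Your proof is correct and is essentially the paper's argument: each Greedy match contributes $\tfrac12 p_{tj^*}r_{j^*}$ to $\beta_{j^*}$ and at most $\tfrac12 p_{tj^*}r_{j^*}$ to $\sum_j\alpha_{tj}$, because $t\in U^{\OPT}_j$ for at most one $j$. One wording correction: in this section the two policies run on independent sample paths rather than coupled ones. This changes nothing, since your inequality holds pointwise on every joint realization.
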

This follows because each match performed by $\ALG$ contributes $\tfrac{1}{2}p_{t j^*}r_{j^*}$ to $\beta_{j^*}$ and contributes at most $\tfrac{1}{2}p_{t j^*}r_{j^*}$ to $\sum_{j}\alpha_{tj}$.

\begin{lemma}[Greedy lower bound]\label{lem:greedy-lb}
	If $\OPT$ matches $t$ to $j$, then $\alpha_{tj} \ge \tfrac{1}{2}\,p_{tj}r_j\cdot\one[j\in V^{\ALG}_{T+1}]$.
\end{lemma}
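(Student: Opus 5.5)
The argument is a direct sample-path case analysis on what $\ALG$ does at the moment $t$ arrives. Fix a sample path (for $\ALG$ and $\OPT$ simulated independently) and an arrival $t\in U^{\OPT}_j$. If $j\notin V^{\ALG}_{T+1}$, the indicator on the right-hand side is zero. Since $\alpha_{tj}\ge 0$ by construction, the inequality is trivial. So assume $j\in V^{\ALG}_{T+1}$, i.e., resource $j$ is still available to $\ALG$ at the end of the horizon.

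The first key step is monotonicity of availability. In the stochastic-rewards model a resource, once consumed by a successful match, never returns. Hence $j\in V^{\ALG}_{T+1}$ implies $j\in V^{\ALG}_t$, so $j$ was available to $\ALG$ when $t$ arrived. In particular $V^{\ALG}_t\neq\emptyset$, so Greedy does not reject $t$ and matches it to some $j^*\in\arg\max_{j'\in V^{\ALG}_t}p_{tj'}r_{j'}$.

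The second step uses the Greedy choice. Because $j\in V^{\ALG}_t$ is a feasible candidate, we get $p_{tj^*}r_{j^*}\ge p_{tj}r_j$. Then we read off the dual construction \eqref{eq:duals-sr}: since $t\in U^{\OPT}_j$, the variable $\alpha_{tj}$ is set to $\tfrac12 p_{tj^*}r_{j^*}$, regardless of the match outcome. Combining the two gives $\alpha_{tj}\ge\tfrac12 p_{tj}r_j$, which is the claim on this event.

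The only point needing care is the monotonicity step. In this model it holds, but it is exactly what fails for reusable resources, where a later argument would need a coupling instead. I also need to note that if $p_{tj}=0$ or $j\notin N(t)$ the bound is trivial. Everything else is immediate from the definitions.
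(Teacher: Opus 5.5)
Your proposal is correct and follows the paper's proof. The paper also treats $j\notin V^{\ALG}_{T+1}$ as trivial; otherwise $j$ is available to $\ALG$ at time $t$, so Greedy's choice gives $\alpha_{tj}=\tfrac12 p_{tj^*}r_{j^*}\ge \tfrac12 p_{tj}r_j$. You also spell out two steps the paper leaves implicit: that availability is monotone (the property that fails under reusability), and that Greedy cannot reject $t$ when $j\in V^{\ALG}_t$.
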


\begin{proof}{Proof}
	If $j\notin V^{\ALG}_{T+1}$, the bound is immediate. If $j\in V^{\ALG}_{T+1}$, then $j$ is available to $\ALG$ at time $t$. Since Greedy selects $j^*$ with $p_{t j^*}r_{j^*}\ge p_{tj}r_j$, we have
	$\alpha_{tj} = \tfrac{1}{2}p_{t j^*}r_{j^*}\ge \tfrac{1}{2}p_{tj}r_j$.
\end{proof}

\begin{theorem}\label{thm:greedy-sr}
	Greedy is $\frac{1}{2}$-competitive for online matching with stochastic rewards.
\end{theorem}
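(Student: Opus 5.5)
We apply \Cref{thm:form1} with $\tau=\tfrac12$, using the variables $\{\alpha_{tj}\}$ and $\{\beta_j\}$ from \eqref{eq:duals-sr}, built on independent sample paths of $\ALG$ and $\OPT$. The budget condition is immediate from \Cref{lem:budget-sr}: it holds on every sample path, so taking expectations gives $\E[\sum_{t,j}\alpha_{tj}+\sum_j\beta_j]\le \E[\Rew^{\ALG}]$. All the work is in the cover condition for a fixed resource $j$.

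For the cover condition, the plan splits according to whether $j$ is consumed by $\ALG$ by the end of the horizon.

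\emph{The $\beta_j$ term.} Since $\beta_j=\tfrac12\Rew^{\ALG}_j$ on every path, \eqref{eq:rev-decomp} gives
\[
\E[\beta_j]=\tfrac12\,r_j\Pr[j\notin V^{\ALG}_{T+1}].
\]

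\emph{The $\alpha$ terms.} By \Cref{lem:greedy-lb}, on every pair of sample paths,
\[
\sum_t\alpha_{tj}\ \ge\ \tfrac12\,\one[j\in V^{\ALG}_{T+1}]\sum_{t\in U^{\OPT}_j}p_{tj}r_j .
\]

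\emph{Decoupling via independence.} The indicator $\one[j\in V^{\ALG}_{T+1}]$ depends only on $\ALG$'s randomness, while $\sum_{t\in U^{\OPT}_j}p_{tj}r_j$ depends only on $\OPT$'s randomness. Because the two paths are simulated independently, the expectation of the product factors:
\[
\E\Big[\sum_t\alpha_{tj}\Big]\ \ge\ \tfrac12\Pr[j\in V^{\ALG}_{T+1}]\;\E\Big[\sum_{t\in U^{\OPT}_j}p_{tj}r_j\Big]=\tfrac12\Pr[j\in V^{\ALG}_{T+1}]\,\E[\Rew^{\OPT}_j],
\]
where the last equality is again \eqref{eq:rev-decomp}.

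\emph{Combining.} Write $P:=\Pr[j\notin V^{\ALG}_{T+1}]$. Then
\[
\E\Big[\sum_t\alpha_{tj}+\beta_j\Big]\ \ge\ \tfrac12\Big(P\,r_j+(1-P)\,\E[\Rew^{\OPT}_j]\Big).
\]
By \eqref{eq:rev-decomp}, $\E[\Rew^{\OPT}_j]=\Pr[j\notin V^{\OPT}_{T+1}]\,r_j\le r_j$, so $P\,r_j\ge P\,\E[\Rew^{\OPT}_j]$. The right-hand side is therefore at least $\tfrac12\E[\Rew^{\OPT}_j]$, which is the cover condition with $\tau=\tfrac12$. \Cref{thm:form1} then gives the result.

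The main obstacle is the decoupling step. We must make sure the coupling really makes $\ALG$'s availability of $j$ independent of $\OPT$'s set $U^{\OPT}_j$. This requires that the certificate variables, though defined on the joint space, use each policy's randomness only through that policy's own path. The construction in \eqref{eq:duals-sr} guarantees this, since $\alpha_{tj}$ reads $\OPT$ only through the indicator $t\in U^{\OPT}_j$, and reads $\ALG$ only through its choice $j^*$. In particular, \Cref{lem:greedy-lb} must be applied pathwise, with the indicator kept inside the expectation rather than taken out of it beforehand.
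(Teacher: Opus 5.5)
Your proposal is correct and matches the paper's proof essentially step for step. Like the paper, you apply \Cref{lem:greedy-lb} pathwise, factor the expectation using the independence of the two simulated paths, and then use $\E[\Rew^{\ALG}_j]=r_j\Pr[j\notin V^{\ALG}_{T+1}]$ together with $\E[\Rew^{\OPT}_j]\le r_j$ so that the two availability probabilities combine to give the cover condition with $\tau=\tfrac12$.
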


\begin{proof}{Proof}
	Fix a resource $j$ and independent sample paths for $\ALG$ and $\OPT$. By Lemma~\ref{lem:greedy-lb},
	\[
	\sum_{t\in U^{\OPT}_j}\!\alpha_{tj} + \beta_j
		\ge \tfrac{1}{2}\,\one[j\in V^{\ALG}_{T+1}]\cdot\!\!\sum_{t\in U^{\OPT}_j}\!p_{tj}r_j + \tfrac{1}{2}\,\Rew^{\ALG}_j. 
	\]
	Taking expectations over the independent sample paths yields:
	\begin{equation} \label{eq:sr-step2}
		\begin{aligned}
			\E\left[\sum_{t\in U^{\OPT}_j} \alpha_{tj} + \beta_j\right] 
			&\ge \tfrac{1}{2} \Pr[j \in V^{\ALG}_{T+1}] \cdot \E[\Rew^{\OPT}_j]  + \tfrac{1}{2} \E[\Rew^{\ALG}_j].
		\end{aligned}
	\end{equation}
	By Remark~\ref{obs:det}, $\E[\Rew^{\ALG}_j]= r_j\Pr[j\notin V^{\ALG}_{T+1}]$. Since $\E[\Rew^{\OPT}_j]\le r_j$:
	\begin{equation} \label{eq:sr-final}
		\begin{aligned}
			\text{RHS of~\eqref{eq:sr-step2}} 
			&\ge \tfrac{1}{2}\,\E[\Rew^{\OPT}_j] \cdot \bigl(\Pr[j\in V^{\ALG}_{T+1}]+\Pr[j\notin V^{\ALG}_{T+1}]\bigr) = \tfrac{1}{2}\,\E[\Rew^{\OPT}_j]. 
		\end{aligned}
	\end{equation}
	Thus the covering constraint holds with $\tau=\tfrac{1}{2}$ for every resource $j$. Together with Lemma~\ref{lem:budget-sr}, Theorem~\ref{thm:form1} implies $\E[\Rew^{\ALG}] \ge \tfrac{1}{2}\E[\Rew^{\OPT}]$.
\end{proof}

\begin{remark}[The ``Copy $\OPT$'' Perspective]\label{obs:copy}
	The proof uses only the inequality
	$\alpha_{tj}\ge\frac{1}{2}p_{tj}r_j\cdot\one[j\in V^{\ALG}_{T+1}]$,
	which corresponds to the deterministic reward $\ALG$ would obtain if it replicates $\OPT$'s action and assigns $t$ to $j$ if $j$ is available to $\ALG$ at every arrival. 
    This perspective guides the choice of certificate variables in more general settings.
\end{remark}

\subsection{Application~II: Matching with Patience}\label{sec:patience}
\subsubsection{Setting}

Each arrival $t$ has a patience parameter $\ell_t \ge 1$. Upon arrival, the algorithm selects an ordered sequence $\sigma_t = (j_1, \ldots, j_k)$ of at most $k \le \ell_t$ distinct available resources and attempts to match $t$ to them sequentially. The process begins with $j_1$; if the attempt fails (with probability $1-p_{t j_1}$), the algorithm proceeds to $j_2$, and continues in this manner. This probing sequence terminates at the first successful match or when the patience limit is reached.

The non-anticipative offline benchmark $\OPT$ knows all edge probabilities and patience parameters. It can probe edges in an arbitrary order---including interleaving matching attempts across different arrivals---subject only to the patience constraints and the condition that an arrival is successfully matched at most once. This represents the strongest conceivable offline benchmark against which meaningful competitive ratios can be established.

Because both $\ALG$ and $\OPT$ are non-anticipative with respect to edge outcomes, we continue to use the deterministic-reward representation from Remark~\ref{obs:det}: on each sample path, probing edge $(t,j)$ contributes deterministic reward $p_{tj}r_j$ when $j$ is available and the probe is actually executed (i.e., if it is reached after preceding failures), while the randomness remains in which probes are reached and which resources are ultimately consumed. Note that we continue to use $U^{\mathcal{A}}_j$ to denote the set of arrivals that $\mathcal{A}\in\{\ALG,\OPT\}$ attempted to match to $j$, i.e., by actually probing edge $(t,j)$.

\subsubsection{Greedy, Dual Construction, and Analysis}

Greedy chooses, at each arrival $t$, a feasible probing sequence (of length at most $\ell_t$ using available resources) that maximizes the conditional expected reward from $t$, where probing resource $j$ yields deterministic reward $p_{tj}r_j$ if it is reached and $j$ is available.

Guided by the ``copy $\OPT$'' perspective, we set:
\vspace{4mm}
\begin{center}
	\begin{minipage}{0.55\textwidth}
		\begin{equation*}%\label{eq:alpha-def-mini}
			\text{ $
				\alpha_{tj} := \begin{cases}
					\frac{1}{2}p_{tj}r_j\mathbf{1}[j\in V^{\ALG}_{T+1}] & \text{if } t\in U^{\OPT}_j \\ 
					0 & \text{otherwise}
				\end{cases}
				$}
		\end{equation*}
	\end{minipage}
	\hfill
	\begin{minipage}{0.40\textwidth}
		\begin{equation*}%\label{eq:beta-def-mini}
			\text{ $
				\beta_j := \tfrac{1}{2}\Rew^{\ALG}_j
				$}
		\end{equation*}
	\end{minipage}
\end{center}
\vspace{4mm}
Recall that the indicator $\mathbf{1}[j\in V^{\ALG}_{T+1}]$ captures the event that resource $j$ remains unused by $\ALG$ throughout the horizon. %; in this case, $j$ is available whenever $\OPT$ probes it, and $\alpha_{tj}$ can be interpreted as credit for ``copying'' $\OPT$ on $j$.

%The indicator $\mathbf{1}[j\in V^{\ALG}_{T+1}]$ captures the event that resource $j$ remains unused by $\ALG$ throughout the horizon; in this case, $j$ is available whenever $\OPT$ probes it, and $\alpha_{tj}$ can be interpreted as credit for ``copying'' $\OPT$ on $j$.

\begin{theorem}\label{thm:greedy-pat}
	Greedy is $\frac{1}{2}$-competitive for matching with patience.
\end{theorem}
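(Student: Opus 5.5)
We verify the two conditions of the LP-free certificate (\Cref{thm:form1}) with $\tau=\tfrac12$ for the $\alpha_{tj},\beta_j$ just defined. As in \Cref{sec:greedy-sr}, we run $\ALG$ and $\OPT$ on independent sample paths. We use the deterministic-reward representation of Remark~\ref{obs:det}, which extends verbatim to probing: $\E[\Rew^{\mathcal A}_j]=r_j\Pr[j\notin V^{\mathcal A}_{T+1}]=\E\bigl[\sum_{t\in U^{\mathcal A}_j}p_{tj}\bigr]r_j$ for $\mathcal A\in\{\ALG,\OPT\}$. Our plan is to prove the cover condition first, since it is nearly identical to \Cref{thm:greedy-sr}, and then the budget condition, which is where the patience structure matters.

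\emph{Cover.} Fix $j$. By construction,
\[
\sum_{t\in U^{\OPT}_j}\alpha_{tj}+\beta_j=\tfrac12\,\one[j\in V^{\ALG}_{T+1}]\sum_{t\in U^{\OPT}_j}p_{tj}r_j+\tfrac12\Rew^{\ALG}_j .
\]
The indicator depends only on $\ALG$'s path and the sum only on $\OPT$'s path. Taking expectations and using independence gives
\[
\tfrac12\Pr[j\in V^{\ALG}_{T+1}]\,\E[\Rew^{\OPT}_j]+\tfrac12 r_j\Pr[j\notin V^{\ALG}_{T+1}].
\]
Since $\E[\Rew^{\OPT}_j]\le r_j$, this is at least $\tfrac12\E[\Rew^{\OPT}_j]$, exactly as in \eqref{eq:sr-final}.

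\emph{Budget.} Since $\sum_j\beta_j=\tfrac12\Rew^{\ALG}$, it suffices to show, arrival by arrival, that $\E\bigl[\sum_j\alpha_{tj}\bigr]\le\tfrac12\E[\Rew^{\ALG}(t)]$, where $\Rew^{\ALG}(t)$ is the deterministic reward $\ALG$ collects from $t$. Condition on $\ALG$'s entire sample path and let $A:=V^{\ALG}_{T+1}$. Every $j\in A$ was available to $\ALG$ at time $t$, so $A\subseteq V^{\ALG}_t$. Then
\[
2\sum_j\alpha_{tj}=\sum_{j\in A,\ t\in U^{\OPT}_j}p_{tj}r_j .
\]
Averaging over $\OPT$'s independent randomness, this is at most the expected reward that $\OPT$'s probing of arrival $t$ would earn if it were credited only on resources in $A$.

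The key claim is that this quantity is at most the value of the best ordered probing sequence of length at most $\ell_t$ over resources in $A$. $\OPT$ probes $t$'s edges adaptively and may interleave them with other arrivals. However, the outcomes of $t$'s probes are independent of everything else, each probe is reached only after the earlier probes of $t$ failed, and at most $\ell_t$ edges of $t$ are probed. Hence, conditioned on the other randomness, $\OPT$'s treatment of $t$ is a mixture of ordered probing sequences $\sigma$. Each contributes $\sum_{i}p_{tj_i}r_{j_i}\one[j_i\in A]\prod_{i'<i}(1-p_{tj_{i'}})$. Dropping the non-$A$ entries only removes failure factors, which can only increase the expression; after this, each term is the value of a feasible sequence over $A$.

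Greedy maximizes the conditional expected reward over feasible sequences from $V^{\ALG}_t\supseteq A$, so this value is at most $\ALG$'s conditional expected reward from $t$. One subtlety remains: $A$ depends on $\ALG$'s future, whereas Greedy's choice depends only on the history up to $t$. We handle this by bounding $A\subseteq V^{\ALG}_t$ pointwise, so the comparison is with the maximum over sequences from $V^{\ALG}_t$, which Greedy attains. Taking expectations and summing over $t$ then gives the budget condition, and \Cref{thm:form1} yields $\tfrac12$-competitiveness.

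The main obstacle is the decoupling claim that $\OPT$'s adaptive, interleaved probing of a single arrival is dominated by the best ordered sequence. We will argue it by the mixture argument above, making explicit the conditioning on the outcomes of other arrivals' probes.
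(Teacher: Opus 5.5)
Your proposal is correct and follows the paper's proof: the cover step uses the same complementarity of $\one[j\in V^{\ALG}_{T+1}]$ and $\Rew^{\ALG}_j$ under independent sample paths, and the budget step uses the same per-arrival comparison via $V^{\ALG}_{T+1}\subseteq V^{\ALG}_t$ and the fact that $\OPT$'s (adaptive, interleaved) probes of $t$, restricted to resources available to $\ALG$, form a feasible probing policy that Greedy dominates. Your mixture-of-ordered-sequences argument and your explicit treatment of the measurability of $A$ spell out details that the paper leaves implicit when it conditions on the other arrivals' realizations.
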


\begin{proof}{Proof.}
	\emph{Covering constraint.} The covering argument is identical to the stochastic-rewards case: in expectation the indicator $\mathbf{1}[j\in V^{\ALG}_{T+1}]$ and the term $\Rew^{\ALG}_j$ are complementary, and together yield a lower bound of $\tfrac{1}{2}\E[\Rew^{\OPT}_j]$ for every resource $j$.
	
	\medskip\noindent\emph{Budget constraint.} It suffices to show that for every arrival $t \in U$:
	\begin{equation}\label{eq:pat-budget-goal}
		\begin{aligned}
			\E\left[\sum_{j\in V}\alpha_{tj}\right]&= \tfrac{1}{2}\,\E\left[\sum_{j\in V} p_{tj}r_j\, \one[t\in U^{\OPT}_j] \one[j\in V^{\ALG}_{T+1}]\right] \\[5pt]
			&\le \tfrac{1}{2}\,\E\left[\sum_{j\in V} p_{tj}r_j\, \one[t\in U^{\ALG}_j]\right].
		\end{aligned}
	\end{equation}
	This inequality follows from the defining property of Greedy: at each arrival $t$, it selects a feasible probing policy that maximizes the conditional expected reward from $t$, where probing $j$ yields deterministic reward $p_{tj}r_j$ if it is reached and $j$ is available. To see this, condition on arbitrary edge realizations for all arrivals other than $t$, and let $\E_t$ denote expectation with respect to the (independent) realizations of edges incident to $t$. Under this conditioning, the set of resources that remain available in $\ALG$ when $t$ arrives, namely $V^{\ALG}_{t}$, is fixed. 
	The (possibly adaptive) sequence of probes that $\OPT$ executes for arrival $t$ using resources in $V^{\ALG}_{t}$ is feasible for $\ALG$ at $t$, and therefore its expected reward is at most that of Greedy, yielding:
	\begin{equation}\label{eq:greedy-patience}
		\begin{aligned}
			\E_t\left[\sum_{j\in V} p_{tj}r_j\, \one[t\in U^{\OPT}_j]\, \one[j\in V^{\ALG}_{T+1}]\right] 
				&\le \E_t\left[\sum_{j\in V} p_{tj}r_j\, \one[t\in U^{\OPT}_j]\, \one[j\in V^{\ALG}_{t}]\right] \\
			&\le \E_t\left[\sum_{j\in V} p_{tj}r_j\, \one[t\in U^{\ALG}_j]\right].
		\end{aligned}
	\end{equation}
	Applying the tower property establishes \eqref{eq:pat-budget-goal}.
\end{proof}

\begin{remark}[Approximate Greedy]\label{rem:approx-greedy}
	If optimization over probe sequences is computationally prohibitive, employing an $\eta$-approximation algorithm at each arrival degrades the competitive ratio to $\frac{\eta}{1+\eta}$. By scaling $\alpha_{tj}$ and $\beta_j$ by $\frac{\eta}{1+\eta}$, the budget constraint absorbs the approximation loss while the covering argument remains unchanged.
\end{remark}

\subsubsection{Extensions Beyond Matching with Patience}\label{sec:beyond-patience}
The framework extends to a broad class of models that enrich the within-arrival interaction between an arrival and the resource set. This includes: (i) \emph{stochastic patience}, where the number of probes $\ell_t$ is random; (ii) \emph{sequential assortments}, where the algorithm shows a sequence of resource subsets (assortments) to each arrival; (iii) \emph{multi-purchase} or other multi-outcome choice processes, where an arrival may select multiple resources before terminating; and (iv) \emph{two-sided matching and assortment optimization} \citep{aouad2023assortment}, where each resource $j$ derives a non-linear value from its final set of \emph{successfully} matched arrivals. More generally, the interaction between the algorithm and an arrival can be represented by a decision tree: the algorithm selects an action, the arrival produces a random response, and the algorithm adapts its subsequent decisions to the history of responses for that arrival. Throughout, we maintain the standard assumptions that responses are independent across arrivals and that both $\ALG$ and $\OPT$ are non-anticipative with respect to the outcomes of each action (only know outcomes distribution prior to selecting an action).

All of these extensions can be handled by the same proof template. The main step to verify is the per-arrival budget comparison (cf.\ \eqref{eq:pat-budget-goal}). In stochastic patience, sequential assortments, and multi-purchase models, the critical observation is that the expected revenue contribution of arrival $t$ can still be expressed as a sum of marginal expected rewards across resources, under any ``within-arrival" policy governing the sequence of actions and responses at each arrival. Greedy is defined to select, at each arrival, a within-arrival policy that maximizes this conditional expected contribution subject to feasibility with respect to the currently available set $V^{\ALG}_t$.

Formally, consider a within-arrival policy $a \in \mathcal{A}_t$ at $t$. Let $p_{tja}$ denote the marginal probability that (an available) resource $j$ is successfully selected by $t$ under $a$. For any non-anticipative policy, the expected reward from arrival $t$ admits the decomposition $\E_t[\Rew_t(a)] = \sum_j p_{tja}r_j$. Consequently, using the dual construction with $p_{tja}r_j$ in place of $p_{tj}r_j$, the per-arrival budget step reduces to establishing that Greedy's chosen policy $a^* \in \mathcal{A}_t$ dominates $\OPT$'s within-arrival policy at $t$ after restricting attention to the resources feasible for $\ALG$, namely
\begin{equation}\label{eq:generalized-budget}
	\E_t\Big[\sum_{j\in V_t^{\ALG}} p_{tja}r_j\Big] \;\le\; \E_t\Big[\sum_{j\in V} p_{tja^*}r_j\Big],
\end{equation}
where $a$ is the within-arrival policy induced by $\OPT$ at arrival $t$ (noting that $\OPT$ may interleave actions across arrivals). Inequality \eqref{eq:generalized-budget} follows under the same two requirements as in the patience model: (1) the restriction of $a$ to the available set $V^{\ALG}_t$ constitutes a feasible within-arrival policy for $\ALG$, and (2) by non-anticipativity and independence across arrivals, $\OPT$ has no additional information about the realization of the within-arrival interaction beyond what is revealed through probing the arrival.

Two-sided objectives fit into the same scheme with one modification: the ``marginals'' are no longer success probabilities but marginal values under a (typically) submodular resource objective. Specifically, in two-sided environments each resource $j$ derives value $f_j(\mathcal{S}^{\mathcal{A}}_{j})$ from its final set of successfully matched arrivals $\mathcal{S}^{\mathcal{A}}_{j}$. Under standard assumptions, $f_j$ is monotone submodular, and unit rewards are replaced by marginal contributions
$f_j(t \mid S) = f_j(S \cup \{t\}) - f_j(S)$.
The certificate variables are defined using these marginals:
$\alpha_{tj}=\tfrac{1}{2}f_j(t\mid \mathcal{S}^{\ALG}_{j})\one[t\in \mathcal{S}^{\OPT}_j]$ and $\beta_j = \tfrac{1}{2} f_j(\mathcal{S}^{\ALG}_{j})$.
The per-arrival budget step is verified by the same dominance logic, with marginal values replacing probabilities. The covering step is also unchanged in structure, and yields, for any resource $j$,
\begin{align}
	\E\bigg[\sum_{t\in U}\!\alpha_{tj} + \beta_j\bigg]\nonumber
	&\;\ge\; \frac{1}{2} \, \E\Bigg[ \sum_{t\in\mathcal{S}^{\OPT}_j} f_j\!\left(t \mid \mathcal{S}^{\ALG}_{j}\right) + f_j\!\left(\mathcal{S}^{\ALG}_{j}\right) \Bigg] \nonumber \\
	&\;\ge\; \frac{1}{2} \, \E\Big[ f_j\!\left(\mathcal{S}^{\OPT}_{j}\right) \Big], \nonumber
\end{align}
here the final inequality follows from monotonicity and submodularity.

\subsection{Application~III: Stochastic Reusability}\label{sec:reusability}
\label{sec:reusable}
\subsubsection*{Problem setting}

%	Here, we evaluate \emph{reusable} resources. 
When arrival $t$ is matched successfully to resource $j$, the match yields reward $r_j$ and resource $j$ becomes unavailable for a random duration $D\sim \mathcal{D}_j$, drawn independently each time the resource is used. After duration $D$ elapses, the resource returns and may be used again. The algorithm observes return events but does not know realized durations in advance. For simplicity, we take match success probabilities to be $1$, so the only randomness arises through the durations. The offline benchmark $\OPT$ knows the graph and the distributions $\{\mathcal{D}_j\}$ and processes arrivals in chronological order.

\subsubsection{Greedy, Dual Construction, and the New Challenge}\label{sec:reuse-challenge}
We define Greedy as the policy that, at each arrival, matches to an arbitrary available neighboring resource if one exists (and rejects otherwise). We maintain the same ``copy $\OPT$'' template as in the previous sections. In the present model, each successful match yields deterministic reward $r_j$, and the difficulty lies in relating $\ALG$ and $\OPT$ through resource availability over time. We set:
%\[
%\alpha_{jt} := \begin{cases}
%	\frac{1}{2}\cdot\one[j\in\mathcal{J}^{\ALG}_t] 
%	& \text{if } t\in\mathcal{T}^{\OPT}_j,\\[2pt] 
%	0 & \text{otherwise,}
%\end{cases}
%\qquad
%\beta_j := \tfrac{1}{2}\,|\mathcal{T}^{\ALG}_j|
%= \tfrac{1}{2}\,\Rew^{\ALG}_j.
%\]
\vspace{4mm}
\begin{center}
	\begin{minipage}{0.55\textwidth}
		\begin{equation*}%\label{eq:alpha-def-mini}
			\text{ $
				\alpha_{tj} := \begin{cases}
					\frac{1}{2}r_j\mathbf{1}[j\in V^{\ALG}_{t}] & \text{if } t\in U^{\OPT}_j \\ 
					0 & \text{otherwise}
				\end{cases}
				$}
		\end{equation*}
	\end{minipage}
	\hfill
	\begin{minipage}{0.40\textwidth}
		\begin{equation*}%\label{eq:beta-def-mini}
			\text{ $
				\beta_j := \tfrac{1}{2}r_j\,|U^{\ALG}_j|
				= \tfrac{1}{2}\,\Rew^{\ALG}_j.
				$}
		\end{equation*}
	\end{minipage}
\end{center}
\vspace{4mm}

Using these variables, we establish the following guarantee.

\begin{theorem}\label{thm:greedy-reuse}
Greedy is $\frac{1}{2}$-competitive for online matching with stochastic reusability.
\end{theorem}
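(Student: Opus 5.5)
We verify the two conditions of the LP-free certificate (\Cref{thm:form1}) with $\tau=\tfrac12$, using the variables $\alpha_{tj},\beta_j$ defined above. As in \Cref{sec:greedy-sr}, the Budget condition will hold on every sample path. All of the new difficulty is in the Cover condition, where availability is no longer monotone: a resource that is busy for $\ALG$ at time $t$ may return later. The plan for Cover is a sample-path coupling of usage durations between $\ALG$ and $\OPT$.

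\emph{Budget.} Fix an arrival $t$. At most one $j$ has $t\in U^{\OPT}_j$, so $\sum_j\alpha_{tj}\le \tfrac12 r_j\mathbf 1[j\in V^{\ALG}_t]$ for that $j$. If $j\in V^{\ALG}_t$, then $t$ has an available neighbor, so Greedy matches $t$ to some $j^*$.
\begin{itemize}
\item I will read Greedy as choosing the available neighbor of largest $r_{j^*}$. When rewards are uniform this is the same as the arbitrary choice in the statement.
\item Under that reading, $\tfrac12 r_j\le\tfrac12 r_{j^*}$.
\end{itemize}
Thus each arrival's $\alpha$-mass is at most half of $\ALG$'s reward from that arrival. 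Since $\beta_j=\tfrac12\Rew^{\ALG}_j$, summing over $t$ and $j$ gives $\sum_{t,j}\alpha_{tj}+\sum_j\beta_j\le \Rew^{\ALG}$ pathwise.

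\emph{Cover: reduction.} Write $\E[\Rew^{\OPT}_j]=r_j\,\E|U^{\OPT}_j|$. The Cover condition for $j$ then becomes
\[
\E\bigl|\{t\in U^{\OPT}_j:\ j\notin V^{\ALG}_t\}\bigr|\;\le\;\E|U^{\ALG}_j| .
\]
To prove this, charge each $\OPT$ match $t$ counted on the left to the most recent $\ALG$ match of $j$ before $t$. If $j\notin V^{\ALG}_t$, that $\ALG$ match occurred at some time $s<t$ with realized duration $D$ such that $t\in(s,s+D)$. These $\ALG$ busy intervals of $j$ are disjoint. It therefore suffices to show that, in expectation, each $\ALG$ busy interval $(s,s+D)$ receives at most one $\OPT$ match of $j$.

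\emph{Cover: the coupling (main obstacle).} In general a single $\ALG$ busy interval can contain many $\OPT$ uses of $j$, since $\OPT$'s own durations might be short. This is where I expect the real work to be, and I would resolve it by coupling durations as follows.
\begin{itemize}
\item Whenever $\OPT$ makes its \emph{first} match of $j$ inside an $\ALG$ busy interval $(s,s+D)$, say at time $t'$, assign that $\OPT$ usage the duration $D$, i.e., the same realized value as $\ALG$'s.
\item All other $\OPT$ usages receive fresh independent draws.
\item Under this assignment, $\OPT$ holds $j$ until $t'+D\ge s+D$, so no further $\OPT$ match of $j$ can occur in that interval. The count per interval is then at most one on every coupled sample path.
\end{itemize}
The delicate point is to check that this coupling preserves the law of $\OPT$'s trajectory, so that expectations are unchanged. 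I would argue this as follows.
\begin{itemize}
\item Each $\ALG$ duration is assigned to at most one $\OPT$ usage, because the intervals are disjoint and only the first $\OPT$ match in each receives it.
\item $\ALG$'s sample path is independent of $\OPT$'s.
\item $\OPT$ is non-anticipative and does not observe $D$ when it matches at $t'$. Conditional on $\OPT$'s history up to $t'$, the assigned duration is therefore still a fresh $\mathcal D_j$ draw.
\end{itemize}
Hence $\OPT$'s marginal behavior, and in particular $\E[\Rew^{\OPT}_j]$, is unchanged, while $\ALG$'s path is untouched. The displayed inequality then holds pathwise under the coupling, so Cover holds with $\tau=\tfrac12$, and \Cref{thm:form1} gives the theorem.
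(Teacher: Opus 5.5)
\textbf{The coupling is the gap.} Your Budget step is fine. Reading Greedy as picking the available neighbor of largest $r_j$ is the right repair, since $\alpha_{tj}$ carries $\OPT$'s reward $r_j$. Your reduction of Cover to $\E\bigl|\{t\in U^{\OPT}_j: j\notin V^{\ALG}_t\}\bigr|\le \E|U^{\ALG}_j|$ is also correct and matches Lemma~\ref{lem:coupling}.

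The problem is where you claim the coupling preserves $\OPT$'s law. Your rule transplants $\ALG$'s realized duration $D$ to $\OPT$'s use at $t'$ only if $t'\in(s,s+D)$, that is, only on the event $D>t'-s$. The trigger depends on the very value being transplanted. So the transplanted duration is distributed as $D$ conditioned on $D>t'-s$, which is stochastically inflated, not a fresh $\mathcal D_j$ draw. Non-anticipativity of $\OPT$ does not help, because the bias comes from the selection rule, not from what $\OPT$ observes.

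Concretely, let $\mathcal D_j$ put mass $\tfrac12$ on $1$ and $\tfrac12$ on $10$. Let $\ALG$ use $j$ at time $0$ and not again before time $5$, and let $\OPT$ first use $j$ at time $5$. The transplant happens iff $D=10$, so $\OPT$'s duration is $10$ with probability $\tfrac12+\tfrac14=\tfrac34$. After a transplant, $\OPT$'s history is also no longer independent of $\ALG$'s path, so your second bullet fails for later uses. Under your coupling $\OPT$ is blocked longer than it should be. Your pathwise inequality is therefore checked against a deflated $\E[\Rew^{\OPT}_j]$, and Theorem~\ref{thm:form1} says nothing about the true benchmark.

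\textbf{How the paper fixes this.} It couples by index rather than by value. For each $j$ fix a common i.i.d.\ list $D^{(1)}_j,D^{(2)}_j,\dots$. $\OPT$'s $k$-th use takes $D^{(k)}_j$, and each $\ALG$ use takes the first list element not yet consumed by either policy. Which element a use receives depends only on the history of usage counts, never on the value assigned. The consumed set is always a prefix of the list, and $\ALG$'s new element is untouched by anyone, so both marginals are exact.

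Under this coupling a single $\ALG$ busy interval can contain several missed $\OPT$ uses, so your ``at most one per interval'' charge to the most recent $\ALG$ match must be dropped. Instead, map $\OPT$'s missed $k$-th use at $t$ to the $\ALG$ use that consumed $D^{(k)}_j$. Suppose $t$ is blocked by the $\ALG$ use at $s$, which took element $m$.
\begin{enumerate}
\item $k\le m$. Otherwise $\OPT$'s $m$-th use, which occurs at a time $\ge s$, holds $j$ past $s+D^{(m)}_j$, and so $t$ could not be blocked.
\item $\OPT$ had not consumed element $k$ by time $s$. Since the consumed set after $s$ is $\{1,\dots,m\}$, $\ALG$ consumed element $k$ at or before $s$.
\end{enumerate}
Distinct $k$ give distinct $\ALG$ uses, so the injection holds pathwise and Cover follows.
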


We sketch the main idea behind the proof of this theorem. Note that the budget constraint follows directly from reward splitting: each successful match to resource $j$ contributes $r_j$ in total across $\alpha$ and $\beta$. In fact, the $\sum_{j\in V}\alpha_{tj}$ may be strictly less than $\tfrac{1}{2}r_{j^*}$ even when $\ALG$ matches $t$ to some available $j^*$, due to the additional indicator $\one[j\in V^{\ALG}_t]$.

The main issue is the covering constraint. Expanding the pseudo-reward of resource $j$ yields
\begin{equation}\label{eq:reuse-cover-expand}
	\begin{aligned}
		\sum_{t\in U} \alpha_{tj} + \beta_j= \frac{1}{2}r_j \bigg( \underbrace{\sum_{t \in U} \bm{1}[t \in U_j^{\OPT}, j \in V_t^{\ALG}]}_{\text{(I): $\OPT$ uses $j$ while available in $\ALG$}} + \underbrace{|U^{\ALG}_j|}_{\text{(II): $\ALG$ uses $j$}} \bigg).
	\end{aligned}
\end{equation}
Unlike the stochastic-rewards and patience settings, the two terms in \eqref{eq:reuse-cover-expand} are not complementary under independent sampling: a single long busy period in $\ALG$ can substantially reduce Term (I) while leaving Term (II) unchanged. To obtain a resource-by-resource lower bound, we therefore analyze $\ALG$ and $\OPT$ under a carefully chosen \emph{coupling} of their duration samples.

\begin{definition}[Duration coupling]\label{def:coupling}
	For each resource $j$, fix an infinite i.i.d.\ sequence (a ``common list'')
	\[
	D_j^{(1)}, D_j^{(2)}, D_j^{(3)},\ldots \stackrel{\text{i.i.d.}}{\sim} \mathcal{D}_j.
	\]
	Under the coupling, the $k$-th use of $j$ by $\OPT$ has duration $D_j^{(k)}$. Each time $\ALG$ uses $j$, it has duration equal to the first ``unused" element of the common list---skipping any element that has already been consumed by $\OPT$ (and $\ALG$ itself). This defines a joint distribution under which each algorithm's marginal duration sequence is i.i.d.\ with law $\mathcal{D}_j$.
\end{definition}

\begin{lemma}[Coupling inequality]\label{lem:coupling}
Under the coupling in Definition~\ref{def:coupling}, for every resource $j$ and every coupled sample path:
\begin{equation}\label{eq:coupling}
	\sum_{t \in U} 
	\one\{t \in U_j^{\OPT},\; j \in V_t^{\ALG}\}
	\;+\; |U_j^{\ALG}|
	\;\ge\; |U_j^{\OPT}|.
\end{equation}
\end{lemma}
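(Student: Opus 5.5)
The plan is a pathwise charging argument for a fixed resource $j$ and a fixed coupled sample path. Partition $U_j^{\OPT}$ into the arrivals $t$ with $j\in V_t^{\ALG}$, which are counted by the first term of \eqref{eq:coupling}, and the set $B_j$ of arrivals $t\in U_j^{\OPT}$ with $j\notin V_t^{\ALG}$. It then suffices to build an injection $\phi:B_j\to U_j^{\ALG}$. For $t\in B_j$, resource $j$ is busy in $\ALG$ at time $t$, so there is a unique most recent $\ALG$ use $s<t$ of $j$ whose busy interval $[s,s+D)$ contains $t$. The plan is to set $\phi(t):=s$. Injectivity of $\phi$ is then the statement that each $\ALG$ busy interval of $j$ contains the start of \emph{at most one} $\OPT$ use of $j$.

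To prove this, track the indices of the common list. Let $c$ be the number of $\OPT$ uses of $j$ that start before $s$. By Definition~\ref{def:coupling}, $\ALG$'s use at $s$ takes the first list element not yet consumed by either policy, say index $k$. Thus $k\ge c+1$, and every index in $\{c+1,\dots,k-1\}$ was consumed by earlier $\ALG$ uses. Let $t_1$ be the first $\OPT$ use of $j$ inside $(s,s+D^{(k)})$. It is $\OPT$'s $(c+1)$-st use and carries duration $D^{(c+1)}$.

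I would split into two cases. If $c+1=k$, both policies hold $j$ for the same duration $D^{(k)}$ starting at $s$ and at $t_1>s$ respectively. Hence $\OPT$ keeps $j$ busy past $s+D^{(k)}$, and no further $\OPT$ use of $j$ can start inside $\ALG$'s interval. If $c+1<k$, element $c+1$ was previously used by $\ALG$ at some $s'<s$ with the same duration. I would then want an ordering/induction argument in time showing that this case cannot produce a second $\OPT$ use in the interval. The idea is that $\OPT$ is "behind" in the list exactly when $\ALG$ has consumed elements $\OPT$ has not yet reached, and $\OPT$'s later uses then replay durations that $\ALG$ already experienced.

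The main obstacle is this second case, i.e. the interleaving of list indices between the two policies. To handle it, I would first try to strengthen the claim into an inductive invariant over arrival times. One candidate invariant is that, at every time, the number of $\OPT$ uses of $j$ in $B_j$ so far is at most the number of $\ALG$ uses so far, with equality only when the last $\OPT$ duration index equals $\ALG$'s current index. I would then verify that this invariant is preserved at each $\ALG$ use, each $\OPT$ use, and each return event. If the invariant holds, it gives $|B_j|\le |U_j^{\ALG}|$ directly, and adding the first term yields \eqref{eq:coupling} without needing an explicit injection.
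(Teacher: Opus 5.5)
Your reduction fails at the injection $\phi(t):=s$. The claim that each $\ALG$ busy interval of $j$ contains the start of at most one $\OPT$ use is false. It fails exactly in your case $c+1<k$, so no induction in time can establish it.

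For a counterexample, take $D^{(1)}=10$ and $D^{(2)}=100$.
\begin{itemize}
\item $\ALG$ uses $j$ at times $1$ and $12$, taking indices $1$ and $2$. It is therefore busy on $[12,112)$.
\item $\OPT$ first uses $j$ at time $13$ (index $1$, busy until $23$) and again at time $24$ (index $2$).
\end{itemize}
Both $\OPT$ uses are missed, and both are blocked by the $\ALG$ use at $12$, so $\phi(13)=\phi(24)=12$. In general, all of $\OPT$'s uses with indices $c+1,\dots,k$ can start inside a single $\ALG$ interval, because those with index below $k$ replay durations that $\ALG$ consumed earlier.

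Your fallback invariant is offered only as a candidate and is never verified. As literally stated, its equality clause can fail: after one missed $\OPT$ use, let $\ALG$ stay away from $j$ while $\OPT$ makes a non-missed use with the fresh index $2$. Then the counts are equal but the indices differ. So the proposal does not yet prove the lemma.

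The missing idea, which is what the paper uses, is to charge by list index rather than by busy interval. Send a missed $\OPT$ use that draws $D^{(c+1)}$ to the $\ALG$ use that consumed element $c+1$; in the example this gives $13\mapsto 1$ and $24\mapsto 12$. Injectivity is then automatic, since distinct $\OPT$ uses draw distinct indices and $\ALG$ consumes each index at most once.

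What must be shown is that $\ALG$ did consume element $c+1$. Your reasoning in the case $c+1=k$ is exactly the tool:
\begin{itemize}
\item The consumed elements always form a prefix of the list.
\item Let $s$ be the blocking $\ALG$ use, with index $k$, and suppose $c\ge k$. Then $\OPT$'s $k$-th use starts no earlier than $s$ and has the same duration $D^{(k)}$.
\item Since $\OPT$ is free at $t$, this forces $t\ge s+D^{(k)}$, contradicting that $s$ still blocks $j$ at $t$. Hence $c+1\le k$.
\item At time $s$, $\OPT$ had consumed only $\{1,\dots,c_s\}$ with $c_s\le c$. So every index in $\{c_s+1,\dots,k\}$, in particular $c+1$, was consumed by $\ALG$ no later than $s$.
\end{itemize}
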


\begin{proof}{Proof.}
We partition $U^{\OPT}_j$ into two disjoint subsets: arrivals $t$ at which $j\in V^{\ALG}_t$ (counted in Term (I)) and arrivals $t$ at which $j\notin V^{\ALG}_t$ (``missed'' uses of $j$ by $\OPT$). It suffices to injectively map each missed $\OPT$ use to a distinct use of $j$ by $\ALG$, which implies
\[
\bigl|\{t \in U_j^{\OPT} : j \notin V_t^{\ALG}\}\bigr|
\;\le\; |U^{\ALG}_j|.
\]

Consider a missed $\OPT$ use at arrival $t$. Then $j$ is unavailable in $\ALG$ at time $t$ because an earlier $\ALG$ use at some time $s<t$ is still blocking it. Under the coupling, $\OPT$ consumes the common-list samples in order, while $\ALG$ consumes the first unused sample at each use. Consequently, the duration sample used by $\OPT$ at $t$ must correspond to a common-list element that was previously consumed by $\ALG$ no later than time $s$. Map the missed $\OPT$ use at $t$ to that earlier $\ALG$ consumption event. Distinct missed $\OPT$ uses correspond to distinct common-list elements and therefore map to distinct $\ALG$ uses, yielding the required injection. We provide an example in Figure~\ref{fig:coupling-proof}.
\end{proof}

% %\begin{proof}{Proof of Theorem~\ref{thm:greedy-reuse}.}
% Fix resource $j$ and consider the coupled sample paths. Combining \eqref{eq:reuse-cover-expand} with Lemma~\ref{lem:coupling} shows that
% $\sum_{t\in U} \alpha_{tj} + \beta_j \ge \tfrac{1}{2}r_j\,|U^{\OPT}_j|$
% on every coupled sample path, which verifies the covering constraint for $j$ with $\tau=\tfrac{1}{2}$. As discussed above, the budget constraint holds by construction. Applying the LP-free certificate (Theorem~\ref{thm:form1}) completes the proof.
% \end{proof}

\begin{figure}
\centering
\begin{tikzpicture}[>=Stealth, xscale=0.68, yscale=0.85]
	% ---- Common list ----
	\node[font=\small, anchor=east] at (-0.3, 4.2) {\textbf{Common list:}};
	\foreach \i/\v in {1/2, 2/3, 3/5, 4/3} {
		\node[draw, fill=yellow!20, minimum width=0.75cm, 
		minimum height=0.45cm, font=\small] (D\i) at (\i*1.6+0.5, 4.2) {$\v$};
		\node[above=0pt of D\i, font=\scriptsize] {$D^{(\i)}$};
	}
	
	% ---- OPT timeline ----
	\node[font=\small, anchor=east, optblue] at (-0.3, 2.6) 
	{\textbf{Left side ($\OPT$):}};
	\draw[->, thick] (0.3,2.6) -- (13,2.6);
	
	% Intervals for OPT
	\draw[optblue, very thick] (1,2.78) -- (3,2.78);
	\node[above=0pt, font=\scriptsize, optblue] at (2,2.78) {$2$};
	\draw[optblue, very thick] (4,2.78) -- (7,2.78);
	\node[above=0pt, font=\scriptsize, optblue] at (5.5,2.78) {$3$};
	\draw[optblue, very thick] (8,2.78) -- (13,2.78);
	\node[above=0pt, font=\scriptsize, optblue] at (10.5,2.78) {$5$};
	\node[right, font=\scriptsize] at (13.1, 2.6) 
	{$|U^{\OPT}_j|{=}3$};
	
	% ---- ALG timeline ----
	\node[font=\small, anchor=east] at (-0.3, 0.3) {\textbf{Right side ($\OPT$ \& $\ALG$):}};
	\draw[->, thick] (0.3,0.3) -- (13,0.3);
	
	% ALG matches (s_i)
	\fill[alggreen] (2.5,0.3) circle (3pt);
	\node[below=2pt, font=\scriptsize, alggreen] at (2.5,0.3) {$s_1$};
	\draw[alggreen, very thick] (2.5,0.12) -- (5.5,0.12);
	\node[below=0pt, font=\scriptsize, alggreen] at (4,0.12) {$D^{(2)}{=}3$};
	
	\fill[alggreen] (8.5,0.3) circle (3pt);
	\node[below=2pt, font=\scriptsize, alggreen] at (8.5,0.3) {$s_2$};
	\draw[alggreen, very thick] (8.5,0.12) -- (11.5,0.12);
	\node[below=0pt, font=\scriptsize, alggreen] at (10,0.12) {$D^{(4)}{=}3$};
	
	% OPT arrival dots (t_i) moved to ALG line
	\fill[optblue] (1,0.3) circle (3pt);
	\node[above=2pt, font=\scriptsize, optblue] at (1,0.3) {$t_1$};
	
	\fill[optblue] (4,0.3) circle (3pt);
	\node[above=2pt, font=\scriptsize, optblue] at (4,0.3) {$t_2$};
	
	\fill[optblue] (8,0.3) circle (3pt);
	\node[above=2pt, font=\scriptsize, optblue] at (8,0.3) {$t_3$};
	
	% ---- Availability annotations on OPT matches ----
	\node[above=0.45cm, font=\scriptsize, alggreen] at (1,0.3) {\checkmark};
	\node[above=0.45cm, font=\scriptsize, red] at (4,0.3) {$\times$};
	\node[above=0.45cm, font=\scriptsize, alggreen] at (8,0.3) {\checkmark};
	
	% ---- Injection arrow ----
	\draw[->, thick, red!65!black, dashed] 
	(4, -0.35) to[out=-90,in=-90,looseness=0.6] 
	node[below=2pt, font=\scriptsize, pos=0.5]{injection} 
	(2.5, -0.35);
	
	% ---- Counting summary ----
	\node[draw, rounded corners, fill=gray!8, font=\small, 
	text width=10cm, align=center] at (6.5, -1.6) {
		Term (I): \textcolor{optblue}{$t_1,t_3$ counted} $= 2$
		\quad$+$\quad 
		Term (II): \textcolor{alggreen!80!black}{$s_1,s_2$ counted} $= 2$
		\quad$= 4 \;\ge\; 3 =$ 
		\textcolor{optblue}{$|U^{\OPT}_j|$}
	};
\end{tikzpicture}
\caption{Coupling visualization for proving \eqref{eq:coupling}. We use decomposition introduced in \eqref{eq:reuse-cover-expand}. At $t_1$ and $t_3$, resource $j$ is available in $\ALG$ (\textcolor{alggreen}{\checkmark}), explicitly contributing to Term (I) on the left hand side of \eqref{eq:coupling}. At $t_2$, $j$ is occupied (\textcolor{red}{$\times$}); the dashed injection maps this blocked match to $\ALG$'s active busy period started at $s_1$. The aggregate explicitly covers the right hand side.}
\label{fig:coupling-proof}
\end{figure}

%\subsection{Competitive Ratio}
% ...

\begin{remark}[{Further Applications}]
The LP-free framework was originally developed to analyze advanced policies that outperform Greedy in online matching with stochastic rewards \citep{goyal2023stochastic} and the online allocation of reusable resources \citep{goyal2025reusable}. Beyond the core settings explored here, the framework applies to a wide array of environments where fluid LP relaxations of the offline benchmark prove intractable or overly loose. For instance, it has been leveraged to analyze algorithms for platforms with multi-channel traffic \citep{manshadi2024multichannel}, two-sided assortment optimization \citep{aouad2023assortment}, Adwords with unknown advertiser budgets \citep{udwani2025adwords}, overbooking with delayed purchases \citep{fengsimple}, and constrained-pricing problems \citep{goyal2022pricing}. Notably, the framework's utility is not confined to stochastic environments. For example, in \citet{udwani2025adwords}, it facilitates the analysis of a randomized algorithm by offering a constraint system where a feasible solution is significantly easier to construct.
\end{remark}

%% file: tex/open.tex
The two parts of this tutorial point to a common theme: in adversarial online resource allocation, the main challenge is to identify the right certificate system for the model at hand. In classical deterministic models, an offline LP together with a fitted dual solution often suffices. In richer stochastic models, the harder question is whether one should strengthen the LP relaxation, abandon it altogether, or compare the online policy against the benchmark through a different family of certificates. Seen this way, Part~I and Part~II offer two complementary ways to organize a competitive-ratio proof: the LP-based convex-programming route works best when the benchmark has a clean packing relaxation and the right online state can be regularized effectively, whereas the LP-free route works best when post-allocation stochasticity makes the benchmark inherently dynamic or when natural LP relaxations are loose. The open problems below illustrate this broader agenda and the larger challenge of identifying, for each model, the simplest certificate system that still yields the right competitive ratio.

\smallskip
\xhdr{Integral algorithms from convex regularizers.}
All results in Part~I are stated for fractional algorithms. Can the convex-programming-based approach be extended to integral allocations? A concrete open problem comes from \citet{feng2024batching}, who use this technique to obtain a competitive ratio of $\Gamma(K)$ for the $K$-stage batch-arrival model (see \Cref{sec:batch,sec:ec-batch}) via a sequence of convex \emph{polynomial} regularizers. Can these regularizers be coupled with a rounding scheme that preserves the sharp factor $\Gamma(K)$? The fractional solution provides the right benchmark, but a matching integral guarantee would require controlling the interaction between stage-wise dependence and rounding loss.

\smallskip
\xhdr{Optimal ratio for stochastic rewards.} Is there an online algorithm with competitive ratio strictly larger than $1/2$ for adversarial online matching with stochastic rewards? Positive results above $1/2$ are known under additional structure, for example in regimes with homogeneous or vanishing probabilities \citep{mehta2012stochastic,mehta2014stochastic,huang2023online,huang2024online2,udwani2024stochastic, goyal2023stochastic}. The optimal ratio in the general adversarial model, however, remains open.

\smallskip
\xhdr{Beyond $\bm{1/2}$ for stochastic reusability.}
For models with stochastic reusability, the best-known competitive ratio in the general setting remains $1/2$. Notably, when resources are endowed with large initial capacities---a setting asymptotically equivalent to fractional allocation as capacities approach infinity---the optimal competitive ratio improves to $1-1/e$. This bound is achieved using the Balance algorithm for deterministic usage durations \citep{feng2021robustness}, while a distinct algorithmic approach is required for the general stochastic case \citep{goyal2025reusable}. It remains a prominent open question whether the general model admits a competitive ratio strictly greater than $1/2$. To date, the only established improvement beyond this $1/2$ threshold requires the restrictive assumption of deterministic and identical usage durations across all resources \citep{delong2024online}.

\smallskip
\xhdr{Sequential offerings, patience, and assortment.}
Can one beat the $1/2$ barrier in structured models that generalize stochastic rewards, such as matching with patience, ranking and sequential offerings, and online assortment optimization? Greedy-type policies achieve a $1/2$ guarantee across a broad family of such models, and the general hardness result of \citet{udwani2025optimality} shows that, unless NP=RP, no universal improvement is possible in the most general formulation, namely online submodular welfare maximization with stochastic outcomes. What remains unclear is what additional structure is sufficient to recover a larger ratio in canonical subclasses \citep{borodin2022prophet,brubach2025online,golrezaei2014real,aouad2023assortment}.

\smallskip
\xhdr{Learning-augmented competitive analysis.}
Suppose the algorithm receives predictions about future arrivals or prices. Can one adapt the regularizer online so that the algorithm achieves a better guarantee when the predictions are accurate, while degrading gracefully to the worst-case bound when they are not? In other words, can the convex-programming recipe be made robust to prediction error in the spirit of learning-augmented online algorithms? This question has been partially answered for special case of two-stage matching under batch arrival~\citep{jin2022online}, but it remains open more broadly.

\smallskip
\xhdr{Integrating convex-programming-based and LP-free templates.} %Settings arising in modern platforms may require a combination of both parts of this tutorial: rich adversarial structure on the one hand, and complex stochastic post-allocation effects on the other. 
Another compelling open question is identifying natural settings where a combination of both proof templates is fundamentally required, and finding out if such an integration can unlock novel performance guarantees for advanced algorithms operating beyond the Greedy paradigm.
%Many modern platform problems combine the ingredients of both parts of this tutorial: rich adversarial structure on the one hand, and stochastic post-allocation effects on the other. Examples include reusable resources with heterogeneous rewards, assortment problems with delayed stochastic conversion, and models in which cancellations, substitutions, or reuse interact with online prices. Can one develop a single proof template that interpolates between convex-programming-based dual fitting and LP-free certificates, while preserving the transparency and portability of both approaches?

% A central question in online allocation with stochastic outcomes is whether adaptivity---changing decisions in response to realized outcomes---improves worst-case competitive guarantees. Under a broad formulation of online submodular welfare maximization with stochastic outcomes, adaptivity does not yield a worst-case improvement: non-adaptive Greedy achieves the best possible competitive ratio among polynomial-time online algorithms (unless NP=RP) \citep{udwani2025optimality}. However, this negative result does not resolve the role of adaptivity in structured canonical subproblems. Understanding when, and by how much, one can exceed the $1/2$ barrier in such models remains a basic open direction.
	\section*{Acknowledgements}
	We are very grateful to the editorial team and anonymous reviewers for their insightful and constructive feedback.

%% file: tex/ec_intro.tex
\section{Electronic companion: additional models and deferred details}
\label{sec:ec-intro}

The main body develops the two core adversarial models in Part~I---online fractional matching and edge-weighted matching with free disposal---and keeps the technical discussion focused on the reusable design-and-analysis template introduced there. This electronic companion records the broader development behind that template. In particular, it collects the richer deterministic extensions that would otherwise interrupt the flow of the main exposition, together with the deferred proofs whose full details are useful for reference.

Throughout the electronic companion, the notation table in Table~\ref{tab:notation-main} remains in force. Any additional symbol is defined locally at first use. The guiding philosophy is also unchanged: identify the right state, regularize the post-decision state rather than the current reward, extract the dual update from the KKT system, and then choose the regularizer so that approximate dual feasibility collapses to a one-dimensional identity or recursion.

More specifically, the electronic companion contains the following pieces.
\begin{enumerate}[leftmargin=1.4em]
    \item \textbf{Batch arrival.} We study the $K$-stage arrival model, introduce the stage-dependent polynomial regularizers that yield the optimal factor $\GammaFunc{K}=1-(1-1/K)^K$, and record the recursive dual-fitting argument that replaces the scalar identity from the fully online setting (\Secref{sec:batch} and \Secref{sec:ec-batch}).
    \item \textbf{Configuration allocation / whole-page optimization.} We move from scalar or reward-indexed states to a price-level state vector and show how direct convex duality replaces the support-graph decomposition that was available in matching (\Secref{sec:configuration} and \Secref{sec:ec-config}).
    \item \textbf{Other extensions.} We explain how the same template specializes to AdWords, costly cancellations, and several nearby allocation models. The emphasis is on reusability of the main ideas rather than on developing a separate proof for every variant (\Secref{sec:extensions}).
\end{enumerate}

The companion is therefore meant to be read in two ways. A first pass can follow only the model statements and the high-level proof roadmaps. A second pass can then use the supplementary sections as a self-contained reference for the detailed primal--dual calculations.

%% file: tex/batch_arrival.tex
\section{Batch arrival: multi-stage fractional matching}
\label{sec:batch}

We next record the $K$-stage arrival model studied in \citet{feng2024batching}. Relative to the fully online setting of \Secref{sec:matching}, the main new issue is temporal heterogeneity: a fixed regularizer is no longer appropriate across the entire horizon. Early stages should hedge more aggressively because much of the future is still unknown, whereas late stages should behave more like the residual greedy problem. The resulting analysis is still primal--dual, but the scalar identity from the one-arrival model is replaced by a recursion across stages.

\subsection{How the universal recipe changes}

The five-step recipe from \Secref{sec:recipe} survives almost verbatim, with three modifications.
\begin{enumerate}
    \item The state remains the offline load vector, but it is updated stage by stage rather than arrival by arrival.
    \item The regularizer is stage dependent: the stage-$k$ program uses $F_k$ rather than a single function $F$.
    \item The KKT conditions are now written for an entire batch. As a result, the relevant structure is no longer the one-arrival water-filling picture, but a decomposition of the positive-support graph of the batch.
\end{enumerate}
The conceptual message is unchanged: the regularizer determines the online rule, and the KKT system determines the dual update.

\subsection{Stage-dependent polynomial regularizers}

The appropriate stage-wise regularizers are determined by a backward recursion.

\begin{proposition}[Optimal stage-dependent regularizers]
\label{prop:batch-polynomials}
Let $K\in\NN$. Define functions $f_1,\ldots,f_K$ on $[0,1]$ by
\[
    f_K(x)=\ind{x=1},
    \qquad
    f_k(x)=\max_{y\in[0,1-x]} (1-y)f_{k+1}(x+y)
    \qquad \forall k\in[K-1].
\]
Then, for every $k\in[K-1]$,
\[
    f_k(x)=\left(1-\frac{1-x}{K-k}\right)^{K-k}.
\]
Moreover:
\begin{enumerate}[label=(\roman*)]
    \item each $f_k$ is increasing on $[0,1]$, and therefore $F_k(x):=\int_0^x f_k(u)\,\dd u$ is differentiable and strictly convex;
    \item $f_k(x)\le \Expo^{x-1}$ for every $x\in[0,1]$;
    \item $f_1$ converges pointwise to $\Expo^{x-1}$ as $K\to\infty$;
    \item
    \[
        \GammaFunc{K}=1-\left(1-\frac{1}{K}\right)^K
        = \min_{y\in[0,1]}\bigl[1-(1-y)f_1(y)\bigr].
    \]
\end{enumerate}
\end{proposition}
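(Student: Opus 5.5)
The plan is to prove the closed form by backward induction on $k$, indexing by $m:=K-k\in\{1,\ldots,K-1\}$. Properties (i)--(iii) then follow directly from the explicit formula. Property (iv) is one more step of the same one-variable maximization, carried out at $x=0$.

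\emph{Base case ($k=K-1$, $m=1$).} Since $f_K(x+y)=\ind{x+y=1}$, the only feasible choice giving a nonzero value is $y=1-x$. Hence $f_{K-1}(x)=(1-(1-x))\cdot 1=x$, which agrees with $\bigl(1-\tfrac{1-x}{1}\bigr)^1$.

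\emph{Inductive step.} Suppose $f_{k+1}(x)=\bigl(1-\tfrac{1-x}{m}\bigr)^m$ with $m=K-k-1\ge 1$. Substitute $u:=1-x-y\in[0,1-x]$, so that $1-y=x+u$. Then
\[
f_k(x)=\max_{u\in[0,1-x]}\,(x+u)\Bigl(1-\tfrac{u}{m}\Bigr)^{m}.
\]
On the relevant range the objective is nonnegative, and its logarithm $\log(x+u)+m\log(1-u/m)$ is concave in $u$. A stationary point is therefore a global maximizer. The first-order condition $\tfrac{1}{x+u}=\tfrac{1}{1-u/m}$ gives $x+u=1-u/m$, that is, $u^\ast=\tfrac{m(1-x)}{m+1}\in[0,1-x]$. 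At $u^\ast$ the two factors coincide: $x+u^\ast=1-u^\ast/m=1-\tfrac{1-x}{m+1}$. So
\[
f_k(x)=\Bigl(1-\tfrac{1-x}{m+1}\Bigr)^{m+1},
\]
which is the claim for $k$, since $m+1=K-k$. The only care needed is the degenerate endpoint $x=0$, where $\log(x+u)$ is unbounded below at $u=0$. Concavity still applies on $(0,1-x]$, and the value at $u=0$ is $0$, so nothing changes.

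\emph{Properties.} For (i), on $[0,1]$ the base $1-\tfrac{1-x}{K-k}$ lies in $[0,1]$ and is strictly increasing in $x$. Hence $f_k$ is continuous and strictly increasing for $k<K$, so $F_k$ is $C^1$ and strictly convex. For $k=K$ the function $f_K$ is only nondecreasing and $F_K\equiv 0$; I would flag this degenerate terminal stage explicitly rather than claim strict convexity there. For (ii), apply $1-a/n\le \Expo^{-a/n}$ with $a=1-x\in[0,1]$ and raise to the $n$-th power. For (iii), use the standard limit $(1-a/n)^n\to \Expo^{-a}$ with $n=K-1\to\infty$.

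For (iv), observe that $\max_{y\in[0,1]}(1-y)f_1(y)$ is exactly the recursion evaluated one stage earlier at $x=0$. Running the inductive computation once more with $m=K-1$ gives $\bigl(1-\tfrac1K\bigr)^K$. Therefore
\[
\min_{y}\bigl[1-(1-y)f_1(y)\bigr]=1-\bigl(1-\tfrac1K\bigr)^K=\GammaFunc{K}.
\]
The case $K=1$ must be checked separately: there $f_1=f_K$ is the indicator, the maximum is $0$, and $\GammaFunc{1}=1$ holds trivially.

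\emph{Main obstacle.} The only real work is the inductive maximization, namely justifying that the interior stationary point is the global maximum and lies in the feasible interval. Log-concavity of the objective in $u$ handles this cleanly. Everything else is bookkeeping, apart from the degenerate cases $k=K$ and $K=1$ noted above.
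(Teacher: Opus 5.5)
Your proposal is correct, and for properties (i)--(iv) it follows the paper's proof closely. The paper also gets (i) from the explicit formula, via $f_k'(y)=\bigl(1-\tfrac{1-y}{n}\bigr)^{n-1}\ge 0$. It gets (ii) from $1-z\le \Expo^{-z}$ and (iii) from the standard limit. For (iv) it differentiates $g(u)=u\bigl(1-\tfrac{u}{K-1}\bigr)^{K-1}$ and locates the maximizer $u=(K-1)/K$; this is exactly your inductive maximization run at $x=0$ with $m=K-1$.

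The one genuine difference is that the paper never derives the closed form from the recursion. It simply writes $f_k(y)=\bigl(1-\tfrac{1-y}{n}\bigr)^n$ and proceeds. Your backward induction (base case $f_{K-1}(x)=x$, the substitution $u=1-x-y$, and the stationary point $u^\ast=\tfrac{m(1-x)}{m+1}$ where the two factors coincide) therefore supplies the step that actually connects the recursive definition to the stated formula. Likewise, your explicit treatment of $k=K$ (where $F_K\equiv 0$ is not strictly convex) and of $K=1$ (where the paper's $K-1$ denominators are undefined) handles edge cases the paper silently excludes.

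If you want to avoid the endpoint discussion for $\log(x+u)$ altogether, weighted AM--GM gives the maximization in one line: $(x+u)\bigl(1-\tfrac{u}{m}\bigr)^m \le \Bigl(\tfrac{(x+u)+m(1-u/m)}{m+1}\Bigr)^{m+1}=\Bigl(1-\tfrac{1-x}{m+1}\Bigr)^{m+1}$. Equality holds exactly when $x+u=1-u/m$, which is attained at your $u^\ast$.
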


The proof is given in \Secref{sec:ec-batch}. The degree of $f_k$ is $K-k$, so the regularization intensity decreases with the stage index. In particular, when $K$ is large and the batches are correspondingly small, the polynomial family approaches the exponential rule from the fully online model.

\subsection{The stage-wise algorithm}

Let $y_j^{(k)}$ denote the cumulative load on offline node $j$ after stage $k$, with $y_j^{(0)}=0$. After observing the batch $U_k$ and its edge set $E_k$, the algorithm solves
\begin{equation}
\begin{aligned}
    \max_{x_{tj}^{(k)}\ge 0}\quad
    & \sum_{(t,j)\in E_k} w_jx_{tj}^{(k)}
      - \sum_{j\in V} w_jF_k\!\left(y_j^{(k-1)}+
      \sum_{t:(t,j)\in E_k}x_{tj}^{(k)}\right) \\
    \text{s.t.}\quad
    & \sum_{j:(t,j)\in E_k} x_{tj}^{(k)}\le 1 \qquad \forall t\in U_k, \\
    & \sum_{t:(t,j)\in E_k} x_{tj}^{(k)}\le 1-y_j^{(k-1)} \qquad \forall j\in V.
\end{aligned}
\label{eq:batch-stage-program}
\end{equation}
and then updates the state by
\[
    y_j^{(k)}=y_j^{(k-1)}+\sum_{t:(t,j)\in E_k}x_{tj}^{(k)}.
\]
Here $x_{tj}^{(k)}$ is the fraction of batch vertex $t\in U_k$ assigned to offline node $j$ during stage $k$.

The unweighted case is a useful special case. There, the optimizer of \Eqref{eq:batch-stage-program} is simply batched water-filling: within a batch, mass is pushed toward the least loaded feasible offline vertices until one either exhausts the batch supply or saturates a subset of offline vertices. The polynomial family is therefore not needed to describe the primal rule in the unweighted model; it is needed to analyze that rule and to obtain the sharp factor $\GammaFunc{K}$.

\subsection{What the KKT conditions reveal}

Fix a stage $k\le K-1$, and let $x^{(k)}=\{x_{tj}^{(k)}\}$ denote the optimizer of \Eqref{eq:batch-stage-program}. Consider the positive-support graph
\[
    G_k' := \Bigl(U_k\cup V,\ \{(t,j)\in E_k: x_{tj}^{(k)}>0\}\Bigr).
\]
Let
\[
    \mathcal V_{k,0}:=\{j\in V: y_j^{(k)}=1\},
\]
and let $\mathcal U_{k,0}$ be the set of their neighbors in $G_k'$. Remove those vertices, and denote the connected components of the remaining support graph by $(\mathcal U_{k,\ell},\mathcal V_{k,\ell})$, $\ell=1,\ldots,L_k$.

\begin{proposition}[KKT decomposition of a batch]
\label{prop:batch-structure}
For every stage $k\le K-1$, there exist numbers
$0=c_{k,0}<c_{k,1}<\cdots<c_{k,L_k}$ such that:
\begin{enumerate}[label=(\roman*)]
    \item \textbf{Uniformity.} For every component $\ell\in\{0,1,\ldots,L_k\}$ and every $j,j'\in\mathcal V_{k,\ell}$,
    \[
        w_j\bigl(1-f_k(y_j^{(k)})\bigr)
        =w_{j'}\bigl(1-f_k(y_{j'}^{(k)})\bigr)
        = c_{k,\ell}.
    \]
    \item \textbf{Saturation.} Every online node in $\mathcal U_{k,\ell}$, $\ell\ge 1$, is fully matched in stage $k$.
    \item \textbf{Monotonicity.} If $c_{k,\ell}<c_{k,\ell'}$, then no edge of $E_k$ joins $\mathcal U_{k,\ell}$ to $\mathcal V_{k,\ell'}$.
\end{enumerate}
\end{proposition}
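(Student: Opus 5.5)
The plan is to read all three properties directly off the KKT system of the stage program \eqref{eq:batch-stage-program}, mirroring the water-filling argument from \Secref{subsec:matching-warmup}.

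\textbf{KKT system.} Introduce multipliers $\alpha_t\ge 0$ for the online constraints, $\lambda_j\ge 0$ for the residual capacity constraints, and $\theta_{tj}\ge 0$ for nonnegativity. The stationarity condition for each $(t,j)\in E_k$ is
\[
w_j\bigl(1-f_k(y_j^{(k)})\bigr)-\alpha_t-\lambda_j+\theta_{tj}=0 .
\]
Complementary slackness is the batch analogue of \eqref{eq:warmup-kkt-3}--\eqref{eq:warmup-kkt-4}. I will use two facts from \Propref{prop:batch-polynomials}: $f_k$ is strictly increasing, and $f_k(1)=1$ (evaluate the closed form at $x=1$). I assume $w_j>0$; offline nodes with zero weight can be discarded without affecting the claim.

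\textbf{Uniformity.} For a saturated node $j\in\mathcal V_{k,0}$ we have $y_j^{(k)}=1$. Hence $w_j(1-f_k(1))=0=c_{k,0}$, so the case $\ell=0$ is immediate. Now take an unsaturated node $j$, so $\lambda_j=0$, and an edge with $x_{tj}^{(k)}>0$, so $\theta_{tj}=0$. Stationarity then gives $\alpha_t=w_j(1-f_k(y_j^{(k)}))$. Consequently, along every support edge of the residual graph the value $w_j(1-f_k(y_j^{(k)}))$ equals the multiplier $\alpha_t$ of its endpoint. Propagating this equality along paths inside a connected component $(\mathcal U_{k,\ell},\mathcal V_{k,\ell})$ shows that all offline nodes of the component, and all $\alpha_t$ with $t\in\mathcal U_{k,\ell}$, share one common value $c_{k,\ell}$.

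This value is strictly positive. Indeed, $y_j^{(k)}<1$, $f_k$ is strictly increasing, and $f_k(1)=1$ together give $f_k(y_j^{(k)})<1$.

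\textbf{Saturation and monotonicity.} Since $\alpha_t=c_{k,\ell}>0$ for every $t\in\mathcal U_{k,\ell}$ with $\ell\ge1$, complementary slackness forces $\sum_j x_{tj}^{(k)}=1$, which is saturation.

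For monotonicity, take any edge $(t,j)\in E_k$ with $t\in\mathcal U_{k,\ell}$ and $j\in\mathcal V_{k,\ell'}$.
\begin{itemize}
\item If $\ell'\ge1$, then $\lambda_j=0$, and stationarity with $\theta_{tj}\ge0$ yields $c_{k,\ell}=\alpha_t\ge w_j(1-f_k(y_j^{(k)}))=c_{k,\ell'}$.
\item If $\ell'=0$, then $c_{k,\ell'}=0\le c_{k,\ell}$ trivially.
\end{itemize}
In either case an edge can only go from a component with value $c$ to one with value at most $c$. This is exactly the contrapositive of (iii).

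\textbf{Main obstacle: bookkeeping in degenerate cases.} First, two different components may share the same value of $c$. I would handle this by merging all components with a common value into a single group $\ell$. The properties above are preserved under merging, and after sorting the distinct values we obtain the strict chain $0=c_{k,0}<c_{k,1}<\cdots$.

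Second, an online vertex may have no positive-support edge at all. Such a vertex is not attached to any offline node of a residual component, so it must be assigned by convention. I place it in $\mathcal U_{k,0}$; the proposition makes no claim about $\mathcal U_{k,0}$, so nothing needs to be checked for it.

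Once these conventions are fixed, the argument is a direct transcription of the KKT system.
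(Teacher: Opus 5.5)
Your approach is the paper's: the same stationarity condition, propagation of $w_j(1-f_k(y_j^{(k)}))=\alpha_t$ along positive-support paths, positivity of $c_{k,\ell}$ forcing saturation through complementary slackness, and the inequality $\alpha_t\ge w_j(1-f_k(y_j^{(k)}))$ at unsaturated neighbors for monotonicity. Your point about merging components with equal values is a genuine improvement. With one index per connected component, the strict chain $c_{k,0}<c_{k,1}<\cdots$ in the statement can fail, and the paper does not address this.

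There is one case you skip, and you skip it on a false premise. You say the proposition makes no claim about $\mathcal U_{k,0}$. But (iii) with $\ell=0$ asserts that no edge of $E_k$ joins $\mathcal U_{k,0}$ to any $\mathcal V_{k,\ell'}$ with $\ell'\ge 1$, because $c_{k,0}=0<c_{k,\ell'}$. Your monotonicity argument starts from $c_{k,\ell}=\alpha_t$, which you proved only for $t\in\mathcal U_{k,\ell}$ with $\ell\ge 1$; for $t\in\mathcal U_{k,0}$ you never determine $\alpha_t$. This case matters: the proof of \Thmref{thm:batch-main} invokes (iii) to get $c_{k,\ell'}\le\widehat\alpha_t$ on every edge, including edges leaving $\mathcal U_{k,0}$, where $\widehat\alpha_t=0$.

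The repair is short.
\begin{itemize}
\item If $t\in\mathcal U_{k,0}$ has a support edge to a saturated $j$, stationarity on that edge with $\theta_{tj}=0$ and $f_k(1)=1$ reads $-\alpha_t-\lambda_j=0$, so $\alpha_t=0$. The paper gets this uniformly in $\ell$ by keeping $\alpha_t=c_{k,\ell}-\lambda_j\le c_{k,\ell}$ as an inequality instead of setting $\lambda_j=0$.
\item If $t$ is one of your isolated vertices, then $\sum_j x_{tj}^{(k)}=0<1$, so $\alpha_t=0$ by complementary slackness.
\end{itemize}
In either case, stationarity on an edge $(t,j')$ with $j'$ unsaturated gives $w_{j'}(1-f_k(y_{j'}^{(k)}))=\alpha_t-\theta_{tj'}\le 0$. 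This contradicts the positivity you established, so no such edge exists. With this added, your proof is complete.
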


The proof is given in \Secref{sec:ec-batch}. This is the stage-wise analogue of the KKT picture from \Secref{sec:matching}. In the one-arrival model the KKT conditions describe how a single request splits its mass. In the batched model they describe how the support graph decomposes into components with a common marginal ``matchability level'' $c_{k,\ell}$.

\subsection{Dual construction and the scalar recursion}

The offline dual benchmark is still \Eqref{eq:offline-dual-basic}. Guided by \Propref{prop:batch-structure}, we define the fitted dual solution stage by stage. For stages $k\le K-1$, set
\[
    \widehat\alpha_t := c_{k,\ell}
    \qquad \text{for every } t\in\mathcal U_{k,\ell},
\]
and increment $\widehat\beta_j$ by
\begin{equation}
    \Delta\widehat\beta_j^{(k)}
    := w_j\bigl(y_j^{(k)}-y_j^{(k-1)}\bigr)f_k\bigl(y_j^{(k)}\bigr)
    \qquad \forall j\in V.
    \label{eq:batch-beta-increment-main}
\end{equation}
At the last stage, $F_K\equiv 0$, so the stage problem reduces to the residual maximum-weight matching LP. We therefore use an optimal dual solution of that residual LP for the final dual increment.

Approximate dual feasibility is governed by the scalar recursion
\begin{equation}
    1-(1-x)f_k(u_k+x)+\sum_{s=1}^{k-1}(u_{s+1}-u_s)f_s(u_{s+1})\ge \GammaFunc{K}
    \label{eq:batch-recursion-main}
\end{equation}
for every nondecreasing sequence $0=u_1\le \cdots\le u_k\le 1-x$. The proof is elementary but recursive, so we record it in \Secref{sec:ec-batch}. Equation \Eqref{eq:batch-recursion-main} is the stage-wise counterpart of the identity $1-f(y)+F(y)=\Gamma$ from the fully online model.

\begin{theorem}[Optimal competitive ratio under $K$-stage arrival]
\label{thm:batch-main}
For $K$-stage fractional vertex-weighted matching, the stage-wise algorithm defined by \Eqref{eq:batch-stage-program} is $\GammaFunc{K}$-competitive, where
\[
    \GammaFunc{K}=1-\left(1-\frac{1}{K}\right)^K.
\]
\end{theorem}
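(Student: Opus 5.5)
The plan is to run the two-stage dual-fitting template of \Lemref{lem:generic-dual-fitting}, using the stage-wise fitted duals defined just above. The analysis has three parts: exact primal--dual accounting, a telescoping lower bound on $\widehat\beta_j$, and a per-edge feasibility check that reduces to the recursion \eqref{eq:batch-recursion-main}.

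\emph{Exact accounting.} For a stage $k\le K-1$, I would show that the dual increment equals the primal increment,
\[
\sum_{t\in U_k}\widehat\alpha_t+\sum_j\Delta\widehat\beta_j^{(k)}=\sum_{(t,j)\in E_k} w_jx^{(k)}_{tj},
\]
and argue component by component using \Propref{prop:batch-structure}.
\begin{itemize}
\item In a component $\ell\ge1$, every $t\in\mathcal U_{k,\ell}$ is saturated, so $\sum_{t\in\mathcal U_{k,\ell}} c_{k,\ell}$ equals $c_{k,\ell}$ times the mass that flows into $\mathcal V_{k,\ell}$.
\item On that same mass, $w_j - c_{k,\ell}=w_jf_k(y^{(k)}_j)$ by uniformity. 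Adding this to the first bullet gives exactly $w_j$ per unit of flow.
\item In component $0$, $c_{k,0}=0$, so the accounting is immediate.
\end{itemize}
For the last stage I use LP strong duality of the residual matching LP. The residual capacities are $1-y^{(K-1)}_j$, so the dual objective is $\sum_t\alpha_t+\sum_j(1-y_j^{(K-1)})\beta_j$. I would define the final $\beta$ increment as $(1-y_j^{(K-1)})\beta_j$; then this stage also matches exactly.

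\emph{Lower bound on $\widehat\beta_j$ and feasibility.} Before stage $K$, the increments in \eqref{eq:batch-beta-increment-main} give $\widehat\beta_j\ge w_j\sum_{s}(y^{(s)}_j-y^{(s-1)}_j)f_s(y^{(s)}_j)$, which is the sum in \eqref{eq:batch-recursion-main} with $u_s=y_j^{(s-1)}$. Now fix an edge $(t,j)$ with $t\in U_k$.
\begin{itemize}
\item If $k=K$, dual feasibility of the residual LP gives $\alpha_t+(1-y^{(K-1)}_j)\beta_j\ge (1-y^{(K-1)}_j)w_j$ whenever $\beta_j\le w_j$. Hence $\widehat\alpha_t+\Delta\widehat\beta_j\ge w_j(1-y_j^{(K-1)})$. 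I would handle $\beta_j>w_j$ by clipping $\beta_j$ at $w_j$, which preserves feasibility since then $\alpha_t\ge0$ already suffices. With $f_K=\ind{x=1}$, this bound is the $k=K$, $x=0$ instance of \eqref{eq:batch-recursion-main}.
\item If $k<K$, the monotonicity and uniformity parts of \Propref{prop:batch-structure} give $\widehat\alpha_t\ge w_j(1-f_k(y_j^{(k)}))$. The reason is that $t$ lies in a component whose level is at least that of $j$. If $j$ is saturated the bound still holds because $c_{k,0}=0=w_j(1-f_k(1))$.
\item Later stages only increase $\widehat\beta_j$. To get the $(1-x)$ factor I would split the later gain: the increment $y^{(k)}_j-y^{(k-1)}_j$ enters the sum as $u_{k+1}-u_k$, and future stages contribute at least what the backward recursion for $f_k$ guarantees. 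This is precisely why $f_k$ is defined as $\max_y(1-y)f_{k+1}$.
\end{itemize}

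\emph{Closing the recursion.} Prove \eqref{eq:batch-recursion-main} by backward induction on $k$, using the closed form from \Propref{prop:batch-polynomials}. At $k=1$ it reduces to $1-(1-x)f_1(x)\ge\Gamma(K)$, which is item (iv). For the inductive step I would use $f_k(u)\ge(1-y)f_{k+1}(u+y)$ to pass from stage $k$ to stage $k+1$. \textbf{Main obstacle:} aligning the edge-feasibility inequality across stages, i.e.\ showing that the $\widehat\beta_j$ accumulated at later stages plus the stage-$k$ bound on $\widehat\alpha_t$ together match the left side of \eqref{eq:batch-recursion-main}. I would attempt this via the identity $f_k(x)=\max_y(1-y)f_{k+1}(x+y)$ and convexity of $F_s$.
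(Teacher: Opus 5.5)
Your exact-accounting step and your last-stage dual increment $(1-y_j^{(K-1)})\bar\beta_j$ match the paper. The per-edge feasibility argument for stages $k\le K-1$, however, has a genuine gap, and the mechanism you propose to close it is false. Later stages need not add anything to $\widehat\beta_j$. If $j$ receives no further load and the residual dual has $\bar\beta_j=0$, every increment after stage $k$ vanishes, so there is no future gain that the backward recursion could guarantee.

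The missing observation is that the $(1-x)$ factor comes from the \emph{current} stage's increment. Set $x:=y_j^{(k)}-y_j^{(k-1)}$ (post-stage minus pre-stage load, main-text indexing). Your bound on $\widehat\alpha_t$ combines with $\Delta\widehat\beta_j^{(k)}$ as
\[
\widehat\alpha_t+\Delta\widehat\beta_j^{(k)}\ \ge\ w_j\bigl(1-f_k(y_j^{(k)})\bigr)+w_j\,x\,f_k(y_j^{(k)})
= w_j\bigl[1-(1-x)f_k(y_j^{(k-1)}+x)\bigr].
\]
Adding $\sum_{s<k}\Delta\widehat\beta_j^{(s)}$ gives exactly $w_j$ times the left side of \eqref{eq:batch-recursion-main}, with $u_s=y_j^{(s-1)}$ for $s=1,\ldots,k$ (so $u_k\le 1-x$). \Lemref{lem:batch-recursion} then applies at stage $k$ itself, and later increments only need to be nonnegative. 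Your error is an index misalignment: you placed the stage-$k$ increment into the sum as the term $u_{k+1}-u_k$, when it must play the role of $x$.

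The last-stage reduction is also misidentified. With $f_K=\ind{x=1}$, the $k=K$, $x=0$ instance reads $1-\ind{u_K=1}+\sum_{s<K}(u_{s+1}-u_s)f_s(u_{s+1})\ge\GammaFunc{K}$. This is weaker than what you need, namely the same inequality with $1-u_K$ in place of $1-\ind{u_K=1}$, because $1-u_K\le 1-\ind{u_K=1}$. For example, with $K=2$ you need $1-a+a^2\ge 3/4$, and the indicator version does not give it.

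To fix this, use $f_{K-1}(u)=u$, or, as the paper does, Bernoulli's $f_{k'}(y)\ge y$ at the last stage $k'$ where $j$'s load increased. This rewrites $1-u_K$ as $1-f_{K-1}(u_K)$ and reduces the bound to the $k=K-1$ instance of the lemma with $x=u_K-u_{K-1}$. Separately, your clipping of $\beta_j$ is unnecessary: if $\beta_j>w_j$, then $(1-y)\beta_j\ge(1-y)w_j$ directly.
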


\begin{proof}{Proof.}
The theorem is proved in full in \Secref{sec:ec-batch}. The argument follows exactly the same structure as in the main body. First, the KKT decomposition in \Propref{prop:batch-structure} yields an online dual update for which the stage-wise dual increment matches the stage-wise primal reward exactly. Second, the scalar recursion \Eqref{eq:batch-recursion-main} implies approximate dual feasibility on every edge. Weak duality then gives the competitive ratio $\GammaFunc{K}$.\qed
\end{proof}

\begin{remark}
For $K=2$, the theorem gives the sharp ratio $\GammaFunc{2}=3/4$, already strictly larger than the fully online benchmark $1-1/\Expo$. As $K\to\infty$, $\GammaFunc{K}\downarrow 1-1/\Expo$, so the stage-wise model converges back to the online limit. The upper bound proved in \citet{feng2024batching} shows that this ratio is tight.
\end{remark}

%% file: tex/configuration.tex
\section{Configuration allocation}
\label{sec:configuration}

Configuration allocation is the most general deterministic model treated in Part~I. It contains vertex-weighted matching, AdWords, edge-weighted matching with free disposal, and the whole-page optimization model of \citet{devanur2016wholepage} as special cases. The same convex-programming viewpoint still applies, but the state is now multi-dimensional and the clean support-graph decomposition from matching is no longer available. The natural replacement is direct convex duality.

\subsection{Why this model is harder}

Three new issues appear at once.
\begin{enumerate}
    \item \textbf{Multiple price levels.} In matching, all mass assigned to a fixed offline node has the same value. In configuration allocation, advertiser $j$ may value different user--configuration pairs at very different prices.
    \item \textbf{No useful graph decomposition.} In batch matching, the KKT system induced a graph decomposition of the positive-support edges. Here an advertiser may be tight at some price thresholds and slack at others, so a graph-only description no longer captures the relevant structure.
    \item \textbf{Allocation and preemption are coupled.} At each stage, the algorithm must decide not only what new mass to keep, but also which previously retained mass to discard.
\end{enumerate}
These are precisely the reasons the analysis moves from combinatorial structure to direct convex duality.

\subsection{State, stage program, and interpretation of the variables}

Assume there is a finite price universe
\[
    0=\widehat w_0<\widehat w_1<\cdots<\widehat w_T.
\]
For each advertiser $j$ and price index $\tau$, let $\eta_{j,\tau}^{(k-1)}$ denote the amount of mass retained by advertiser $j$ at price level $\widehat w_\tau$ before stage $k$.

Once batch $U_k$ arrives, the stage-$k$ convex program chooses three families of variables.
\begin{center}
\small
\begin{tabularx}{0.97\linewidth}{>{\raggedright\arraybackslash}p{0.20\linewidth} X}
\toprule
Variable & Meaning \\
\midrule
$z_{t,c}$ & fraction of user $t\in U_k$ assigned to configuration $c$ \\
$x_{t,c,j}$ & amount of new mass from $(t,c)$ that advertiser $j$ still pays for \emph{after} stage $k$ \\
$y_{j,\tau}$ & amount of previously retained mass of advertiser $j$ at price level $\widehat w_\tau$ that survives stage $k$ \\
\bottomrule
\end{tabularx}
\end{center}
The input data are the prices $w_{t,c,j}$, the consumptions $\xi_{t,c,j}$, and the pre-stage state $\eta_{j,\tau}^{(k-1)}$.

For any candidate solution $(x,y,z)$, define the cumulative post-stage mass of advertiser $j$ above threshold $\widehat w_\tau$ by
\begin{equation}
    H_{j,\tau}(x,y)
    :=
    \sum_{\substack{t\in U_k,\,c\in\mathcal C:\
                     w_{t,c,j}\ge \widehat w_\tau}} x_{t,c,j}
    + \sum_{\tau'\ge \tau} y_{j,\tau'}.
    \label{eq:config-stage-cumulative}
\end{equation}
This is the price-level analogue of the cumulative state $Y_j(r)$ from \Secref{sec:free-disposal}.

The stage-wise regularized convex program is
\begin{subequations}
\begin{align}
    \max_{x,\,y,\,z\ge 0}\quad
    & \sum_{t\in U_k}\sum_{c\in\mathcal C}\sum_{j\in V} w_{t,c,j}x_{t,c,j} \nonumber \\
    &\qquad - \sum_{j\in V}\sum_{\tau\in[T]}
      \widehat w_\tau\bigl(\eta_{j,\tau}^{(k-1)}-y_{j,\tau}\bigr) \nonumber \\
    &\qquad - \sum_{j\in V}\sum_{\tau\in[T]}
      (\widehat w_\tau-\widehat w_{\tau-1})
      F_k\bigl(H_{j,\tau}(x,y)\bigr)
    \label{eq:config-stage-program}
\end{align}
subject to
\begin{align}
    \sum_{t\in U_k}\sum_{c\in\mathcal C} x_{t,c,j} + \sum_{\tau\in[T]} y_{j,\tau}
    &\le 1 && \forall j\in V,
    \label{eq:config-stage-capacity}\\
    \sum_{c\in\mathcal C} z_{t,c}
    &\le 1 && \forall t\in U_k,
    \label{eq:config-stage-choice}\\
    x_{t,c,j}
    &\le \xi_{t,c,j}z_{t,c} && \forall t\in U_k,\ \forall c\in\mathcal C,\ \forall j\in V,
    \label{eq:config-stage-link}\\
    y_{j,\tau}
    &\le \eta_{j,\tau}^{(k-1)} && \forall j\in V,\ \forall \tau\in[T].
    \label{eq:config-stage-carry}
\end{align}
\end{subequations}

The three lines of the objective have a direct interpretation: the first is the retained revenue from the new batch, the second subtracts the value of previously retained mass that is discarded in stage $k$, and the third regularizes the post-stage cumulative mass of each advertiser at every threshold. The state update is
\[
    \eta_{j,\tau}^{(k)}
    :=
    \sum_{\substack{t\in U_k,\,c\in\mathcal C:\
                     w_{t,c,j}=\widehat w_\tau}} x_{t,c,j}
    + y_{j,\tau},
\]
so $\eta^{(k)}$ is exactly the retained price distribution after stage $k$.

\subsection{Offline benchmark}

The standard offline LP benchmark is
\begin{equation}
\begin{aligned}
    \max \quad & \sum_{t\in U}\sum_{c\in\mathcal C}\sum_{j\in V} w_{t,c,j}x_{t,c,j} \\
    \text{s.t.}\quad
    & \sum_{t\in U}\sum_{c\in\mathcal C} x_{t,c,j} \le 1 && \forall j\in V, \\
    & \sum_{c\in\mathcal C} z_{t,c}\le 1 && \forall t\in U, \\
    & x_{t,c,j}\le \xi_{t,c,j}z_{t,c} && \forall t\in U,\ \forall c\in\mathcal C,\ \forall j\in V, \\
    & x_{t,c,j},z_{t,c}\ge 0.
\end{aligned}
\label{eq:offline-primal-config}
\end{equation}
Its dual is
\begin{equation}
\begin{aligned}
    \min \quad & \sum_{t\in U}\alpha_t + \sum_{j\in V}\beta_j \\
    \text{s.t.}\quad
    & \gamma_{t,c,j}+\beta_j \ge w_{t,c,j} && \forall t,c,j, \\
    & \alpha_t \ge \sum_{j\in V}\xi_{t,c,j}\gamma_{t,c,j} && \forall t,c, \\
    & \alpha_t,\beta_j,\gamma_{t,c,j}\ge 0.
\end{aligned}
\label{eq:offline-dual-config}
\end{equation}

\subsection{How the universal recipe specializes}

In this model, the five steps from \Secref{sec:recipe} become:
\begin{enumerate}
    \item \textbf{State:} the retained price-level vector $\eta^{(k-1)}$.
    \item \textbf{Regularized stage problem:} solve \Eqref{eq:config-stage-program}--\Eqref{eq:config-stage-carry}.
    \item \textbf{KKT signal:} read off the multipliers associated with the configuration, linking, and carry constraints.
    \item \textbf{Dual construction:} use the Lagrangian dual of the stage program directly rather than a graph decomposition.
    \item \textbf{Scalar inequality:} apply the same one-dimensional recursion from batch arrival, now threshold by threshold.
\end{enumerate}
The next two subsections make these points precise (with proofs and details in \Cref{sec:ec-config}).

\subsection{Structural lemmas from convex duality}

The first lemma records the KKT identities that later drive the offline-dual construction.

\begin{lemma}[Stage-wise KKT relations]
\label{lem:config-kkt-main}
Fix a stage $k\le K-1$, and let $(x^*,y^*,z^*)$ be the optimizer of \Eqref{eq:config-stage-program}--\Eqref{eq:config-stage-carry}. There exist nonnegative multipliers $\lambda_t^{(k)}$, $\mu_{t,c,j}^{(k)}$, $\psi_{t,c,j}^{(k)}$, and $\phi_{t,c}^{(k)}$ such that:
\begin{enumerate}[label=(\roman*)]
    \item complementary slackness holds for the configuration-choice and linking constraints;
    \item for every user--configuration pair $(t,c)$,
    \[
        \sum_{j\in V}\xi_{t,c,j}\mu_{t,c,j}^{(k)}-\lambda_t^{(k)}+\phi_{t,c}^{(k)}=0;
    \]
    \item for every triple $(t,c,j)$,
    \[
        w_{t,c,j}
        - \sum_{\tau:\,\widehat w_\tau\le w_{t,c,j}}
          (\widehat w_\tau-\widehat w_{\tau-1})f_k\bigl(H_{j,\tau}(x^*,y^*)\bigr)
        - \mu_{t,c,j}^{(k)} + \psi_{t,c,j}^{(k)} = 0.
    \]
\end{enumerate}
\end{lemma}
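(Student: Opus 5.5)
The plan is to apply the KKT theorem to the stage program \Eqref{eq:config-stage-program}--\Eqref{eq:config-stage-carry} and then read off the three claims from its Lagrangian. First we check that KKT applies. The objective is concave, since it is linear minus $F_k$ composed with the linear maps $H_{j,\tau}$, and $F_k$ is convex by \Propref{prop:batch-polynomials}(i). All constraints are linear, and the feasible set is nonempty ($x=y=z=0$) and compact. So an optimizer exists, and by the linearity constraint qualification the KKT conditions hold with nonnegative multipliers.

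Next we fix the multipliers. Take $\lambda_t^{(k)}$ for the choice constraint \Eqref{eq:config-stage-choice} and $\mu_{t,c,j}^{(k)}$ for the linking constraint \Eqref{eq:config-stage-link}. Take $\phi_{t,c}^{(k)}$ for $z_{t,c}\ge 0$ and $\psi_{t,c,j}^{(k)}$ for $x_{t,c,j}\ge 0$. The capacity constraint \Eqref{eq:config-stage-capacity} and the carry constraint \Eqref{eq:config-stage-carry} also get multipliers, say $\rho_j$ and $\kappa_{j,\tau}$. These two only enter the $y$-stationarity and will be used later in \Cref{sec:ec-config}.

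Now we read off the three statements. Item (i) is just complementary slackness for $\lambda$ and $\mu$, together with that for $\phi$ and $\psi$. For item (ii), the variable $z_{t,c}$ appears in the Lagrangian only through $-\lambda_t z_{t,c}$, through $+\sum_j\mu_{t,c,j}\xi_{t,c,j}z_{t,c}$ (from $\mu(\xi z-x)$), and through $+\phi_{t,c}z_{t,c}$. Setting the derivative to zero gives (ii) directly.

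Item (iii) is the $x_{t,c,j}$-stationarity condition. Differentiating the regularizer term requires $\partial H_{j,\tau}/\partial x_{t,c,j}=\ind{w_{t,c,j}\ge\widehat w_\tau}$, which follows from \Eqref{eq:config-stage-cumulative}. This yields $w_{t,c,j}-\sum_{\tau:\widehat w_\tau\le w_{t,c,j}}(\widehat w_\tau-\widehat w_{\tau-1})f_k(H_{j,\tau})-\mu_{t,c,j}+\psi_{t,c,j}$, plus an extra $-\rho_j$ from the capacity constraint.

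The main obstacle is this capacity multiplier $\rho_j$, which the stated identity (iii) omits. To handle it, I would first try to absorb $\rho_j$ into the regularizer. The route is to argue that $F_k$ may be extended beyond $1$ by a steep convex continuation, so that capacity is never binding, or else to note that $f_k(1)=1$. In the latter case, whenever capacity binds, $H_{j,1}=1$ makes the marginal $(\widehat w_1-\widehat w_0)f_k(1)$ price out the slack. If neither works, I would state the lemma with $\psi$ redefined to include $-\rho_j$, provided sign conventions permit. Alternatively, I would reparametrize $\mu$ via $\mu'_{t,c,j}=\mu_{t,c,j}+\rho_j$; this preserves (iii) exactly but forces (ii) to carry a $\sum_j\xi\rho_j$ term. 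This bookkeeping is where care is needed.
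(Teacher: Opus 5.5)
Your setup and your derivations of (i) and (ii) are fine, and you correctly noticed that $x_{t,c,j}$-stationarity carries an extra $-\rho_j$ from the capacity constraint \Eqref{eq:config-stage-capacity}. Removing that term is not bookkeeping, though. It is the substance of the lemma, and your proposal leaves it unproved. Your fallbacks do not work:
\begin{itemize}
\item Replacing $\psi_{t,c,j}$ by $\psi_{t,c,j}-\rho_j$ gives a negative multiplier exactly on edges with $x^*_{t,c,j}>0$, where complementary slackness forces $\psi_{t,c,j}=0$. So nonnegativity fails.
\item Replacing $\mu_{t,c,j}$ by $\mu_{t,c,j}+\rho_j$ breaks (ii). It also breaks complementary slackness for the linking constraint whenever that constraint is slack.
\item Extending $F_k$ beyond $1$ changes the program. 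Showing that the new optimizer coincides with $(x^*,y^*,z^*)$ is equivalent to what you need to prove.
\end{itemize}
Your remark that $f_k(1)=1$ points in the right direction. However, $H_{j,1}=1$ alone does not close the argument.

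The missing step is a proof that $\rho_j=0$ for every $j$ and every $k\le K-1$, which is how the paper proceeds. Suppose $\rho_j>0$, so $j$ is exactly full after the stage. Let $\tau^\star$ be the smallest price index carrying positive post-stage mass of $j$: either $y^*_{j,\tau^\star}>0$, or $x^*_{t,c,j}>0$ for some $(t,c)$ with $w_{t,c,j}=\widehat w_{\tau^\star}$. All retained mass then has price at least $\widehat w_{\tau^\star}$ and totals one. Hence $H_{j,\tau'}(x^*,y^*)=1$ for every $\tau'\le\tau^\star$, and $f_k(1)=1$ (this is where $k\le K-1$ is used) gives
\[
\sum_{\tau'\le\tau^\star}(\widehat w_{\tau'}-\widehat w_{\tau'-1})\,f_k\bigl(H_{j,\tau'}(x^*,y^*)\bigr)=\widehat w_{\tau^\star}.
\]
Now write stationarity for that positive unit; its nonnegativity multiplier vanishes by complementary slackness. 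For $y$ you need a multiplier $\iota_{j,\tau}$ for $y_{j,\tau}\ge 0$, which you did not introduce.
\begin{itemize}
\item If the unit is carried mass, $y_{j,\tau^\star}$-stationarity reads $\widehat w_{\tau^\star}-\widehat w_{\tau^\star}-\rho_j-\kappa_{j,\tau^\star}=0$.
\item If it is new mass, $x_{t,c,j}$-stationarity reads $\widehat w_{\tau^\star}-\widehat w_{\tau^\star}-\rho_j-\mu_{t,c,j}=0$.
\end{itemize}
Either way $\rho_j\le 0$, a contradiction.

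Intuitively, because $f_k(1)=1$, at full capacity the regularizer's marginal cost of the cheapest retained unit already equals that unit's price. This leaves no surplus for a positive capacity price. The step uses the $y$-stationarity you set aside for later, since the cheapest retained unit may be carried mass. Once $\rho_j=0$, (iii) is exactly $x$-stationarity and the lemma follows.
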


The next lemma identifies the preemption rule induced by optimality.

\begin{lemma}[Optimal preemption discards the lowest retained prices first]
\label{lem:config-dispose}
Fix $k\le K-1$, and let $(x^*,y^*,z^*)$ be the stage optimizer. If
\[
    \eta_{j,\tau}^{(k-1)}-y_{j,\tau}^*>0,
\]
then
\[
    H_{j,\tau}(x^*,y^*)=1.
\]
Equivalently, whenever some previously retained mass at price level $\widehat w_\tau$ is discarded, the retained post-stage state is already full at that threshold.
\end{lemma}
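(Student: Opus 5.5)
The plan is to argue by contradiction through a local perturbation: shift a little retained mass upward in price, or simply keep more of the discarded mass. This mirrors the threshold argument of \Lemref{lem:free-threshold}. Fix $j$ and $\tau$ with $\eta_{j,\tau}^{(k-1)}-y_{j,\tau}^*>0$, and suppose $H_{j,\tau}(x^*,y^*)<1$. The carry constraint \eqref{eq:config-stage-carry} for $(j,\tau)$ is then slack, so its multiplier vanishes. Increasing $y_{j,\tau}$ by a small $\epsilon>0$ is feasible for \eqref{eq:config-stage-carry}. The only other constraint it touches is the capacity constraint \eqref{eq:config-stage-capacity}, and I will split into two cases according to whether that constraint is tight.

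\emph{Case 1: capacity slack.} The perturbation $y_{j,\tau}\mapsto y_{j,\tau}+\epsilon$ is feasible. It raises $H_{j,\tau'}$ by $\epsilon$ for all $\tau'\le\tau$, and leaves $H_{j,\tau'}$ unchanged for $\tau'>\tau$. The first-order change in the objective is
\[
\widehat w_\tau-\sum_{\tau'\le\tau}(\widehat w_{\tau'}-\widehat w_{\tau'-1})f_k(H_{j,\tau'}).
\]
Since $f_k\le \Expo^{x-1}\le 1$ by \Propref{prop:batch-polynomials}(ii), every summand is at most $\widehat w_{\tau'}-\widehat w_{\tau'-1}$. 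Moreover, $H$ is nonincreasing in $\tau'$, so $H_{j,\tau'}\le 1$ and $f_k(H_{j,\tau'})<1$ whenever $H_{j,\tau'}<1$. Hence the derivative is at least $(\widehat w_\tau-\widehat w_{\tau-1})(1-f_k(H_{j,\tau}))>0$, because $f_k(x)<1$ for $x<1$ by the explicit polynomial form. This contradicts optimality.

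\emph{Case 2: capacity tight.} Here total retained mass equals $1$, while $H_{j,\tau}<1$. So some positive mass sits strictly below threshold $\tau$: either a $y_{j,\sigma}^*>0$ with $\sigma<\tau$, or an $x^*_{t,c,j}>0$ with $w_{t,c,j}<\widehat w_\tau$. I will perform an exchange: decrease that low-priced mass by $\epsilon$ and increase $y_{j,\tau}$ by $\epsilon$. Decreasing $x$ keeps the linking constraint \eqref{eq:config-stage-link} feasible without changing $z$. The net objective change is
\[
\epsilon\,(\widehat w_\tau-\widehat w_\sigma)-\epsilon\sum_{\sigma<\tau'\le\tau}(\widehat w_{\tau'}-\widehat w_{\tau'-1})f_k(H_{j,\tau'}).
\]
When the removed mass is an $x$-variable, $\widehat w_\sigma$ is replaced by $w_{t,c,j}$, and the regularizer sum runs over thresholds in $(w_{t,c,j},\widehat w_\tau]$. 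Every $H_{j,\tau'}$ in this range satisfies $H_{j,\tau'}\le H_{j,\tau'}\big|_{\tau'\ge\dots}$; more precisely, for $\tau'\le\tau$ in this range we need $H_{j,\tau'}<1$ at least at $\tau'=\tau$. This gives a strictly positive gain of at least $(\widehat w_\tau-\widehat w_{\tau-1})(1-f_k(H_{j,\tau}))$, which again contradicts optimality. Hence $H_{j,\tau}=1$.

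\emph{Main obstacle.} The delicate step is Case 2. One must make sure the removed mass is genuinely located at a threshold strictly below $\tau$, and handle the terms of the sum where $H_{j,\tau'}$ may equal $1$ (those terms contribute zero gain but no loss, since $f_k(1)=1$). The careful bookkeeping is that $H_{j,\cdot}$ is nonincreasing in the index, so $H_{j,\tau}<1$ forces strict gain at the top term $\tau'=\tau$.

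An alternative is to phrase the same argument through the KKT relations of \Lemref{lem:config-kkt-main}, extended with the capacity multiplier $\rho_j$ and the carry multipliers. Stationarity in $y_{j,\tau}$ reads $\widehat w_\tau-\sum_{\tau'\le\tau}(\widehat w_{\tau'}-\widehat w_{\tau'-1})f_k(H_{j,\tau'})-\rho_j-\text{(carry multiplier)}+\text{(nonnegativity multiplier)}=0$. The threshold monotonicity then follows exactly as in \Lemref{lem:free-threshold}. I would present the perturbation version, since it is more transparent.
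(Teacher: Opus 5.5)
Your proof is correct, but it takes a different route from the paper's.

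\textbf{The paper's route.} The paper argues entirely through the KKT system. Inside the proof of \Lemref{lem:config-kkt-main} it first shows that the capacity multiplier $\theta_j$ vanishes for every $k\le K-1$. It does this by looking at the lowest-priced retained mass of a full advertiser and using $f_k(1)=1$. Then, because the carry constraint at $(j,\tau)$ is slack, the carry multiplier is $\pi_{j,\tau}=0$. Stationarity in $y_{j,\tau}$ then gives $\sum_{\tau'\le\tau}\Delta w_{\tau'}f_k(H_{j,\tau'})\ge\widehat w_\tau=\sum_{\tau'\le\tau}\Delta w_{\tau'}$. Together with $f_k\le 1$, this forces $f_k(H_{j,\tau'})=1$, hence $H_{j,\tau}=1$.

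\textbf{How your route differs.} Your Case~1 is the primal version of that same stationarity computation. Your Case~2 exchange replaces the step $\theta_j=0$. Because the exchange direction keeps advertiser $j$'s total mass fixed, the capacity constraint never enters. The net first-order gain $\sum_{\sigma<\tau'\le\tau}\Delta w_{\tau'}\bigl(1-f_k(H_{j,\tau'})\bigr)$ is then strictly positive through its top term $\tau'=\tau$.

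\textbf{What each route buys.} Yours is self-contained: it needs no multipliers and no auxiliary fact about $\theta_j$. The paper's is shorter. It also isolates $\theta_j=0$ as a reusable fact, which is needed again for the objective-equality, nonnegativity and feasibility steps in the proof of \Thmref{thm:config-main}.

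\textbf{Small cleanups.}
\begin{itemize}
\item The garbled sentence in Case~2 should simply say that each summand is nonnegative and the $\tau'=\tau$ summand is positive.
\item Your worry about terms with $H_{j,\tau'}=1$ is moot. For $\sigma<\tau'\le\tau$, the removed low-priced mass is excluded from $H_{j,\tau'}$, so $H_{j,\tau'}<1$ automatically. This also guarantees that the perturbed cumulative states stay in $[0,1]$, the domain of $F_k$.
\item State explicitly that the exchange uses $w_{t,c,j}\in\{\widehat w_0,\ldots,\widehat w_T\}$. This is what gives the identity $\widehat w_\tau-w_{t,c,j}=\sum_{\sigma<\tau'\le\tau}\Delta w_{\tau'}$.
\end{itemize}
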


\begin{corollary}[Monotonicity of the price-level state]
\label{cor:config-monotone}
For every advertiser $j$, threshold $\tau$, and stage $k\le K-1$, we have:
\begin{enumerate}[label=(\roman*)]
    \item \textbf{Distribution monotonicity:}
    \[
        H_{j,\tau}(x^*,y^*)
        \ge \sum_{\tau'\ge \tau} \eta_{j,\tau'}^{(k-1)};
    \]
    \item \textbf{Only the smallest retained prices are discarded:}
    \[
        \sum_{\tau'\ge \tau}\bigl(\eta_{j,\tau'}^{(k-1)}-y_{j,\tau'}^*\bigr)
        =
        \sum_{\tau'\ge \tau}\bigl(\eta_{j,\tau'}^{(k-1)}-y_{j,\tau'}^*\bigr)
        f_k\bigl(H_{j,\tau}(x^*,y^*)\bigr).
    \]
\end{enumerate}
\end{corollary}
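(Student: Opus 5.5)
Both parts follow from \Lemref{lem:config-dispose} combined with two simple facts about the cumulative map $H_{j,\tau}$ in \eqref{eq:config-stage-cumulative}. First, $\tau\mapsto H_{j,\tau}(x,y)$ is nonincreasing in $\tau$. Raising the threshold drops terms from both sums in \eqref{eq:config-stage-cumulative}, since $w_{t,c,j}\ge \widehat w_{\tau+1}$ implies $w_{t,c,j}\ge\widehat w_\tau$, and every summand is nonnegative. Second, by the capacity constraint \eqref{eq:config-stage-capacity}, $H_{j,\tau}(x^*,y^*)\le H_{j,1}(x^*,y^*)\le 1$. The plan is to fix $j$ and $\tau$ and split into two cases: either no mass at any level $\tau'\ge\tau$ is discarded, or some is.

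\emph{Case A: no discarding at levels $\ge\tau$.} Here $y^*_{j,\tau'}=\eta^{(k-1)}_{j,\tau'}$ for all $\tau'\ge\tau$. Then $H_{j,\tau}(x^*,y^*)\ge\sum_{\tau'\ge\tau}y^*_{j,\tau'}=\sum_{\tau'\ge\tau}\eta^{(k-1)}_{j,\tau'}$, because the new-mass term is nonnegative; this gives (i). For (ii), both sides equal zero, since every summand $\eta^{(k-1)}_{j,\tau'}-y^*_{j,\tau'}$ vanishes.

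\emph{Case B: some level $\tau_0\ge\tau$ has $\eta^{(k-1)}_{j,\tau_0}-y^*_{j,\tau_0}>0$.} By \Lemref{lem:config-dispose}, $H_{j,\tau_0}(x^*,y^*)=1$. Monotonicity in the threshold and the capacity bound then give $1=H_{j,\tau_0}\le H_{j,\tau}\le 1$, so $H_{j,\tau}(x^*,y^*)=1$. For (i), I will use that the pre-stage state is feasible, i.e.\ $\sum_{\tau'}\eta^{(k-1)}_{j,\tau'}\le 1$. This holds by induction over stages: $\eta^{(k-1)}$ is produced by the previous stage's state update from a solution satisfying \eqref{eq:config-stage-capacity}, and $\eta^{(0)}=0$. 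Hence $\sum_{\tau'\ge\tau}\eta^{(k-1)}_{j,\tau'}\le 1=H_{j,\tau}$. For (ii), Proposition~\ref{prop:batch-polynomials} gives $f_k(1)=\bigl(1-0\bigr)^{K-k}=1$ for $k\le K-1$. So the right-hand side is the left-hand side multiplied by $f_k(H_{j,\tau})=1$, and the identity holds.

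The only step needing any care is the invariant $\sum_{\tau'}\eta^{(k-1)}_{j,\tau'}\le 1$. I would record it as a one-line induction: the state update sums exactly the quantities constrained in \eqref{eq:config-stage-capacity}, grouped by price level. Everything else is immediate from \Lemref{lem:config-dispose}, which I take as given.
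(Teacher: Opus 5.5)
Your proof is correct and follows the paper's argument essentially verbatim. It uses the same case split: if nothing at levels $\tau'\ge\tau$ is discarded, both parts are immediate; otherwise Lemma~\ref{lem:config-dispose} plus monotonicity of $H_{j,\cdot}$ in the threshold forces $H_{j,\tau}=1$, and $f_k(1)=1$ closes (ii). You also make explicit two facts the paper uses silently: the capacity bound $H_{j,\tau}\le 1$, needed for the equality $H_{j,\tau}=1$, and the invariant $\sum_{\tau'}\eta^{(k-1)}_{j,\tau'}\le 1$, needed for (i). That is a welcome tightening.
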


The detailed proofs are deferred to \Secref{sec:ec-config}. Conceptually, these statements play the same role as the graph decomposition in \Secref{sec:batch}: they explain how the state evolves and therefore how the advertiser-side dual mass should be charged.

\subsection{Dual construction and competitive ratio}

Recall the offline dual benchmark \Eqref{eq:offline-dual-config}. Guided by \Lemref{lem:config-kkt-main}, we construct a fitted dual solution stage by stage. For stages $k\le K-1$, set
\[
    \widehat\alpha_t := \lambda_t^{(k)},
    \qquad
    \widehat\gamma_{t,c,j}:=\mu_{t,c,j}^{(k)},
\]
and define the advertiser-side increment by
\begin{equation}
\begin{aligned}
    \Delta\widehat\beta_j^{(k)}
    :={}& \sum_{t\in U_k}\sum_{c\in\mathcal C} x_{t,c,j}^{(k)}
           \sum_{\tau:\,\widehat w_\tau\le w_{t,c,j}}
           (\widehat w_\tau-\widehat w_{\tau-1})f_k\bigl(H_{j,\tau}^{(k)}\bigr) \\
        &\qquad - \sum_{\tau\in[T]} \widehat w_\tau
           \bigl(\eta_{j,\tau}^{(k-1)}-y_{j,\tau}^{(k)}\bigr).
\end{aligned}
\label{eq:config-beta-increment-main}
\end{equation}
At the final stage, $F_K\equiv 0$, so the stage problem is linear and we use an optimal dual solution of the residual LP to define the last advertiser-side increment.

The important point is that approximate dual feasibility reduces to exactly the same scalar recursion as in \Secref{sec:batch}, but now one threshold at a time.

\begin{theorem}[Competitive ratio for $K$-stage configuration allocation]
\label{thm:config-main}
Let $f_1,\ldots,f_K$ be the polynomial family from \Propref{prop:batch-polynomials}. Then the stage-wise algorithm defined by \Eqref{eq:config-stage-program}--\Eqref{eq:config-stage-carry} is $\GammaFunc{K}$-competitive for $K$-stage fractional configuration allocation.
\end{theorem}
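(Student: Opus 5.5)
We follow the two-stage template of \Lemref{lem:generic-dual-fitting}, with the dual certificate for \Eqref{eq:offline-dual-config} built stage by stage as in the construction just given: $\widehat\alpha_t=\lambda_t^{(k)}$ and $\widehat\gamma_{t,c,j}=\mu^{(k)}_{t,c,j}$ for $t\in U_k$, advertiser increments \Eqref{eq:config-beta-increment-main}, and an optimal residual-LP dual at stage $K$.

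\emph{Step 1 (exact accounting).} For each $k\le K-1$, we will show that $\sum_{t\in U_k}\widehat\alpha_t+\sum_j\Delta\widehat\beta_j^{(k)}$ equals the stage reward increment $\sum w_{t,c,j}x^*_{t,c,j}-\sum_{j,\tau}\widehat w_\tau(\eta^{(k-1)}_{j,\tau}-y^*_{j,\tau})$. Multiply (iii) of \Lemref{lem:config-kkt-main} by $x^*_{t,c,j}$; complementary slackness on the linking constraint \Eqref{eq:config-stage-link} turns $\sum_j\mu x^*$ into $\sum_j \xi\mu z^*$, and the $\psi$ term vanishes. Then use (ii) times $z^*_{t,c}$, with $\phi z^*=0$, and complementary slackness on \Eqref{eq:config-stage-choice}. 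Together these convert $\sum_j\mu x^*$ into $\lambda_t$, giving the identity. The final stage is plain LP strong duality for the residual problem, with the retained mass valued at $\widehat w_\tau$.

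\emph{Step 2 (advertiser-side lower bound).} For each $j$ and threshold $\tau$, we aim to show $\widehat\beta_j$ after stage $k$ is at least $\sum_\tau(\widehat w_\tau-\widehat w_{\tau-1})\sum_{s\le k}(H^{(s)}_{j,\tau}-H^{(s-1)}_{j,\tau})f_s(H^{(s)}_{j,\tau})$, in analogy with \Lemref{lem:free-beta-increment}. Rewriting the new-mass part of \Eqref{eq:config-beta-increment-main} by swapping sums gives $\sum_\tau(\widehat w_\tau-\widehat w_{\tau-1})f_k(H_{j,\tau})\cdot(\text{new mass above }\tau)$. The disposal term $\widehat w_\tau=\sum_{\tau'\le\tau}(\widehat w_{\tau'}-\widehat w_{\tau'-1})$ must be charged against $f_k$; Corollary~\ref{cor:config-monotone}(ii) is exactly what allows it. Since discarded mass occurs only where $H=1$ (\Lemref{lem:config-dispose}), the disposed amount above $\tau$ can be weighted by $f_k(H_{j,\tau})$, so the net is $\sum_\tau(\widehat w_\tau-\widehat w_{\tau-1})f_k(H^{(k)}_{j,\tau})(H^{(k)}_{j,\tau}-H^{(k-1)}_{j,\tau})$. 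Corollary~\ref{cor:config-monotone}(i) ensures these differences are nonnegative, so the sequence $u_s=H^{(s-1)}_{j,\tau}$ is nondecreasing, as the recursion needs.

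\emph{Step 3 (approximate feasibility via the recursion).} Fix $(t,c,j)$ with $t\in U_k$. From (ii) and $\phi\ge0$, the constraint $\widehat\alpha_t\ge\sum_j\xi\widehat\gamma$ holds exactly. For the first dual constraint, (iii) with $\psi\ge0$ gives $\widehat\gamma_{t,c,j}\ge\sum_{\tau:\widehat w_\tau\le w_{t,c,j}}(\widehat w_\tau-\widehat w_{\tau-1})(1-f_k(H^{(k)}_{j,\tau}))$. We then need a lower bound on the final $\widehat\beta_j$ threshold by threshold. For each $\tau\le w_{t,c,j}$, combine Step 2 up to stage $k$ with the contribution of later stages, which adds mass $1-H^{(k)}$ at worst at slope $f$ evaluated at later states; this is precisely the term $(1-x)f_k(u_k+x)$ bounded via the max-recursion defining $f_k$ in \Propref{prop:batch-polynomials}. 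Then apply \Eqref{eq:batch-recursion-main} per threshold to get $\ge\Gamma(K)(\widehat w_\tau-\widehat w_{\tau-1})$, and sum over $\tau$ to obtain $\Gamma(K)w_{t,c,j}$. Edges arriving in the final stage are handled by residual-LP dual feasibility, together with $f_k\le \Expo^{x-1}\le1$.

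The main obstacle is Step 2's handling of preemption, i.e.\ verifying that the disposal charge fits exactly into the telescoping form using \Lemref{lem:config-dispose} and Corollary~\ref{cor:config-monotone}. A second obstacle is the precise bookkeeping in Step 3 that matches the recursion's $(1-x)f_k(u_k+x)$ term to the mix of $\widehat\gamma$ and future $\beta$-increments, mirroring the batch-arrival proof in \Secref{sec:ec-batch}.
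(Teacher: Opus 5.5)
Your treatment of stages $k\le K-1$ is the paper's argument: the KKT accounting, the rewrite $\Delta\widehat\beta_j^{(k)}=\sum_\tau \Delta w_\tau\,(H^{(k)}_{j,\tau}-H^{(k-1)}_{j,\tau})\,f_k(H^{(k)}_{j,\tau})$ with $\Delta w_\tau=\widehat w_\tau-\widehat w_{\tau-1}$ via \Corref{cor:config-monotone}, and then the recursion applied threshold by threshold. However, your Step~3 misdescribes where the recursion's term comes from. Later stages contribute nothing you can count on, since no later request need touch $j$, so they cannot ``add mass $1-H^{(k)}$''. Set $u_s=H^{(s-1)}_{j,\tau}$ and $x=H^{(k)}_{j,\tau}-H^{(k-1)}_{j,\tau}$. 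Then $1-(1-x)f_k(u_k+x)$ is simply the $\widehat\gamma$-bound $1-f_k(H^{(k)}_{j,\tau})$ plus the \emph{current} increment $x\,f_k(H^{(k)}_{j,\tau})$. Later increments are dropped using only their nonnegativity. The max-recursion defining $f_k$ is used inside the proof of \Lemref{lem:batch-recursion}, not when you apply it.

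The genuine gap is the final stage, which ``residual-LP dual feasibility together with $f_k\le \Expo^{x-1}\le 1$'' does not handle.

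\emph{The increment has the wrong form for your argument.} With $\widehat\alpha_t=\bar\lambda_t$, exact accounting forces $\Delta\widehat\beta_j^{(K)}=\bar\theta_j-\sum_\tau\eta^{(K-1)}_{j,\tau}(\widehat w_\tau-\bar\pi_{j,\tau})$, because the stage-$K$ objective carries the constant $-\sum_{j,\tau}\widehat w_\tau\eta^{(K-1)}_{j,\tau}$. So $\widehat\beta_j$ does not contain $\bar\theta_j$ in full.

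\emph{Feasibility alone is too weak.} From $\bar\mu_{t,c,j}+\bar\theta_j\ge w_{t,c,j}$ and $\widehat w_\tau-\bar\pi_{j,\tau}\le\bar\theta_j$ you only get $\bar\mu_{t,c,j}+\Delta\widehat\beta_j^{(K)}\ge w_{t,c,j}\bigl(1-\sum_\tau\eta^{(K-1)}_{j,\tau}\bigr)$. This vanishes whenever $j$ is full before stage $K$, for instance with lowest-price mass, when a high-price request then arrives. In that case the earlier increments are also zero at every higher threshold, so the certificate falls far short of $\Gamma(K)\,w_{t,c,j}$.

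\emph{What is missing.} You need the structure of an extreme-point optimal residual dual (\Lemref{lem:config-last-stage}): $\bar\theta_j=\widehat w_{\tau_j^\star}$, and on the support of $\eta^{(K-1)}_{j,\cdot}$, $\bar\pi_{j,\tau}=0$ for $\tau\le\tau_j^\star$ and $\bar\pi_{j,\tau}=\widehat w_\tau-\bar\theta_j$ otherwise. A two-case comparison of $w_{t,c,j}$ with $\bar\theta_j$ then gives the refined bound $\bar\mu_{t,c,j}+\Delta\widehat\beta_j^{(K)}\ge\sum_{\tau\le\tau^\dagger}\Delta w_\tau\bigl(1-H^{(K-1)}_{j,\tau}\bigr)$, where $\widehat w_{\tau^\dagger}=w_{t,c,j}$. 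In words, retained mass priced below $w_{t,c,j}$ is charged only its own price $\widehat w_\tau$, not a full $w_{t,c,j}$.

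\emph{The wrong inequality.} You then need the identity $f_{K-1}(x)=x$ to rewrite $1-H^{(K-1)}_{j,\tau}$ as $1-f_{K-1}(H^{(K-1)}_{j,\tau})$ and apply the recursion with $k=K-1$. The bound $f_k\le \Expo^{x-1}$ you cite points the wrong way: what is needed is $f_{K-1}(H)\ge H$, the analogue of the step $f_{k'}(y)\ge y$ in the batch proof.

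\emph{Nonnegativity of the last increment.} You need $\Delta\widehat\beta_j^{(K)}\ge 0$ to drop it for edges from earlier stages. This is not covered by \Corref{cor:config-monotone}(i), and must be derived from $\widehat w_\tau-\bar\pi_{j,\tau}\le\bar\theta_j$ and $\sum_\tau\eta^{(K-1)}_{j,\tau}\le 1$.
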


\begin{proof}{Proof.}
The full proof is given in \Secref{sec:ec-config}. The argument follows the same two-step pattern as before. First, the KKT multipliers in \Lemref{lem:config-kkt-main} yield a dual update for which the stage-wise increment of the fitted dual objective matches the stage-wise primal reward exactly. Second, \Lemref{lem:config-dispose} and \Corref{cor:config-monotone} reduce approximate dual feasibility to the scalar recursion from \Secref{sec:batch}, applied separately at each threshold. Weak duality for \Eqref{eq:offline-dual-config} then gives the factor $\GammaFunc{K}$.\qed
\end{proof}

\begin{remark}
This theorem subsumes the preceding adversarial models in a clean way. The reward-indexed state from \Secref{sec:free-disposal} reappears here after discretizing the reward axis into price levels, and the stage-dependent regularizers from \Secref{sec:batch} reappear unchanged. What disappears is the helpful support-graph decomposition; direct convex duality replaces it.
\end{remark}

%% file: tex/extensions.tex
\section{Short extensions and further directions}
\label{sec:extensions}
\label{sec:ec-special-cases}

This section collects nearby models that fit the same convex-programming viewpoint with only modest changes in state and scaling. The purpose is not to reproduce a full proof for each variant. Rather, the point is that once the state variable, the regularized one-step objective, and the KKT-based dual update are written down correctly, the earlier arguments can usually be reused with little additional work.

\subsection{AdWords / online budgeted allocation}
\label{subsec:adwords}
\label{subsec:ec-adwords-special}

The AdWords problem is the canonical budget-allocation analogue of online matching. Advertiser $j$ has budget $B_j$ and submits bid $b_{tj}$ for query $t$. The platform may assign at most one advertiser to each arriving query, and if $(t,j)$ is selected then advertiser $j$ pays $b_{tj}$ and the remaining budget decreases accordingly.

The right state variable is the normalized budget consumption
\[
  y_j := \frac{1}{B_j}\sum_{s \le t} b_{sj}x_{sj} \in [0,1].
\]
With this normalization, the arrival-wise regularized program is
\begin{equation}
\max_{x_{tj}\ge 0} \;
\sum_j b_{tj}x_{tj} - \sum_j B_j F\!\left(y_j + \frac{b_{tj}}{B_j}x_{tj}\right)
\quad \text{s.t.} \;
\sum_j x_{tj}\le 1, \;\;
 y_j+\frac{b_{tj}}{B_j}x_{tj}\le 1 \;\;\forall j.
\label{eq:adwords-program}
\end{equation}
Geometrically, this is the same object as vertex-weighted matching: the unit-capacity state $y_j$ is replaced by normalized budget load, and the load increment induced by query $t$ is $b_{tj}/B_j$ rather than $x_{tj}$. This is exactly the scaling that appears in the inventory-balancing view of AdWords \citep{mehta2007adwords,buchbinder2007online}. The fitted dual construction is still arrival-by-arrival: after solving the convex program, one fixes the online dual for the arriving query and increments the advertiser-side dual variables according to the normalized budget consumption.

\begin{proposition}[{\citet{mehta2007adwords,buchbinder2007online}}]
\label{prop:adwords-solution}
With the exponential regularizer, the algorithm defined by \eqref{eq:adwords-program} is $(1-1/\Expo)$-competitive for fractional online budgeted allocation in the small-bid regime.
\end{proposition}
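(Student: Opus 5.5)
We follow the vertex-weighted template of \Propref{prop:weighted-main} with normalized budget load as the state. Write the KKT system of \eqref{eq:adwords-program} with multipliers $\alpha_t^\ast\ge 0$ for $\sum_j x_{tj}\le 1$, $\lambda_j^\ast\ge 0$ for the budget constraints, and $\theta_{tj}^\ast\ge 0$ for nonnegativity. Differentiating in $x_{tj}$, the factor $B_j$ in front of $F$ cancels against the chain-rule factor $b_{tj}/B_j$, so stationarity reads
\[
b_{tj}\bigl(1-f(y_j^{(t)})\bigr)-\alpha_t^\ast-\lambda_j^\ast\tfrac{b_{tj}}{B_j}+\theta_{tj}^\ast=0 ,
\]
where $y_j^{(t)}=y_j^{(t-1)}+b_{tj}x_{tj}^\ast/B_j$. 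This is exactly \eqref{eq:weighted-kkt} with $r_j$ replaced by $b_{tj}$ and the load increment rescaled.

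\textbf{Step 1 (dual update and exact accounting).} Use the dual of the fractional AdWords LP, namely $\min \sum_t\alpha_t+\sum_j B_j\beta_j$ subject to $\alpha_t+b_{tj}\beta_j\ge b_{tj}$. Set $\widehat\alpha_t:=\alpha_t^\ast$. Increment the advertiser-side dual by
\[
B_j\,\Delta\widehat\beta_j := x_{tj}^\ast\,(b_{tj}-\alpha_t^\ast).
\]
Summing over $j$ and applying complementary slackness for the arrival constraint gives dual increment equal to $\sum_j b_{tj}x_{tj}^\ast$, as in \Lemref{lem:warmup-primal-equals-dual}.

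\textbf{Step 2 (advertiser-side lower bound).} When $x_{tj}^\ast>0$, stationarity and $\lambda_j^\ast\ge 0$ give $b_{tj}-\alpha_t^\ast\ge b_{tj}f(y_j^{(t)})$. Multiplying by $x_{tj}^\ast=B_j(y_j^{(t)}-y_j^{(t-1)})/b_{tj}$ and using convexity as in \eqref{eq:convexity} yields $\Delta\widehat\beta_j\ge F(y_j^{(t)})-F(y_j^{(t-1)})$. Telescoping gives $\widehat\beta_j\ge F(y_j^{(t)})$.

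\textbf{Step 3 (per-edge feasibility).} Fix $(t,j)$ and split into two cases.
\begin{enumerate}
\item If $y_j^{(t)}$ is essentially $1$, then $b_{tj}\widehat\beta_j\ge b_{tj}F(1)=(1-1/\Expo)b_{tj}$.
\item If the budget is slack, then $\lambda_j^\ast=0$, so $\alpha_t^\ast\ge b_{tj}(1-f(y_j^{(t)}))$. Adding $b_{tj}\widehat\beta_j\ge b_{tj}F(y_j^{(t)})$ and applying \eqref{eq:warmup-functional-identity} gives $(1-1/\Expo)b_{tj}$.
\end{enumerate}
\Lemref{lem:generic-dual-fitting} then concludes, up to the small-bid loss.

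\textbf{Main obstacle.} The delicate step is the saturated case with a non-infinitesimal bid. An advertiser can be blocked, with $\lambda_j^\ast>0$, while its load is only $1-b_{tj}/B_j$. Also, the $\lambda_j^\ast b_{tj}/B_j$ term in stationarity must be dropped, which is valid only in the right direction. I would first try to handle this by noting that tightness of the budget constraint means $y_j^{(t)}=1$ exactly, since fractional assignment can fill the budget exactly. This places the edge in case 1 with no loss. Any residual error from the final dual scaling or budget rounding would be bounded by $O(\max_{t,j} b_{tj}/B_j)$, which vanishes in the small-bid regime.
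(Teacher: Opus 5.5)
Your proposal is correct and is essentially the paper's argument. The paper uses the same KKT-derived update $\widehat\alpha_t=\alpha_t^\ast$, $B_j\Delta\widehat\beta_j=x_{tj}^\ast(b_{tj}-\alpha_t^\ast)$ (its Exercise~\ref{ex:adwords-kkt} simply absorbs the factor $B_j$ into $\widehat\beta_j$), the same convexity telescoping on normalized load, and the same slack/saturated case split closed by $1-f(y)+F(y)=1-1/\Expo$. As you note yourself, in this fractional program $\lambda_j^\ast>0$ forces $y_j^{(t)}=1$ exactly, so the two cases are exhaustive and no $O(\max_{t,j} b_{tj}/B_j)$ loss ever arises; your closing small-bid hedge is therefore unnecessary.
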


The proof is the same dual-fitting argument as in \Secref{sec:matching}: the KKT system yields the online dual update, convexity of $F$ yields the lower bound on the advertiser-side dual variable, and the telescoping lower bound is written for normalized budget load. The only substantive change is that all loads are measured relative to budgets. In short, AdWords does not require a new primal--dual philosophy; it only requires the correct notion of load.

\begin{exercise}
\label{ex:adwords-kkt}
Derive the KKT conditions for \eqref{eq:adwords-program} and show that the fitted dual update takes the form $\widehat\alpha_t:=\alpha_t^*$ and $\widehat\beta_j\leftarrow \widehat\beta_j+x_{tj}^*(b_{tj}-\alpha_t^*)$. Verify that the analogue of the primal-gain-equals-dual-gain lemma from \Secref{sec:matching} holds with the same complementary-slackness argument.
\end{exercise}

\subsection{Costly cancellations}
\label{subsec:costly-cancel}

In the free-disposal model of \Secref{sec:free-disposal}, previously allocated mass may be discarded at no cost. In many applications, however, revoking an allocation incurs a buyback cost. Suppose that disposing of one unit of allocation at reward level $r$ costs $\kappa r$ for some buyback parameter $\kappa\ge 0$. Then the cancellation term in \Eqref{eq:free-program} is scaled by $(1+\kappa)$:
\[
\int_0^{\infty}
(\PreDensity{\OfflineIdx}{\RewardVar}-\PostDensity{\OfflineIdx}{\RewardVar})
\RewardVar\,\dd \RewardVar
\;\longrightarrow\;
(1+\kappa)\int_0^{\infty}
(\PreDensity{\OfflineIdx}{\RewardVar}-\PostDensity{\OfflineIdx}{\RewardVar})
\RewardVar\,\dd \RewardVar.
\]
When $\kappa=0$ we recover free disposal. As $\kappa$ increases, the algorithm becomes more reluctant to replace old allocations with new ones.

The convex-programming template extends directly. The stage program is unchanged except for the factor $(1+\kappa)$ in the disposal term, the KKT system yields the same style of threshold condition, and the dual-fitting proof again reduces to a one-dimensional identity. The effect of the buyback parameter is to decrease the advertiser-side dual increment and therefore the attainable competitive ratio. \citet{ekbatani2022cancellations} show that the optimal ratio varies continuously with $\kappa$, interpolating between $1-1/\Expo$ at $\kappa=0$ and $1/2$ as $\kappa\to\infty$.

\begin{exercise}
\label{ex:costly-threshold}
Show that with buyback parameter $\kappa>0$, the threshold from \Lemref{lem:free-threshold} becomes
$G(r)=(1+\kappa)r-\int_0^r f(Y_j^{(t)}(\omega))\dd\omega$.
Verify that $G$ remains nondecreasing when $f(y)=\Expo^{y-1}$, and explain why the threshold is higher than in the free-disposal case.
\end{exercise}

\begin{exercise}
\label{ex:costly-cr}
Assume $f(y)=\Expo^{y-1}$. Show that the slack-capacity case still yields the identity $\Gamma=1-f(y)+F(y)$, while the active-capacity case is weakened by the cancellation cost through the smaller $\beta$-increment. Recover the resulting competitive ratio as a function of $\kappa$; see \citet{ekbatani2022cancellations}.
\end{exercise}

%% file: tex/additional_special_cases.tex
\subsection{Vertex-weighted \texorpdfstring{$b$}{b}-matching}
\label{subsec:ec-bmatching}

Suppose offline node $j$ has capacity $B_j\ge 1$ rather than unit capacity, and each unit of that capacity is worth $w_j$. The offline LP benchmark is
\begin{equation}
\begin{aligned}
    \max \quad & \sum_{(t,j)\in E} w_j x_{tj} \\
    \text{s.t.}\quad
    & \sum_{j:(t,j)\in E} x_{tj} \le 1 && \forall t, \\
    & \sum_{t:(t,j)\in E} x_{tj} \le B_j && \forall j, \\
    & x_{tj}\ge 0.
\end{aligned}
\label{eq:ec-bmatching-offline}
\end{equation}
The right state variable is the normalized load
\[
  \bar y_j := \frac{1}{B_j}\sum_{s:(s,j)\in E} x_{sj} \in [0,1].
\]
After this normalization, the fully online regularized program becomes
\begin{equation}
\begin{aligned}
    \max_{x_{tj}\ge 0}\quad
    & \sum_{j\in N(t)} w_j x_{tj}
      - \sum_{j\in V} B_j w_j
        F\!\left(\bar y_j + \frac{x_{tj}}{B_j}\right) \\
    \text{s.t.}\quad
    & \sum_{j\in N(t)} x_{tj} \le 1, \\
    & x_{tj} \le B_j(1-\bar y_j) \qquad \forall j\in N(t).
\end{aligned}
\label{eq:ec-bmatching-program}
\end{equation}
The factor $B_j$ in front of the regularizer is exactly what preserves the geometry of the unit-capacity model: the penalty is applied to the fraction of capacity consumed rather than to the raw amount of consumed capacity.

The same normalization works in the batch-arrival model after replacing $F$ with the stage-dependent family $F_k$ and replacing $\bar y_j$ by the normalized pre-stage load. Every statement from the matching sections then carries over with $y_j$ replaced by $y_j/B_j$.

\begin{proposition}
\label{prop:ec-bmatching}
In fractional vertex-weighted $b$-matching, the convex-regularization recipe yields the same optimal competitive ratios as in the unit-capacity case after normalizing loads by $B_j$. In particular, the exponential regularizer gives the classical $(1-1/\Expo)$ guarantee in the fully online model, and the polynomial family from \Secref{sec:batch} gives the factor $\Gamma(K)=1-(1-1/K)^K$ in the $K$-stage model.
\end{proposition}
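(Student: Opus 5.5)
The plan is to reduce Proposition~\ref{prop:ec-bmatching} to the unit-capacity analyses by a change of variables, and then to re-run the two-stage dual-fitting template. Define normalized allocations $\tilde x_{tj}:=x_{tj}/B_j$, so that $\bar y_j=\sum_s \tilde x_{sj}\in[0,1]$ and the program \eqref{eq:ec-bmatching-program} reads
\[
\max\ \sum_j B_jw_j\tilde x_{tj}-\sum_j B_jw_jF(\bar y_j+\tilde x_{tj})
\quad\text{s.t.}\quad
\sum_j B_j\tilde x_{tj}\le 1,\qquad 0\le \tilde x_{tj}\le 1-\bar y_j .
\]
This is the vertex-weighted program \eqref{eq:weighted-program} with effective weight $B_jw_j$ and a reweighted arrival constraint. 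The KKT system is therefore the analogue of \eqref{eq:weighted-kkt}. Stationarity in $x_{tj}$ reads
\[
w_j\bigl(1-f(\bar y_j^{(t)})\bigr)-\alpha_t^*-\lambda_j^*/B_j+\theta_{tj}^*=0,
\]
while complementary slackness is unchanged.

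In the fully online model I will define the fitted duals as $\widehat\alpha_t:=\alpha_t^*$ and $\Delta\widehat\beta_j^{(t)}:=x_{tj}^*(w_j-\alpha_t^*)$, exactly as in \eqref{eq:weighted-dual-update}. Primal gain equals dual gain by the argument of Lemma~\ref{lem:warmup-primal-equals-dual}, using \eqref{eq:warmup-kkt-3}. For the resource side, when $x_{tj}^*>0$ stationarity gives $w_j-\alpha_t^*\ge w_jf(\bar y_j^{(t)})$. Multiplying by $x_{tj}^*=B_j(\bar y_j^{(t)}-\bar y_j^{(t-1)})$ and applying the convexity step \eqref{eq:convexity} yields $\Delta\widehat\beta_j^{(t)}\ge B_jw_j\bigl(F(\bar y_j^{(t)})-F(\bar y_j^{(t-1)})\bigr)$. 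Telescoping then gives $\widehat\beta_j\ge B_jw_jF(\bar y_j)$.

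The offline dual of \eqref{eq:ec-bmatching-offline} has objective $\sum_t\alpha_t+\sum_jB_j\beta_j$ and constraints $\alpha_t+\beta_j\ge w_j$. I will therefore use $\beta_j:=\widehat\beta_j/B_j$ as the fitted offline dual, so that the dual objective still equals $\ALG$. Feasibility is checked in two cases. If $\bar y_j=1$, then $\beta_j\ge w_jF(1)=(1-1/\Expo)w_j$. Otherwise $\lambda_j^*=0$, so $\alpha_t\ge w_j(1-f(\bar y_j))$, and the identity \eqref{eq:warmup-functional-identity} closes the argument exactly as in Proposition~\ref{prop:weighted-main}. Lemma~\ref{lem:generic-dual-fitting} then gives the ratio $1-1/\Expo$.

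For the $K$-stage model I will apply the same substitution to \eqref{eq:batch-stage-program} with $F_k$ in place of $F$ and the penalty $B_jw_jF_k(\bar y_j^{(k-1)}+\sum_t x_{tj}^{(k)}/B_j)$. The KKT decomposition of Proposition~\ref{prop:batch-structure} goes through verbatim in normalized loads, with uniformity stated in terms of $w_j(1-f_k(\bar y_j^{(k)}))$. I will set $\Delta\widehat\beta_j^{(k)}:=w_jB_j(\bar y_j^{(k)}-\bar y_j^{(k-1)})f_k(\bar y_j^{(k)})$ and again divide by $B_j$ to obtain the offline dual. Dual feasibility is then exactly the recursion \eqref{eq:batch-recursion-main} evaluated at the normalized loads, and this yields $\GammaFunc{K}$.

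The main obstacle is bookkeeping the factor $B_j$ consistently. Three quantities must stay aligned: the offline-capacity multiplier (which appears as $\lambda_j^*/B_j$), the dual objective weight $B_j\beta_j$, and the final-stage residual-LP duals in the batch model. I will verify these by checking the unit-capacity identity after substituting $x=B_j\tilde x$ in each KKT equation. The remaining steps are direct transcriptions of the earlier proofs.
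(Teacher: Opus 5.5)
Your proposal is correct and takes the same route as the paper. The paper simply asserts that, once loads are normalized by $B_j$ and the regularizer is scaled by $B_j w_j$, every step of the unit-capacity and $K$-stage analyses carries over; your write-up supplies the bookkeeping it leaves implicit, notably taking $\widehat\beta_j/B_j$ as the offline dual to match the $B_j\beta_j$ term in the $b$-matching dual objective, and using $B_j$-weighted mass conservation inside the batch components.
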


The integral problem can then be handled by rounding. Under standard large-capacity assumptions, independent rounding of the fractional solution after a slight capacity reduction gives the usual loss of order $O(\sqrt{\log B_{\min}/B_{\min}})$, where $B_{\min}:=\min_j B_j$.

\subsection{Choice-based assortment and configuration views}
\label{subsec:ec-assortment-special}

The configuration-allocation model also captures assortment-style decisions. One may think of an arriving user $t$ as a customer and a feasible configuration $c\in\mathcal C_t$ as an offered assortment, ranked menu, or page layout. The coefficients $\xi_{t,c,j}$ specify how much resource $j$ is consumed when configuration $c$ is chosen, while $w_{t,c,j}$ specifies the associated revenue contribution. Under this interpretation, the stage-wise program from \Secref{sec:configuration} is already an online assortment problem with resource constraints.

The modeling advantage is that one does not need a separate theorem for every such variant. Once an online assortment or whole-page decision can be written as ``choose one configuration from a feasible family,'' the same price-level state, the same regularized stage program, and the same KKT-based dual construction apply. This is one reason the configuration model is a natural endpoint for Part~I: many seemingly different online-allocation problems become different faces of the same convex-programming template.

For representative examples of this broader viewpoint, see \citet{devanur2016wholepage} for whole-page optimization and \citet{aouad2023assortment} for online assortment optimization on two-sided platforms.

%% file: tex/ec.tex
\section{Supplementary proofs for batch arrival}
\label{sec:ec-batch}

For this appendix only, it is convenient to index the stage loads one step earlier than in the main text. For a
fixed run of the algorithm in \Secref{sec:batch}, write
\[
    y_j^{(1)} := 0,
    \qquad
    y_j^{(k+1)} := y_j^{(k)} + \sum_{t:(t,j)\in E_k} x_{tj}^{(k)}
    \qquad (k=1,\ldots,K),
\]
where \(x^{(k)}=\{x_{tj}^{(k)}\}\) is the optimizer of the stage-\(k\) convex program. Thus \(y_j^{(k)}\) is the load
on offline node \(j\) \emph{before} stage \(k\), while \(y_j^{(k+1)}\) is the load after stage \(k\).

\subsection{Proof of \Propref{prop:batch-polynomials}}

\begin{proof}{Proof of \Propref{prop:batch-polynomials}.}
Fix a stage \(k\le K-1\), and write \(n:=K-k\ge 1\). Then
\[
    f_k(y)=\left(1-\frac{1-y}{n}\right)^n.
\]
Differentiating gives
\[
    f_k'(y)=\left(1-\frac{1-y}{n}\right)^{n-1}\ge 0,
\]
so \(f_k\) is increasing on \([0,1]\). Since \(F_k'(y)=f_k(y)\) and \(f_k\) is nonconstant and increasing,
\(F_k\) is differentiable and strictly convex.

Next, set \(z=(1-y)/n\in[0,1]\). The elementary bound \(1-z\le \Expo^{-z}\) yields
\[
    f_k(y)=\bigl(1-z\bigr)^n \le \Expo^{-nz}=\Expo^{y-1},
\]
which proves the domination by the exponential rule.

For the pointwise convergence, let \(k=1\) and write \(n=K-1\). Then
\[
    f_1(y)=\left(1-\frac{1-y}{n}\right)^n \xrightarrow[n\to\infty]{} \Expo^{y-1}
\]
by the standard limit \((1-a/n)^n\to \Expo^{-a}\).

Finally, define
\[
    g(y):=(1-y)f_1(y)
    = (1-y)\left(1-\frac{1-y}{K-1}\right)^{K-1}.
\]
With the change of variable \(u=1-y\), this becomes
\[
    g(u)=u\left(1-\frac{u}{K-1}\right)^{K-1},
    \qquad u\in[0,1].
\]
Differentiating,
\[
    g'(u)=\left(1-\frac{u}{K-1}\right)^{K-2}\left(1-\frac{Ku}{K-1}\right).
\]
Hence \(g\) is maximized at \(u=(K-1)/K\), i.e., at \(y=1/K\). Substituting gives
\[
    \max_{y\in[0,1]} (1-y)f_1(y)
    = \frac{K-1}{K}\left(1-\frac{1}{K}\right)^{K-1}
    = \left(1-\frac{1}{K}\right)^K.
\]
The last display is equivalent to
\[
    \GammaFunc{K} = 1-\left(1-\frac{1}{K}\right)^K
    = \min_{y\in[0,1]} \bigl[1-(1-y)f_1(y)\bigr],
\]
which is the required formula.\qed
\end{proof}

The competitive-ratio proof relies on the following one-dimensional recursive inequality.

\begin{lemma}
\label{lem:batch-recursion}
Fix \(K\in\NN\), \(k\in\{1,\ldots,K-1\}\), and \(x\in[0,1]\). For every nondecreasing sequence
\[
    0=u_1 \le u_2 \le \cdots \le u_k \le 1-x,
\]
we have
\[
    (1-x)f_k(u_k+x)
    - \sum_{s=1}^{k-1} (u_{s+1}-u_s) f_s(u_{s+1})
    \le 1-\GammaFunc{K}.
\]
Equivalently,
\[
    1-(1-x)f_k(u_k+x)
    + \sum_{s=1}^{k-1} (u_{s+1}-u_s) f_s(u_{s+1})
    \ge \GammaFunc{K}.
\]
\end{lemma}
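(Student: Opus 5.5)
The plan is to prove the first form of the inequality by induction on $k$, peeling off one stage at a time. The only tool is the defining recursion $f_{s}(a)=\max_{y\in[0,1-a]}(1-y)f_{s+1}(a+y)$ from \Propref{prop:batch-polynomials}. Each induction step absorbs the last increment $u_k-u_{k-1}$ into a new ``$x$''. Write
\[
\Phi_k(u_1,\ldots,u_k;x):=(1-x)f_k(u_k+x)-\sum_{s=1}^{k-1}(u_{s+1}-u_s)f_s(u_{s+1}).
\]
The claim is $\Phi_k\le 1-\GammaFunc{K}$ for all admissible sequences.

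\emph{Base case $k=1$.} Here $u_1=0$ and the sum is empty, so $\Phi_1=(1-x)f_1(x)$. By part (iv) of \Propref{prop:batch-polynomials},
\[
(1-x)f_1(x)\le \max_{y\in[0,1]}(1-y)f_1(y)=1-\GammaFunc{K}.
\]

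\emph{Inductive step $k-1\to k$, for $2\le k\le K-1$.} Set $d:=u_k-u_{k-1}\ge 0$.
\begin{itemize}
\item First apply the recursion for $f_{k-1}$ at the point $a=u_k$ with the feasible choice $y=x$. This is feasible because $x\le 1-u_k$. It gives $(1-x)f_k(u_k+x)\le f_{k-1}(u_k)$.
\item The last term of the sum is exactly $d\,f_{k-1}(u_k)$. Combining it with the previous bound gives
\[
(1-x)f_k(u_k+x)-d\,f_{k-1}(u_k)\le (1-d)f_{k-1}(u_k)=(1-d)f_{k-1}(u_{k-1}+d).
\]
\item Hence $\Phi_k(u_1,\ldots,u_k;x)\le \Phi_{k-1}(u_1,\ldots,u_{k-1};d)$.
\item The shortened sequence is admissible for $\Phi_{k-1}$ with new parameter $x'=d$. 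Indeed $d\in[0,1]$, and $u_{k-1}\le 1-d$ because $u_{k-1}+d=u_k\le 1-x\le 1$.
\item The inductive hypothesis then gives $\Phi_{k-1}\le 1-\GammaFunc{K}$.
\end{itemize}
The equivalent second form of the lemma follows by rearranging.

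\emph{Main obstacle.} The delicate point is choosing the right comparison in the inductive step. Trying to compare $(1-x)f_k(u_k+x)$ with $f_{k-1}$ evaluated at $u_k+x$ leads nowhere. The trick is to evaluate the recursion at $a=u_k$ using $y=x$, so that the subtracted term $d\,f_{k-1}(u_k)$ combines multiplicatively into $(1-d)f_{k-1}(u_k)$, which has exactly the stage-$(k-1)$ form.

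The remaining checks are routine. The recursion is only ever invoked with index $k-1\le K-2$, so it relates $f_{k-1}$ to $f_k$ and never needs the degenerate $f_K$. The step also needs the max-characterization of the $f_k$ rather than their closed form, and that is exactly what the definition in \Propref{prop:batch-polynomials} provides.
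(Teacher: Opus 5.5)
Your proposal is correct and follows essentially the same route as the paper's proof: induction on $k$, with the base case coming from the maximizing identity $\max_{y}(1-y)f_1(y)=1-\GammaFunc{K}$. The inductive step likewise evaluates the recursion for $f_{k-1}$ at $u_k$ with the feasible choice $y=x$, absorbs the last sum term into $(1-d)f_{k-1}(u_k)$, and applies the hypothesis to the shortened sequence with $x'=u_k-u_{k-1}$.
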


\begin{proof}{Proof.}
We argue by induction on \(k\).

For \(k=1\), the claim reduces to
\[
    (1-x)f_1(x)\le 1-\GammaFunc{K},
\]
which is exactly the maximizing identity proved above with \(y=x\).

Assume now that the statement holds for \(k-1\), and consider stage \(k\). By the recursive definition of the
polynomials,
\[
    f_{k-1}(u_k) = \max_{y\in[0,1-u_k]} (1-y)f_k(u_k+y).
\]
Choosing \(y=x\) is feasible because \(u_k\le 1-x\), hence
\[
    (1-x)f_k(u_k+x) \le f_{k-1}(u_k).
\]
Therefore
\begin{align*}
    (1-x)f_k(u_k+x) - \sum_{s=1}^{k-1} (u_{s+1}-u_s)f_s(u_{s+1})
    &\le f_{k-1}(u_k) - \sum_{s=1}^{k-1} (u_{s+1}-u_s)f_s(u_{s+1}) \\
    &= \bigl(1-u_k+u_{k-1}\bigr)f_{k-1}(u_k)
       - \sum_{s=1}^{k-2} (u_{s+1}-u_s)f_s(u_{s+1}).
\end{align*}
Now apply the induction hypothesis to the shorter sequence
\[
    0=u_1\le u_2\le \cdots \le u_{k-1}
\]
with the choice \(x' := u_k-u_{k-1}\). Since \(u_k\le 1\), we have \(u_{k-1}\le 1-x'\), so the induction
hypothesis yields the desired bound.
\qed
\end{proof}

\subsection{Proof of \Propref{prop:batch-structure}}

\begin{proof}{Proof of \Propref{prop:batch-structure}.}
Fix a stage \(k\le K-1\), and abbreviate the optimizer of \Eqref{eq:batch-stage-program} by \(x=\{x_{tj}\}\).
Introduce Lagrange multipliers \(\lambda_t\ge 0\) for the online row constraints,
\(\theta_j\ge 0\) for the offline capacity constraints, and \(\gamma_{tj}\ge 0\) for nonnegativity. The KKT system is
\begin{equation}
    w_j\Bigl(1-f_k\bigl(y_j^{(k)}+\sum_{t:(t,j)\in E_k}x_{tj}\bigr)\Bigr)
    - \lambda_t - \theta_j + \gamma_{tj} = 0
    \qquad \forall (t,j)\in E_k,
    \label{eq:ec-batch-kkt}
\end{equation}
together with complementary slackness:
\begin{align}
    x_{tj}>0 &\implies \gamma_{tj}=0,
    \label{eq:ec-batch-cs-gamma}\\
    y_j^{(k)}+\sum_{t:(t,j)\in E_k}x_{tj}<1 &\implies \theta_j=0,
    \label{eq:ec-batch-cs-theta}\\
    \sum_{j:(t,j)\in E_k}x_{tj}<1 &\implies \lambda_t=0.
    \label{eq:ec-batch-cs-lambda}
\end{align}

Let \(G_k'\) be the positive-support graph formed by the edges with \(x_{tj}>0\). Define
\[
    \mathcal V_{k,0}:=\left\{j\in V: y_j^{(k)}+\sum_{t:(t,j)\in E_k}x_{tj}=1\right\}
\]
and let \(\mathcal U_{k,0}\) be the set of neighbors of \(\mathcal V_{k,0}\) inside \(G_k'\). Remove these vertices and let
\((\mathcal U_{k,\ell},\mathcal V_{k,\ell})\), \(\ell=1,\ldots,L_k\), be the connected components of the remaining support
subgraph.

\paragraph{Uniformity.}
If \(j\in\mathcal V_{k,0}\), then its post-stage load is one, so
\[
    w_j\bigl(1-f_k(y_j^{(k+1)})\bigr)=w_j(1-f_k(1))=0.
\]
Set \(c_{k,0}:=0\).

Now fix \(\ell\ge 1\) and two offline vertices \(j,j'\in\mathcal V_{k,\ell}\) that share an online neighbor
\(t\in\mathcal U_{k,\ell}\) in the positive-support graph. Since \(x_{tj}>0\) and \(x_{tj'}>0\), we have
\(\gamma_{tj}=\gamma_{tj'}=0\) by \Eqref{eq:ec-batch-cs-gamma}. Also, because
\(j,j'\notin\mathcal V_{k,0}\), both are unsaturated and \Eqref{eq:ec-batch-cs-theta} gives
\(\theta_j=\theta_{j'}=0\). Applying \Eqref{eq:ec-batch-kkt} to \((t,j)\) and \((t,j')\) yields
\[
    w_j\bigl(1-f_k(y_j^{(k+1)})\bigr)
    = \lambda_t
    = w_{j'}\bigl(1-f_k(y_{j'}^{(k+1)})\bigr).
\]
Since any two vertices in the same connected component can be joined by a path, the quantity above is constant on
\(\mathcal V_{k,\ell}\); call the common value \(c_{k,\ell}\).

\paragraph{Saturation.}
Fix \(\ell\ge 1\) and \(t\in\mathcal U_{k,\ell}\). Choose a neighbor \(j\in\mathcal V_{k,\ell}\) with \(x_{tj}>0\).
Because \(j\notin\mathcal V_{k,0}\), we have \(y_j^{(k+1)}<1\), and thus
\(f_k(y_j^{(k+1)})<1\). Hence
\[
    \lambda_t = w_j\bigl(1-f_k(y_j^{(k+1)})\bigr)=c_{k,\ell}>0.
\]
Complementary slackness \Eqref{eq:ec-batch-cs-lambda} therefore implies
\(\sum_{j:(t,j)\in E_k}x_{tj}=1\).

\paragraph{Monotonicity.}
Suppose there were an edge \((t,j')\in E_k\) with \(t\in\mathcal U_{k,\ell}\) and
\(j'\in\mathcal V_{k,\ell'}\). Choose \(j\in\mathcal V_{k,\ell}\) such that \(x_{tj}>0\); then
\(\gamma_{tj}=0\). Applying \Eqref{eq:ec-batch-kkt} to \((t,j)\) and \((t,j')\),
\[
    c_{k,\ell} - \lambda_t - \theta_j = 0,
    \qquad
    c_{k,\ell'} - \lambda_t - \theta_{j'} + \gamma_{tj'} = 0.
\]
Thus
\[
    c_{k,\ell'} = \lambda_t + \theta_{j'} - \gamma_{tj'} \le \lambda_t \le c_{k,\ell}
\]
whenever \(\ell'\ge 1\), because then \(\theta_{j'}=0\). If \(\ell'=0\), then \(c_{k,0}=0\) anyway. Consequently,
whenever \(c_{k,\ell}<c_{k,\ell'}\), no edge can join \(\mathcal U_{k,\ell}\) to \(\mathcal V_{k,\ell'}\).\qed
\end{proof}

\subsection{Proof of \Thmref{thm:batch-main}}

\begin{proof}{Proof of \Thmref{thm:batch-main}.}
Let \(x^{(k)}\) be the optimizer of the stage-\(k\) convex program for each \(k=1,\ldots,K\), and let
\((\mathcal U_{k,\ell},\mathcal V_{k,\ell})\) and \(c_{k,\ell}\) be the decomposition from
\Propref{prop:batch-structure} for every \(k\le K-1\). Consider the weighted offline dual
\begin{equation}
\begin{aligned}
    \min \quad & \sum_{t\in U} \alpha_t + \sum_{j\in V} \beta_j \\
    \text{s.t.}\quad
    & \alpha_t + \beta_j \ge w_j && \forall (t,j)\in E, \\
    & \alpha_t,\beta_j\ge 0.
\end{aligned}
\label{eq:ec-batch-offline-dual}
\end{equation}
We construct a feasible scaled dual solution.

For stages \(k\le K-1\), set
\[
    \widehat\alpha_t := c_{k,\ell}
    \qquad \text{for every } t\in \mathcal U_{k,\ell},
\]
and define the stage-wise increment of \(\widehat\beta_j\) by
\begin{equation}
    \Delta \widehat\beta_j^{(k)}
    := \bigl(y_j^{(k+1)}-y_j^{(k)}\bigr)\bigl(w_j-c_{k,\ell}\bigr)
    = w_j\bigl(y_j^{(k+1)}-y_j^{(k)}\bigr)f_k(y_j^{(k+1)})
    \qquad \text{for } j\in \mathcal V_{k,\ell}.
    \label{eq:ec-batch-beta-increment}
\end{equation}
At the last stage \(K\), the regularizer vanishes, so the stage problem is just the residual maximum-weight
fractional matching problem with capacities \(1-y_j^{(K)}\). Let
\((\bar\alpha_t)_{t\in U_K},(\bar\beta_j)_{j\in V}\) be an optimal dual solution of that residual LP, i.e.,
\[
    \bar\alpha_t + \bar\beta_j \ge w_j
    \qquad \forall (t,j)\in E_K,
\]
and
\[
    \sum_{t\in U_K}\bar\alpha_t + \sum_{j\in V}(1-y_j^{(K)})\bar\beta_j
    = \sum_{(t,j)\in E_K} w_j x_{tj}^{(K)}.
\]
Set
\[
    \widehat\alpha_t := \bar\alpha_t \quad (t\in U_K),
    \qquad
    \Delta \widehat\beta_j^{(K)} := (1-y_j^{(K)})\bar\beta_j,
\]
and define \(\widehat\beta_j:=\sum_{k=1}^K \Delta\widehat\beta_j^{(k)}\).

\paragraph{Primal and dual objective increments agree.}
Fix a stage \(k\le K-1\). Since only edges inside the support graph contribute, it is enough to compare the primal
and dual contributions componentwise.

If \(\ell=0\), then \(c_{k,0}=0\), and hence
\[
    \sum_{t\in\mathcal U_{k,0}} \widehat\alpha_t
    + \sum_{j\in\mathcal V_{k,0}} \Delta\widehat\beta_j^{(k)}
    = \sum_{j\in\mathcal V_{k,0}} \sum_{t:(t,j)\in E_k} w_j x_{tj}^{(k)},
\]
which is exactly the primal reward collected on that block.

If \(\ell\ge 1\), then by saturation from \Propref{prop:batch-structure},
\(\sum_{j:(t,j)\in E_k}x_{tj}^{(k)}=1\) for every \(t\in\mathcal U_{k,\ell}\). Therefore
\begin{align*}
    \sum_{t\in\mathcal U_{k,\ell}}\widehat\alpha_t
    + \sum_{j\in\mathcal V_{k,\ell}} \Delta\widehat\beta_j^{(k)}
    &= |\mathcal U_{k,\ell}|\, c_{k,\ell}
       + \sum_{j\in\mathcal V_{k,\ell}}
         \bigl(y_j^{(k+1)}-y_j^{(k)}\bigr)\bigl(w_j-c_{k,\ell}\bigr) \\
    &= \sum_{j\in\mathcal V_{k,\ell}} w_j\bigl(y_j^{(k+1)}-y_j^{(k)}\bigr)
       + c_{k,\ell}
         \left(|\mathcal U_{k,\ell}| - \sum_{j\in\mathcal V_{k,\ell}}(y_j^{(k+1)}-y_j^{(k)})\right) \\
    &= \sum_{j\in\mathcal V_{k,\ell}} w_j\bigl(y_j^{(k+1)}-y_j^{(k)}\bigr).
\end{align*}
The last equality uses conservation of mass inside the component: every edge carrying positive mass from a node in $\mathcal U_{k,\ell}$ ends in $\mathcal V_{k,\ell}$, and every node in $\mathcal U_{k,\ell}$ is fully matched. Therefore
\[
    \sum_{j\in\mathcal V_{k,\ell}}\bigl(y_j^{(k+1)}-y_j^{(k)}\bigr)
    = \sum_{t\in\mathcal U_{k,\ell}}\sum_{j\in\mathcal V_{k,\ell}} x_{tj}^{(k)}
    = |\mathcal U_{k,\ell}|.
\]
So the displayed quantity is exactly the primal reward collected on that block. At stage \(K\), equality of the primal and dual
objectives follows from the optimality of the residual dual solution \((\bar\alpha,\bar\beta)\). Summing over all
stages gives
\[
    \sum_{t\in U} \widehat\alpha_t + \sum_{j\in V}\widehat\beta_j = \ALG.
\]

\paragraph{Approximate dual feasibility.}
We show that every edge satisfies
\[
    \widehat\alpha_t + \widehat\beta_j \ge \GammaFunc{K} \, w_j.
\]

Fix an edge \((t,j)\in E_k\) with \(k\le K-1\). Suppose
\(t\in\mathcal U_{k,\ell}\) and \(j\in\mathcal V_{k,\ell'}\). By monotonicity,
\(c_{k,\ell'}\le c_{k,\ell}=\widehat\alpha_t\). Hence, using \Eqref{eq:ec-batch-beta-increment},
\begin{align*}
    \widehat\alpha_t + \sum_{s=1}^k \Delta\widehat\beta_j^{(s)}
    &\ge c_{k,\ell'} + \Delta\widehat\beta_j^{(k)} + \sum_{s=1}^{k-1} \Delta\widehat\beta_j^{(s)} \\
    &= w_j\bigl(1-f_k(y_j^{(k+1)})\bigr)
       + w_j\bigl(y_j^{(k+1)}-y_j^{(k)}\bigr)f_k(y_j^{(k+1)})
       + \sum_{s=1}^{k-1} \Delta\widehat\beta_j^{(s)} \\
    &= w_j\left[
        1 - \bigl(1-y_j^{(k+1)}+y_j^{(k)}\bigr)f_k(y_j^{(k+1)})
        + \sum_{s=1}^{k-1} \bigl(y_j^{(s+1)}-y_j^{(s)}\bigr)f_s(y_j^{(s+1)})
       \right].
\end{align*}
Apply \Lemref{lem:batch-recursion} with
\[
    x := y_j^{(k+1)}-y_j^{(k)},
    \qquad
    u_s := y_j^{(s)} \quad (s=1,\ldots,k),
\]
and note that \(u_k=y_j^{(k)}\) and \(u_k+x=y_j^{(k+1)}\). This yields
\[
    \widehat\alpha_t + \sum_{s=1}^k \Delta\widehat\beta_j^{(s)} \ge \GammaFunc{K} w_j.
\]
Since the remaining increments of \(\widehat\beta_j\) are nonnegative, the full dual constraint is satisfied.

Now consider an edge \((t,j)\in E_K\) from the last stage. By construction,
\[
    \widehat\alpha_t + \Delta\widehat\beta_j^{(K)}
    = \bar\alpha_t + (1-y_j^{(K)})\bar\beta_j
    \ge (1-y_j^{(K)}) w_j.
\]
If \(y_j^{(K)}=0\), then this is already at least \(\GammaFunc{K}w_j\). Otherwise, let \(k'<K\) be the last stage with
positive increment on node \(j\), so \(y_j^{(K)}=y_j^{(k'+1)}\). Since
\(f_{k'}(y)=\left(1-\frac{1-y}{K-k'}\right)^{K-k'}\ge y\) for every \(y\in[0,1]\) by Bernoulli's inequality,
\[
    (1-y_j^{(K)})w_j \ge \bigl(1-f_{k'}(y_j^{(k'+1)})\bigr)w_j.
\]
Therefore
\begin{align*}
    \widehat\alpha_t + \widehat\beta_j
    &\ge (1-y_j^{(K)})w_j + \sum_{s=1}^{k'} \Delta\widehat\beta_j^{(s)} \\
    &\ge \bigl(1-f_{k'}(y_j^{(k'+1)})\bigr)w_j + \sum_{s=1}^{k'} \Delta\widehat\beta_j^{(s)} \\
    &= w_j\left[
       1 - \bigl(1-y_j^{(k'+1)}+y_j^{(k')}\bigr)f_{k'}(y_j^{(k'+1)})
       + \sum_{s=1}^{k'-1} \bigl(y_j^{(s+1)}-y_j^{(s)}\bigr)f_s(y_j^{(s+1)})
      \right] \\
    &\ge \GammaFunc{K}w_j
\end{align*}
by one more application of \Lemref{lem:batch-recursion}. This proves approximate dual feasibility. Weak duality
now gives \(\ALG\ge \GammaFunc{K}\OPT\).\qed
\end{proof}

\section{Supplementary proofs for configuration allocation}
\label{sec:ec-config}

Throughout this section, write
\[
    \Delta w_\tau := \PriceLevel{\tau} - \PriceLevel{\tau-1}
\]
and, for each stage \(k\), let
\[
    H_{j,\tau}^{(k)} := H_{j,\tau}(x^{(k)},y^{(k)})
\]
for the cumulative price-level state of advertiser \(j\) at threshold \(\PriceLevel{\tau}\). We also set
\(H_{j,\tau}^{(0)}:=0\).

\subsection{Proofs of the structural lemmas}

We begin with the offline benchmark. The K-stage fractional configuration-allocation LP is
\begin{equation}
\begin{aligned}
    \max \quad & \sum_{t\in U}\sum_{c\in\mathcal C}\sum_{j\in V} w_{t,c,j} x_{t,c,j} \\
    \text{s.t.}\quad
    & \sum_{t\in U}\sum_{c\in\mathcal C} x_{t,c,j} \le 1 && \forall j\in V, \\
    & \sum_{c\in\mathcal C} z_{t,c} \le 1 && \forall t\in U, \\
    & x_{t,c,j} \le \xi_{t,c,j} z_{t,c} && \forall t,c,j, \\
    & x_{t,c,j}, z_{t,c} \ge 0.
\end{aligned}
\label{eq:offline-config-lp}
\end{equation}
Its dual is
\begin{equation}
\begin{aligned}
    \min \quad & \sum_{t\in U} \alpha_t + \sum_{j\in V} \beta_j \\
    \text{s.t.}\quad
    & \gamma_{t,c,j} + \beta_j \ge w_{t,c,j} && \forall t,c,j, \\
    & \alpha_t \ge \sum_{j\in V} \xi_{t,c,j}\gamma_{t,c,j} && \forall t,c, \\
    & \alpha_t,\beta_j,\gamma_{t,c,j}\ge 0.
\end{aligned}
\label{eq:offline-config-dual}
\end{equation}

For the stage-wise analysis, fix a stage \(k\le K-1\), and let \((x^*,y^*,z^*)\) be the optimizer of
\Eqref{eq:config-stage-program}--\Eqref{eq:config-stage-carry}. Introduce Lagrange multipliers
\(\lambda_t\ge 0\) for the configuration-choice constraints,
\(\theta_j\ge 0\) for the advertiser-capacity constraints,
\(\mu_{t,c,j}\ge 0\) for the link constraints,
\(\pi_{j,\tau}\ge 0\) for the carry constraints,
\(\psi_{t,c,j}\ge 0\) for nonnegativity of \(x_{t,c,j}\),
\(\phi_{t,c}\ge 0\) for nonnegativity of \(z_{t,c}\), and
\(\iota_{j,\tau}\ge 0\) for nonnegativity of \(y_{j,\tau}\).

\begin{proof}{Proof of \Lemref{lem:config-kkt-main}.}
The KKT system consists of primal feasibility, dual feasibility, complementary slackness, and stationarity. The complementary-slackness relations needed later are
\begin{align}
    \lambda_t\Bigl(\sum_{c\in\mathcal C} z_{t,c}^*-1\Bigr) &= 0,
    \label{eq:ec-config-cs-lambda}\\
    \mu_{t,c,j}\bigl(x_{t,c,j}^*-\xi_{t,c,j}z_{t,c}^*\bigr) &= 0,
    \label{eq:ec-config-cs-mu}\\
    \pi_{j,\tau}\bigl(y_{j,\tau}^*-\eta_{j,\tau}^{(k-1)}\bigr) &= 0,
    \qquad
    \iota_{j,\tau}y_{j,\tau}^* = 0,
    \label{eq:ec-config-cs-pi-iota}\\
    \psi_{t,c,j}x_{t,c,j}^* &= 0,
    \qquad
    \phi_{t,c}z_{t,c}^*=0.
    \label{eq:ec-config-cs-psi-phi}
\end{align}
The stationarity relations with respect to \(z_{t,c}\) and \(x_{t,c,j}\) are
\begin{align}
    \sum_{j\in V}\xi_{t,c,j}\mu_{t,c,j} - \lambda_t + \phi_{t,c} &= 0,
    \label{eq:ec-config-stationarity-z}\\
    w_{t,c,j}
    - \sum_{\tau:\,\PriceLevel{\tau}\le w_{t,c,j}} \Delta w_\tau f_k(H_{j,\tau}^{(k)})
    - \theta_j - \mu_{t,c,j} + \psi_{t,c,j} &= 0.
    \label{eq:ec-config-stationarity-x}
\end{align}
Similarly, stationarity with respect to \(y_{j,\tau}\) gives
\begin{equation}
    \PriceLevel{\tau}
    - \sum_{\tau'\le \tau} \Delta w_{\tau'} f_k(H_{j,\tau'}^{(k)})
    - \theta_j - \pi_{j,\tau} + \iota_{j,\tau} = 0.
    \label{eq:ec-config-stationarity-y}
\end{equation}
It remains to show that \(\theta_j=0\) for every advertiser \(j\) and stage \(k\le K-1\). Suppose instead that \(\theta_j>0\). Then complementary slackness for the capacity constraint forces advertiser \(j\) to be exactly full after stage \(k\). Choose the smallest price index \(\tau^\star\) that appears with positive retained mass after the stage, in the following sense: either \(y_{j,\tau^\star}^*>0\), or there exists some \((t,c)\) with \(x_{t,c,j}^*>0\) and \(w_{t,c,j}=\PriceLevel{\tau^\star}\). By minimality of \(\tau^\star\), every retained unit of mass for advertiser \(j\) has price at least \(\PriceLevel{\tau^\star}\). Since the total retained mass equals one, we must therefore have \(H_{j,\tau'}^{(k)}=1\) for every \(\tau'\le \tau^\star\). Because \(f_k(1)=1\),
\[
    \sum_{\tau'\le \tau^\star} \Delta w_{\tau'} f_k(H_{j,\tau'}^{(k)})
    = \sum_{\tau'\le \tau^\star} \Delta w_{\tau'}
    = \PriceLevel{\tau^\star}.
\]
We now distinguish two cases.

If \(y_{j,\tau^\star}^*>0\), then \(\iota_{j,\tau^\star}=0\) by \eqref{eq:ec-config-cs-pi-iota}. Plugging the previous display into \Eqref{eq:ec-config-stationarity-y} with \(\tau=\tau^\star\) gives
\[
    0 = \PriceLevel{\tau^\star}-\sum_{\tau'\le \tau^\star}\Delta w_{\tau'}f_k(H_{j,\tau'}^{(k)})-\theta_j-\pi_{j,\tau^\star}
      = -\theta_j-\pi_{j,\tau^\star},
\]
a contradiction.

If instead \(y_{j,\tau^\star}^*=0\), then by definition of \(\tau^\star\) there exists some pair \((t,c)\) with \(x_{t,c,j}^*>0\) and \(w_{t,c,j}=\PriceLevel{\tau^\star}\). In that case \(\psi_{t,c,j}=0\) by \Eqref{eq:ec-config-cs-psi-phi}, and \Eqref{eq:ec-config-stationarity-x} yields
\[
    0 = \PriceLevel{\tau^\star}-\sum_{\tau'\le \tau^\star}\Delta w_{\tau'}f_k(H_{j,\tau'}^{(k)})-\theta_j-\mu_{t,c,j}
      = -\theta_j-\mu_{t,c,j},
\]
again a contradiction.

Thus \(\theta_j=0\) for every advertiser \(j\) and every stage \(k\le K-1\). Substituting this fact into \Eqref{eq:ec-config-stationarity-z} and \Eqref{eq:ec-config-stationarity-x} yields exactly the identities stated in \Lemref{lem:config-kkt-main}.\qed
\end{proof}

\begin{proof}{Proof of \Lemref{lem:config-dispose}.}
Assume that \(\ConfigState{j}{\tau}{k-1}-y_{j,\tau}^*>0\). Then the carry constraint at \((j,\tau)\) is slack, so
\(\pi_{j,\tau}=0\) by complementary slackness. Since we already know that \(\theta_j=0\),
\Eqref{eq:ec-config-stationarity-y} gives
\[
    \PriceLevel{\tau}
    - \sum_{\tau'\le \tau} \Delta w_{\tau'} f_k(H_{j,\tau'}^{(k)})
    + \iota_{j,\tau} = 0.
\]
Hence
\[
    \sum_{\tau'\le \tau} \Delta w_{\tau'} f_k(H_{j,\tau'}^{(k)}) \ge \PriceLevel{\tau}
    = \sum_{\tau'\le \tau} \Delta w_{\tau'}.
\]
Because each term \(f_k(H_{j,\tau'}^{(k)})\) lies in \([0,1]\), equality can hold only if
\(f_k(H_{j,\tau'}^{(k)})=1\) for every \(\tau'\le \tau\). The polynomial \(f_k\) is increasing and attains the value
one only at the point one. Thus \(H_{j,\tau}^{(k)}=1\), as claimed.\qed
\end{proof}

\begin{proof}{Proof of \Corref{cor:config-monotone}.}
Fix \((j,\tau)\). If
\[
    \sum_{\tau'\ge \tau} \bigl(\eta_{j,\tau'}^{(k-1)}-y_{j,\tau'}^*\bigr) = 0,
\]
then no previously retained mass at levels \(\tau,\tau+1,\ldots,T\) is discarded, so
\[
    H_{j,\tau}^{(k)}
    = \sum_{\substack{t\in U_k,\,c\in\mathcal C:\ w_{t,c,j}\ge \PriceLevel{\tau}}} x_{t,c,j}^*
      + \sum_{\tau'\ge \tau} y_{j,\tau'}^*
    \ge \sum_{\tau'\ge \tau} \eta_{j,\tau'}^{(k-1)}.
\]
This proves part (i), and part (ii) is immediate because both sides are zero.

Otherwise,
\[
    \sum_{\tau'\ge \tau} \bigl(\eta_{j,\tau'}^{(k-1)}-y_{j,\tau'}^*\bigr) > 0.
\]
Then there exists some \(\tau'\ge \tau\) with \(\eta_{j,\tau'}^{(k-1)}-y_{j,\tau'}^*>0\). By
\Lemref{lem:config-dispose}, \(H_{j,\tau'}^{(k)}=1\). Since cumulative states are nonincreasing in the threshold,
\(H_{j,\tau}^{(k)}\ge H_{j,\tau'}^{(k)}=1\), and hence
\[
    H_{j,\tau}^{(k)}=1\ge \sum_{\tau'\ge \tau} \eta_{j,\tau'}^{(k-1)},
\]
which again proves part (i). In this case \(f_k(H_{j,\tau}^{(k)})=f_k(1)=1\), so part (ii) follows immediately as
well.\qed
\end{proof}

For the last stage we need one additional observation about the optimal LP dual.

\begin{lemma}
\label{lem:config-last-stage}
Fix an optimal extreme-point dual solution of the stage-\(K\) linear program. Without loss of generality,
\(\bar\theta_j\in\{\PriceLevel{0},\ldots,\PriceLevel{T}\}\) for every advertiser \(j\). Let
\(\bar\theta_j = \PriceLevel{\tau_j^\star}\). Then, for every \(\tau\in\PriceUniverse\),
\begin{align}
    \tau \le \tau_j^\star &\implies \eta_{j,\tau}^{(K-1)}\bar\pi_{j,\tau} = 0,
    \label{eq:ec-config-laststage-low}\\
    \tau > \tau_j^\star &\implies
    \eta_{j,\tau}^{(K-1)}\bigl(\PriceLevel{\tau}-\bar\pi_{j,\tau}\bigr)
    = \eta_{j,\tau}^{(K-1)}\bar\theta_j.
    \label{eq:ec-config-laststage-high}
\end{align}
\end{lemma}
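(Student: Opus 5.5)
The plan is to write out the Lagrangian dual of the stage-$K$ linear program explicitly. Since $F_K\equiv 0$, this dual is a linear program in the multipliers introduced before the proof of \Lemref{lem:config-kkt-main}: $\bar\lambda_t$ (choice), $\bar\theta_j$ (capacity), $\bar\mu_{t,c,j}$ (link) and $\bar\pi_{j,\tau}$ (carry). The idea is that, once $\bar\theta_j$ is fixed, optimality pins down $\bar\pi_{j,\tau}$ in closed form, and both claimed identities become statements about $\max\{0,\PriceLevel{\tau}-\bar\theta_j\}$.

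\textbf{Step 1 (dual constraints and objective).} Stationarity \Eqref{eq:ec-config-stationarity-y} with $f_K\equiv 0$ reads $\PriceLevel{\tau}-\bar\theta_j-\bar\pi_{j,\tau}+\bar\iota_{j,\tau}=0$ with $\bar\iota_{j,\tau}\ge 0$. Equivalently, dual feasibility for the column $y_{j,\tau}$ is
\[
\bar\pi_{j,\tau}\ge \PriceLevel{\tau}-\bar\theta_j,\qquad \bar\pi_{j,\tau}\ge 0 .
\]
Similarly, the $x_{t,c,j}$ column gives $\bar\mu_{t,c,j}\ge w_{t,c,j}-\bar\theta_j$ and $\bar\mu_{t,c,j}\ge 0$. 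The dual objective, up to the constant $-\sum_{j,\tau}\PriceLevel{\tau}\eta_{j,\tau}^{(K-1)}$, is
\[
\sum_t\bar\lambda_t+\sum_j\bar\theta_j+\sum_{j,\tau}\eta_{j,\tau}^{(K-1)}\bar\pi_{j,\tau}.
\]

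\textbf{Step 2 (closed form for $\bar\pi$).} In this objective $\bar\pi_{j,\tau}$ appears only with nonnegative coefficient $\eta_{j,\tau}^{(K-1)}$, and only in its own constraint. Hence at any optimum, wherever $\eta_{j,\tau}^{(K-1)}>0$ we must have $\bar\pi_{j,\tau}=\max\{0,\PriceLevel{\tau}-\bar\theta_j\}$; otherwise decreasing $\bar\pi_{j,\tau}$ would strictly improve the objective. Where $\eta_{j,\tau}^{(K-1)}=0$, both \eqref{eq:ec-config-laststage-low} and \eqref{eq:ec-config-laststage-high} hold trivially because of the factor $\eta_{j,\tau}^{(K-1)}$. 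Now substitute $\bar\theta_j=\PriceLevel{\tau_j^\star}$.
\begin{itemize}
\item If $\tau\le\tau_j^\star$, then $\PriceLevel{\tau}\le\bar\theta_j$, so $\bar\pi_{j,\tau}=0$, which gives \eqref{eq:ec-config-laststage-low}.
\item If $\tau>\tau_j^\star$, then $\bar\pi_{j,\tau}=\PriceLevel{\tau}-\bar\theta_j$, i.e.\ $\PriceLevel{\tau}-\bar\pi_{j,\tau}=\bar\theta_j$, which gives \eqref{eq:ec-config-laststage-high}.
\end{itemize}

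\textbf{Step 3 (the ``without loss of generality'').} This is the main obstacle: showing that some optimal dual has every $\bar\theta_j$ in the price universe. Fix all multipliers other than those attached to advertiser $j$, and replace $\bar\mu_{t,c,j}$ and $\bar\pi_{j,\tau}$ by their minimal feasible values $\max\{0,w_{t,c,j}-\bar\theta_j\}$ and $\max\{0,\PriceLevel{\tau}-\bar\theta_j\}$.

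For the $\bar\mu$ replacement to be harmless, the coupling constraint $\bar\lambda_t\ge\sum_j\xi_{t,c,j}\bar\mu_{t,c,j}$ must stay satisfied. It does, because lowering $\bar\mu_{t,c,j}$ only relaxes that constraint.

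After this substitution, the dual objective and all constraints are piecewise linear in the single variable $\bar\theta_j\ge 0$. The only breakpoints are the values $w_{t,c,j}$ and $\PriceLevel{\tau}$, all of which lie in the price universe $\{\PriceLevel{0},\dots,\PriceLevel{T}\}$ by the finite-price-universe assumption. The resulting objective is convex piecewise linear, since it is a sum of maxima of affine functions plus the linear term $\bar\theta_j$. It therefore attains its minimum over $[0,\infty)$ at a breakpoint or at $0=\PriceLevel{0}$. So $\bar\theta_j$ can be moved to a price level without losing optimality, and doing this advertiser by advertiser yields an optimal dual with all $\bar\theta_j$ in the price set. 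An extreme-point dual has the same property, since vertices of this polyhedron occur at such breakpoints; this justifies the phrase ``extreme-point'' in the statement.

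Finally, Step 2 is applied to this modified optimal dual. Since the dual is still optimal, the closed form for $\bar\pi_{j,\tau}$ is forced wherever $\eta_{j,\tau}^{(K-1)}>0$, and the two identities follow.
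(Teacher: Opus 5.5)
Your Steps 1 and 2 are the paper's proof. The paper writes the constraints $\bar\theta_j+\bar\pi_{j,\tau}\ge\PriceLevel{\tau}$, $\bar\pi_{j,\tau}\ge 0$, notes that $\bar\pi_{j,\tau}$ enters the objective only through $\eta^{(K-1)}_{j,\tau}\bar\pi_{j,\tau}$, and reads off both identities. It asserts the ``without loss of generality'' without proof.

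Your Step 3, which tries to supply that justification, has a genuine gap. Once $\bar\lambda_t$ is frozen, the choice constraints $\bar\lambda_t\ge\sum_{j'}\xi_{t,c,j'}\max\{0,w_{t,c,j'}-\bar\theta_{j'}\}$ do more than add breakpoints at prices. Their right-hand side grows as $\bar\theta_j$ decreases, so they confine $\bar\theta_j$ to a half-line $[L_j,\infty)$. The endpoint $L_j$ is set by the real number $\bar\lambda_t$ and is generally not a price. Your univariate objective $\bar\theta_j+\sum_\tau\eta^{(K-1)}_{j,\tau}\max\{0,\PriceLevel{\tau}-\bar\theta_j\}$ has right slope $1-\sum_{\tau:\PriceLevel{\tau}>\bar\theta_j}\eta^{(K-1)}_{j,\tau}$. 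This slope is positive whenever $j$ retains less than one unit above $\bar\theta_j$, so the minimum over the feasible half-line can sit exactly at $L_j$. Going below $L_j$ requires raising $\bar\lambda_t$, which changes the objective and the other advertisers' constraints. So the problem does not decouple advertiser by advertiser, and your closing sentence about extreme points has no support.

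In fact no repair is possible, because the claim is false. Take one stage-$K$ user $t$ with two single-advertiser configurations: $c_1$ pays $w=5$ to $j_1$ and $c_2$ pays $w=4$ to $j_2$, with all $\xi=1$. Let the state be $\eta^{(K-1)}_{j_1}=\tfrac12$ at price $3$ and $\eta^{(K-1)}_{j_2}=1$ at price $1$, with price universe $\{0,1,3,4,5\}$. Write $y_j,\bar\pi_j$ for the single level at which $j$ holds old mass. This state is reached with $K=2$: in stage $1$, one user offered $j_1$ or a third advertiser, both at price $3$, splits equally under $F_1(h)=h^2/2$, and another user fills $j_2$ at price $1$. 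The primal point $x_{t,c_1,j_1}=x_{t,c_2,j_2}=z_{t,c_1}=z_{t,c_2}=y_{j_1}=y_{j_2}=\tfrac12$ has value $4$. Complementary slackness with it forces the following:
\begin{itemize}
\item $\bar\pi_{j_2}=0$, because the carry constraint at $j_2$ is slack;
\item hence $\bar\theta_{j_2}=1$;
\item then $\bar\mu_{t,c_2,j_2}=\bar\lambda_t=\bar\mu_{t,c_1,j_1}=3$;
\item hence $\bar\theta_{j_1}=2$ and $\bar\pi_{j_1}=1$.
\end{itemize}
This dual is feasible with value $3+2+1+\tfrac12-\tfrac52=4$, so both solutions are optimal. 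Every optimal dual must satisfy complementary slackness with this primal, so this dual is the unique optimum. Yet $\bar\theta_{j_1}=2$ is not a price.

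What survives is the content of the two displays. For any optimal dual, set $\tau_j^\star:=\max\{\tau:\PriceLevel{\tau}\le\bar\theta_j\}$. Then $\PriceLevel{\tau}\le\bar\theta_j$ if and only if $\tau\le\tau_j^\star$, and your Step 2 proves both identities verbatim. Drop Step 3 and state the lemma in this form. Note that the equality $\bar\theta_j=\PriceLevel{\tau_j^\star}$ is then not available to later arguments.
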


\begin{proof}{Proof.}
At stage \(K\), the problem is linear, and the dual constraints involving \((j,\tau)\) are
\[
    \bar\theta_j + \bar\pi_{j,\tau} \ge \PriceLevel{\tau},
    \qquad \bar\pi_{j,\tau}\ge 0.
\]
Because the dual objective contains the term \(\eta_{j,\tau}^{(K-1)}\bar\pi_{j,\tau}\), optimality sets
\(\bar\pi_{j,\tau}\) to its smallest feasible value whenever \(\eta_{j,\tau}^{(K-1)}>0\). If
\(\tau\le \tau_j^\star\), that minimum is zero, giving \Eqref{eq:ec-config-laststage-low}. If
\(\tau>\tau_j^\star\), that minimum is \(\PriceLevel{\tau}-\bar\theta_j\), giving
\Eqref{eq:ec-config-laststage-high}. If \(\eta_{j,\tau}^{(K-1)}=0\), both displays are trivial.
\qed
\end{proof}

\subsection{Proof of \Thmref{thm:config-main}}

\begin{proof}{Proof of \Thmref{thm:config-main}.}
For each stage \(k\le K-1\), let \((x^{(k)},y^{(k)},z^{(k)})\) be the optimizer of the stage program and let
\(\lambda^{(k)},\mu^{(k)},\psi^{(k)},\phi^{(k)}\) be the multipliers supplied by \Lemref{lem:config-kkt-main}. For the
last stage \(K\), let \((\bar\lambda,\bar\theta,\bar\mu,\bar\pi)\) be an optimal extreme-point dual solution of the
residual LP, chosen as in \Lemref{lem:config-last-stage}.

We define a candidate dual solution for the offline LP \Eqref{eq:offline-config-dual} as follows.
\begin{itemize}
    \item For every user \(t\in U_k\) with \(k\le K-1\), set
    \(\widehat\alpha_t := \lambda_t^{(k)}\).
    \item For every triple \((t,c,j)\) with \(t\in U_k\) and \(k\le K-1\), set
    \(\widehat\gamma_{t,c,j} := \mu_{t,c,j}^{(k)}\).
    \item For every advertiser \(j\), define the stage-\(k\) advertiser-dual increment
    \begin{equation}
        \Delta\widehat\beta_j^{(k)}
        := \sum_{t\in U_k}\sum_{c\in\mathcal C} x_{t,c,j}^{(k)}
           \sum_{\tau:\,\PriceLevel{\tau}\le w_{t,c,j}}
           \Delta w_\tau f_k(H_{j,\tau}^{(k)})
           - \sum_{\tau\in\PriceUniverse}
             \PriceLevel{\tau}\bigl(\eta_{j,\tau}^{(k-1)}-y_{j,\tau}^{(k)}\bigr)
        \qquad (k\le K-1).
        \label{eq:ec-config-beta-early}
    \end{equation}
\end{itemize}
At stage \(K\), set
\[
    \widehat\alpha_t := \bar\lambda_t,
    \qquad
    \widehat\gamma_{t,c,j} := \bar\mu_{t,c,j}
    \qquad (t\in U_K),
\]
and
\begin{equation}
    \Delta\widehat\beta_j^{(K)}
    := \bar\theta_j
       - \sum_{\tau\in\PriceUniverse}
         \eta_{j,\tau}^{(K-1)}\bigl(\PriceLevel{\tau}-\bar\pi_{j,\tau}\bigr).
    \label{eq:ec-config-beta-last}
\end{equation}
Finally define \(\widehat\beta_j := \sum_{k=1}^K \Delta\widehat\beta_j^{(k)}\).

\paragraph{Objective equality for stages \(k\le K-1\).}
Fix such a stage. By \Eqref{eq:ec-config-cs-lambda} and \Eqref{eq:ec-config-stationarity-z},
\begin{align*}
    \sum_{t\in U_k} \widehat\alpha_t
    &= \sum_{t\in U_k} \lambda_t^{(k)} \sum_{c\in\mathcal C} z_{t,c}^{(k)} \\
    &= \sum_{t\in U_k}\sum_{c\in\mathcal C}\sum_{j\in V}
       \xi_{t,c,j} z_{t,c}^{(k)} \mu_{t,c,j}^{(k)}
       - \sum_{t\in U_k}\sum_{c\in\mathcal C} \phi_{t,c}^{(k)} z_{t,c}^{(k)} \\
    &= \sum_{t\in U_k}\sum_{c\in\mathcal C}\sum_{j\in V}
       x_{t,c,j}^{(k)} \mu_{t,c,j}^{(k)},
\end{align*}
where the last equality uses \Eqref{eq:ec-config-cs-mu} and \Eqref{eq:ec-config-cs-psi-phi}. Now apply
\Eqref{eq:ec-config-stationarity-x} with \(\theta_j=0\) and \(\psi_{t,c,j}^{(k)}x_{t,c,j}^{(k)}=0\):
\[
    x_{t,c,j}^{(k)}\mu_{t,c,j}^{(k)}
    = x_{t,c,j}^{(k)}w_{t,c,j}
      - x_{t,c,j}^{(k)}
        \sum_{\tau:\,\PriceLevel{\tau}\le w_{t,c,j}}
        \Delta w_\tau f_k(H_{j,\tau}^{(k)}).
\]
Summing over all \((t,c,j)\) and then adding \Eqref{eq:ec-config-beta-early} gives
\[
    \sum_{t\in U_k}\widehat\alpha_t + \sum_{j\in V}\Delta\widehat\beta_j^{(k)}
    = \sum_{t\in U_k}\sum_{c\in\mathcal C}\sum_{j\in V} w_{t,c,j}x_{t,c,j}^{(k)}
      - \sum_{j\in V}\sum_{\tau\in\PriceUniverse}
        \PriceLevel{\tau}\bigl(\eta_{j,\tau}^{(k-1)}-y_{j,\tau}^{(k)}\bigr),
\]
which is exactly the stage-\(k\) contribution to the primal objective.

\paragraph{Objective equality at stage \(K\).}
The stage-\(K\) problem is linear. Its dual objective is
\[
    \sum_{t\in U_K}\bar\lambda_t + \sum_{j\in V}\bar\theta_j
    + \sum_{j\in V}\sum_{\tau\in\PriceUniverse} \eta_{j,\tau}^{(K-1)}\bar\pi_{j,\tau}.
\]
By strong duality, this equals the stage-\(K\) primal objective, and by \Eqref{eq:ec-config-beta-last}
\[
    \sum_{t\in U_K}\widehat\alpha_t + \sum_{j\in V}\Delta\widehat\beta_j^{(K)}
    = \sum_{t\in U_K}\bar\lambda_t + \sum_{j\in V}\bar\theta_j
      + \sum_{j\in V}\sum_{\tau\in\PriceUniverse}\eta_{j,\tau}^{(K-1)}\bar\pi_{j,\tau}
      - \sum_{j\in V}\sum_{\tau\in\PriceUniverse}\eta_{j,\tau}^{(K-1)}\PriceLevel{\tau},
\]
which is again exactly the stage-\(K\) primal contribution. Therefore
\[
    \sum_{t\in U}\widehat\alpha_t + \sum_{j\in V}\widehat\beta_j = \ALG.
\]

\paragraph{Nonnegativity of the advertiser dual increments.}
For \(k\le K-1\), reorder the summation in \Eqref{eq:ec-config-beta-early}:
\begin{align*}
    \Delta\widehat\beta_j^{(k)}
    &= \sum_{\tau\in\PriceUniverse}
       \Delta w_\tau
       \left(
           \sum_{\substack{t\in U_k,\,c\in\mathcal C:\ w_{t,c,j}\ge \PriceLevel{\tau}}} x_{t,c,j}^{(k)}
       \right)
       f_k(H_{j,\tau}^{(k)})
       - \sum_{\tau\in\PriceUniverse}
         \PriceLevel{\tau}\bigl(\eta_{j,\tau}^{(k-1)}-y_{j,\tau}^{(k)}\bigr) \\
    &= \sum_{\tau\in\PriceUniverse}
       \Delta w_\tau
       \Bigl(H_{j,\tau}^{(k)} - \sum_{\tau'\ge \tau} y_{j,\tau'}^{(k)}\Bigr)
       f_k(H_{j,\tau}^{(k)})
       - \sum_{\tau\in\PriceUniverse}
         \Delta w_\tau
         \sum_{\tau'\ge \tau}\bigl(\eta_{j,\tau'}^{(k-1)}-y_{j,\tau'}^{(k)}\bigr).
\end{align*}
By \Corref{cor:config-monotone}(ii),
\[
    \sum_{\tau'\ge \tau}\bigl(\eta_{j,\tau'}^{(k-1)}-y_{j,\tau'}^{(k)}\bigr)
    =
    \sum_{\tau'\ge \tau}\bigl(\eta_{j,\tau'}^{(k-1)}-y_{j,\tau'}^{(k)}\bigr)
    f_k(H_{j,\tau}^{(k)}).
\]
Substituting and simplifying,
\[
    \Delta\widehat\beta_j^{(k)}
    = \sum_{\tau\in\PriceUniverse}
      \Delta w_\tau
      \bigl(H_{j,\tau}^{(k)}-H_{j,\tau}^{(k-1)}\bigr)
      f_k(H_{j,\tau}^{(k)}) \ge 0,
\]
where the last inequality uses \Corref{cor:config-monotone}(i). For stage \(K\), dual feasibility implies
\(\PriceLevel{\tau}-\bar\pi_{j,\tau}\le \bar\theta_j\), and hence
\[
    \Delta\widehat\beta_j^{(K)}
    \ge \bar\theta_j - \bar\theta_j \sum_{\tau\in\PriceUniverse}\eta_{j,\tau}^{(K-1)} \ge 0.
\]
Thus every advertiser-dual increment is nonnegative.

\paragraph{Feasibility of the \(\alpha\)-constraints.}
For \(k\le K-1\), \Eqref{eq:ec-config-stationarity-z} gives
\[
    \widehat\alpha_t = \lambda_t^{(k)}
    = \sum_{j\in V}\xi_{t,c,j}\mu_{t,c,j}^{(k)} + \phi_{t,c}^{(k)}
    \ge \sum_{j\in V}\xi_{t,c,j}\widehat\gamma_{t,c,j}
    \qquad \forall c\in\mathcal C.
\]
For stage \(K\), the same inequality is one of the LP dual constraints. So the \(\alpha\)-constraints of
\Eqref{eq:offline-config-dual} are satisfied.

\paragraph{Approximate feasibility for stages \(k\le K-1\).}
Fix a triple \((t,c,j)\) with \(t\in U_k\), and let \(\tau(t,c,j)\) denote the price index of \(w_{t,c,j}\). Since
\(\psi_{t,c,j}^{(k)}\ge 0\), \Eqref{eq:ec-config-stationarity-x} gives
\[
    \widehat\gamma_{t,c,j}
    = \mu_{t,c,j}^{(k)}
    \ge w_{t,c,j}
       - \sum_{\tau:\,\PriceLevel{\tau}\le w_{t,c,j}}
         \Delta w_\tau f_k(H_{j,\tau}^{(k)}).
\]
Combining this with the expression for \(\Delta\widehat\beta_j^{(s)}\) derived above,
\begin{align*}
    \widehat\gamma_{t,c,j} + \sum_{s=1}^k \Delta\widehat\beta_j^{(s)}
    &\ge \sum_{\tau:\,\PriceLevel{\tau}\le w_{t,c,j}}
       \Delta w_\tau
       \Biggl[
           1 - f_k(H_{j,\tau}^{(k)})
           + \sum_{s=1}^k
             \bigl(H_{j,\tau}^{(s)}-H_{j,\tau}^{(s-1)}\bigr)
             f_s(H_{j,\tau}^{(s)})
       \Biggr].
\end{align*}
For each threshold \(\tau\) in the sum, apply \Lemref{lem:batch-recursion} to the nondecreasing sequence
\[
    0=H_{j,\tau}^{(0)} \le H_{j,\tau}^{(1)} \le \cdots \le H_{j,\tau}^{(k)}
\]
with
\(x = H_{j,\tau}^{(k)}-H_{j,\tau}^{(k-1)}\). This yields
\[
    1 - f_k(H_{j,\tau}^{(k)})
    + \sum_{s=1}^k \bigl(H_{j,\tau}^{(s)}-H_{j,\tau}^{(s-1)}\bigr)f_s(H_{j,\tau}^{(s)})
    \ge \GammaFunc{K}.
\]
Hence
\[
    \widehat\gamma_{t,c,j} + \sum_{s=1}^k \Delta\widehat\beta_j^{(s)}
    \ge \GammaFunc{K}
       \sum_{\tau:\,\PriceLevel{\tau}\le w_{t,c,j}} \Delta w_\tau
    = \GammaFunc{K} w_{t,c,j}.
\]
Since all later increments of \(\widehat\beta_j\) are nonnegative, the full approximate constraint holds.

\paragraph{Approximate feasibility at stage \(K\).}
Fix a triple \((t,c,j)\) with \(t\in U_K\), and let \(\tau^\dagger:=\tau(t,c,j)\). Also write
\(\bar\theta_j = \PriceLevel{\tau_j^\star}\) as in \Lemref{lem:config-last-stage}. We distinguish two cases.

\emph{Case 1: \(\tau^\dagger < \tau_j^\star\), i.e., \(w_{t,c,j}<\bar\theta_j\).}
By \Lemref{lem:config-last-stage},
\[
    \Delta\widehat\beta_j^{(K)}
    = \bar\theta_j
      - \sum_{\tau>\tau_j^\star} \eta_{j,\tau}^{(K-1)}\bar\theta_j
      - \sum_{\tau\le \tau_j^\star} \eta_{j,\tau}^{(K-1)}\PriceLevel{\tau}.
\]
Split the second sum at \(\tau^\dagger\) and use \(\PriceLevel{\tau}\le \bar\theta_j\) for
\(\tau^\dagger\le \tau\le \tau_j^\star\):
\[
    \Delta\widehat\beta_j^{(K)}
    \ge \bar\theta_j\Bigl(1-\sum_{\tau\ge \tau^\dagger}\eta_{j,\tau}^{(K-1)}\Bigr)
       - \sum_{\tau<\tau^\dagger}\eta_{j,\tau}^{(K-1)}\PriceLevel{\tau}.
\]
Since \(\bar\mu_{t,c,j}+\bar\theta_j\ge w_{t,c,j}\) and \(\bar\mu_{t,c,j}\ge 0\),
\begin{align*}
    \bar\mu_{t,c,j} + \Delta\widehat\beta_j^{(K)}
    &\ge \Bigl(1-\sum_{\tau\ge \tau^\dagger}\eta_{j,\tau}^{(K-1)}\Bigr)
       (\bar\mu_{t,c,j}+\bar\theta_j)
       - \sum_{\tau<\tau^\dagger}\eta_{j,\tau}^{(K-1)}\PriceLevel{\tau} \\
    &\ge \Bigl(1-\sum_{\tau\ge \tau^\dagger}\eta_{j,\tau}^{(K-1)}\Bigr)w_{t,c,j}
       - \sum_{\tau<\tau^\dagger}\eta_{j,\tau}^{(K-1)}\PriceLevel{\tau}.
\end{align*}
The right-hand side can be rewritten as
\[
    \sum_{\tau\le \tau^\dagger}
    \Delta w_\tau\bigl(1-H_{j,\tau}^{(K-1)}\bigr),
\]
which equals
\[
    \sum_{\tau\le \tau^\dagger}
    \Delta w_\tau\bigl(1-f_{K-1}(H_{j,\tau}^{(K-1)})\bigr)
\]
because \(f_{K-1}(x)=x\).

\emph{Case 2: \(\tau^\dagger \ge \tau_j^\star\), i.e., \(w_{t,c,j}\ge \bar\theta_j\).}
Again by \Lemref{lem:config-last-stage},
\[
    \Delta\widehat\beta_j^{(K)}
    = \bar\theta_j
      - \sum_{\tau>\tau_j^\star} \eta_{j,\tau}^{(K-1)}\bar\theta_j
      - \sum_{\tau\le \tau_j^\star} \eta_{j,\tau}^{(K-1)}\PriceLevel{\tau}.
\]
Now use \(\bar\theta_j\le \PriceLevel{\tau}\) for
\(\tau_j^\star<\tau\le \tau^\dagger\) to obtain
\[
    \Delta\widehat\beta_j^{(K)}
    \ge \bar\theta_j
      - \sum_{\tau>\tau^\dagger} \eta_{j,\tau}^{(K-1)}\bar\theta_j
      - \sum_{\tau<\tau^\dagger} \eta_{j,\tau}^{(K-1)}\PriceLevel{\tau}.
\]
Using again \(\bar\mu_{t,c,j}+\bar\theta_j\ge w_{t,c,j}\),
\begin{align*}
    \bar\mu_{t,c,j} + \Delta\widehat\beta_j^{(K)}
    &\ge \Bigl(1-\sum_{\tau\ge \tau^\dagger}\eta_{j,\tau}^{(K-1)}\Bigr)
       (\bar\mu_{t,c,j}+\bar\theta_j)
       - \sum_{\tau<\tau^\dagger}\eta_{j,\tau}^{(K-1)}\PriceLevel{\tau} \\
    &\ge \Bigl(1-\sum_{\tau\ge \tau^\dagger}\eta_{j,\tau}^{(K-1)}\Bigr)w_{t,c,j}
       - \sum_{\tau<\tau^\dagger}\eta_{j,\tau}^{(K-1)}\PriceLevel{\tau}.
\end{align*}
The same telescoping rewrite as in Case 1 shows that the right-hand side equals
\[
    \sum_{\tau\le \tau^\dagger}
    \Delta w_\tau\bigl(1-f_{K-1}(H_{j,\tau}^{(K-1)})\bigr).
\]
Thus, in both cases,
\[
    \widehat\gamma_{t,c,j} + \Delta\widehat\beta_j^{(K)}
    \ge \sum_{\tau\le \tau(t,c,j)}
       \Delta w_\tau\bigl(1-f_{K-1}(H_{j,\tau}^{(K-1)})\bigr).
\]
Adding the earlier advertiser-dual increments and applying the same threshold-by-threshold recursion as above,
we obtain
\[
    \widehat\gamma_{t,c,j} + \widehat\beta_j \ge \GammaFunc{K} w_{t,c,j}.
\]
This proves approximate dual feasibility for every triple \((t,c,j)\). By weak duality for
\Eqref{eq:offline-config-dual}, the constructed dual satisfies
\(\ALG\ge \GammaFunc{K}\OPT\).\qed
\end{proof}